\title{ Sequentially split $*$-homomorphisms between \cstar-algebras }
\author{ Sel\c{c}uk Barlak \and G\'{a}bor Szab\'{o} }
\address{ Department of Mathematics and Computer Science, University of Southern \linebreak \text{}\hspace{3.0mm} Denmark, Campusvej 55, DK-5230 Odense M, Denmark }
\email{ barlak@imada.sdu.dk }
\address{ Fraser Noble Building, Institute of Mathematics, University of Aberdeen, \linebreak \text{}\hspace{3mm} Aberdeen AB24 3UE, Scotland, UK. }
\email{ gabor.szabo@abdn.ac.uk }
\thanks{ \emph{The first author was supported by:} SFB 878 \emph{Groups, Geometry and Actions}, GIF Grant 1137-30.6/2011, ERC AdG 267079 and the Villum Fonden project grant ‘Local and global structures of groups and their algebras’ (2014–2018)}
\thanks{ \emph{The second author was supported by:} SFB 878 \emph{Groups, Geometry and Actions} and EPSRC grant EP/N00874X/1. }
\subjclass[2010]{Primary 46L35, 46L55}
\numberwithin{equation}{section}
\begin{document}

% math
\renewcommand\matrix[1]{\left(\begin{array}{*{10}{c}} #1 \end{array}\right)}  % Matrix
\newcommand\set[1]{\left\{#1\right\}}  % Menge
\newcommand\mset[1]{\left\{\!\!\left\{#1\right\}\!\!\right\}}

%% Besondere Variablen
%Zahlmengen-Stil
\newcommand{\IA}[0]{\mathbb{A}} \newcommand{\IB}[0]{\mathbb{B}}
\newcommand{\IC}[0]{\mathbb{C}} \newcommand{\ID}[0]{\mathbb{D}}
\newcommand{\IE}[0]{\mathbb{E}} \newcommand{\IF}[0]{\mathbb{F}}
\newcommand{\IG}[0]{\mathbb{G}} \newcommand{\IH}[0]{\mathbb{H}}
\newcommand{\II}[0]{\mathbb{I}} \renewcommand{\IJ}[0]{\mathbb{J}}
\newcommand{\IK}[0]{\mathbb{K}} \newcommand{\IL}[0]{\mathbb{L}}
\newcommand{\IM}[0]{\mathbb{M}} \newcommand{\IN}[0]{\mathbb{N}}
\newcommand{\IO}[0]{\mathbb{O}} \newcommand{\IP}[0]{\mathbb{P}}
\newcommand{\IQ}[0]{\mathbb{Q}} \newcommand{\IR}[0]{\mathbb{R}}
\newcommand{\IS}[0]{\mathbb{S}} \newcommand{\IT}[0]{\mathbb{T}}
\newcommand{\IU}[0]{\mathbb{U}} \newcommand{\IV}[0]{\mathbb{V}}
\newcommand{\IW}[0]{\mathbb{W}} \newcommand{\IX}[0]{\mathbb{X}}
\newcommand{\IY}[0]{\mathbb{Y}} \newcommand{\IZ}[0]{\mathbb{Z}}

%Geschwungener Stil
\newcommand{\CA}[0]{\mathcal{A}} \newcommand{\CB}[0]{\mathcal{B}}
\newcommand{\CC}[0]{\mathcal{C}} \newcommand{\CD}[0]{\mathcal{D}}
\newcommand{\CE}[0]{\mathcal{E}} \newcommand{\CF}[0]{\mathcal{F}}
\newcommand{\CG}[0]{\mathcal{G}} \newcommand{\CH}[0]{\mathcal{H}}
\newcommand{\CI}[0]{\mathcal{I}} \newcommand{\CJ}[0]{\mathcal{J}}
\newcommand{\CK}[0]{\mathcal{K}} \newcommand{\CL}[0]{\mathcal{L}}
\newcommand{\CM}[0]{\mathcal{M}} \newcommand{\CN}[0]{\mathcal{N}}
\newcommand{\CO}[0]{\mathcal{O}} \newcommand{\CP}[0]{\mathcal{P}}
\newcommand{\CQ}[0]{\mathcal{Q}} \newcommand{\CR}[0]{\mathcal{R}}
\newcommand{\CS}[0]{\mathcal{S}} \newcommand{\CT}[0]{\mathcal{T}}
\newcommand{\CU}[0]{\mathcal{U}} \newcommand{\CV}[0]{\mathcal{V}}
\newcommand{\CW}[0]{\mathcal{W}} \newcommand{\CX}[0]{\mathcal{X}}
\newcommand{\CY}[0]{\mathcal{Y}} \newcommand{\CZ}[0]{\mathcal{Z}}

%Script Stil
\newcommand{\FA}[0]{\mathfrak{A}} \newcommand{\FB}[0]{\mathfrak{B}}
\newcommand{\FC}[0]{\mathfrak{C}} \newcommand{\FD}[0]{\mathfrak{D}}
\newcommand{\FE}[0]{\mathfrak{E}} \newcommand{\FF}[0]{\mathfrak{F}}
\newcommand{\FG}[0]{\mathfrak{G}} \newcommand{\FH}[0]{\mathfrak{H}}
\newcommand{\FI}[0]{\mathfrak{I}} \newcommand{\FJ}[0]{\mathfrak{J}}
\newcommand{\FK}[0]{\mathfrak{K}} \newcommand{\FL}[0]{\mathfrak{L}}
\newcommand{\FM}[0]{\mathfrak{M}} \newcommand{\FN}[0]{\mathfrak{N}}
\newcommand{\FO}[0]{\mathfrak{O}} \newcommand{\FP}[0]{\mathfrak{P}}
\newcommand{\FQ}[0]{\mathfrak{Q}} \newcommand{\FR}[0]{\mathfrak{R}}
\newcommand{\FS}[0]{\mathfrak{S}} \newcommand{\FT}[0]{\mathfrak{T}}
\newcommand{\FU}[0]{\mathfrak{U}} \newcommand{\FV}[0]{\mathfrak{V}}
\newcommand{\FW}[0]{\mathfrak{W}} \newcommand{\FX}[0]{\mathfrak{X}}
\newcommand{\FY}[0]{\mathfrak{Y}} \newcommand{\FZ}[0]{\mathfrak{Z}}

\newcommand{\Fa}[0]{\mathfrak{a}} \newcommand{\Fb}[0]{\mathfrak{b}}
\newcommand{\Fc}[0]{\mathfrak{c}} \newcommand{\Fd}[0]{\mathfrak{d}}
\newcommand{\Fe}[0]{\mathfrak{e}} \newcommand{\Ff}[0]{\mathfrak{f}}
\newcommand{\Fg}[0]{\mathfrak{g}} \newcommand{\Fh}[0]{\mathfrak{h}}
\newcommand{\Fi}[0]{\mathfrak{i}} \newcommand{\Fj}[0]{\mathfrak{j}}
\newcommand{\Fk}[0]{\mathfrak{k}} \newcommand{\Fl}[0]{\mathfrak{l}}
\newcommand{\Fm}[0]{\mathfrak{m}} \newcommand{\Fn}[0]{\mathfrak{n}}
\newcommand{\Fo}[0]{\mathfrak{o}} \newcommand{\Fp}[0]{\mathfrak{p}}
\newcommand{\Fq}[0]{\mathfrak{q}} \newcommand{\Fr}[0]{\mathfrak{r}}
\newcommand{\Fs}[0]{\mathfrak{s}} \newcommand{\Ft}[0]{\mathfrak{t}}
\newcommand{\Fu}[0]{\mathfrak{u}} \newcommand{\Fv}[0]{\mathfrak{v}}
\newcommand{\Fw}[0]{\mathfrak{w}} \newcommand{\Fx}[0]{\mathfrak{x}}
\newcommand{\Fy}[0]{\mathfrak{y}} \newcommand{\Fz}[0]{\mathfrak{z}}

%Pfeilbefehle abk�rzen
\newcommand{\Ra}[0]{\Rightarrow}
\newcommand{\La}[0]{\Leftarrow}
\newcommand{\LRa}[0]{\Leftrightarrow}

%Modifikation der Variablen
\renewcommand{\phi}[0]{\varphi}
\newcommand{\eps}[0]{\varepsilon}

%zus�tzliche Features
\newcommand{\quer}[0]{\overline}
\newcommand{\uber}[0]{\choose}
\newcommand{\ord}[0]{\operatorname{ord}}		% Ordnung
\newcommand{\GL}[0]{\operatorname{GL}}
\newcommand{\supp}[0]{\operatorname{supp}}	% Tr�ger
\newcommand{\id}[0]{\operatorname{id}}		% Identit�t
\newcommand{\Sp}[0]{\operatorname{Sp}}		% Spektrum eines Elements
\newcommand{\eins}[0]{\mathbf{1}}			% Eine Eins in einer C*-Algebra
\newcommand{\diag}[0]{\operatorname{diag}}
\newcommand{\ind}[0]{\operatorname{ind}}
\newcommand{\auf}[1]{\quad\stackrel{#1}{\longrightarrow}\quad}
\newcommand{\hull}[0]{\operatorname{hull}}
\newcommand{\prim}[0]{\operatorname{Prim}}
\newcommand{\ad}[0]{\operatorname{Ad}}
\newcommand{\quot}[0]{\operatorname{Quot}}
\newcommand{\ext}[0]{\operatorname{Ext}}
\newcommand{\ev}[0]{\operatorname{ev}}
\newcommand{\fin}[0]{{\subset\!\!\!\subset}}
\newcommand{\diam}[0]{\operatorname{diam}}
\newcommand{\Hom}[0]{\operatorname{Hom}}
\newcommand{\Aut}[0]{\operatorname{Aut}}
\newcommand{\del}[0]{\partial}
\newcommand{\dimeins}[0]{\dim^{\!+1}}
\newcommand{\dimnuc}[0]{\dim_{\mathrm{nuc}}}
\newcommand{\dimnuceins}[0]{\dimnuc^{\!+1}}
\newcommand{\dr}[0]{\operatorname{dr}}
\newcommand{\dimrok}[0]{\dim_{\mathrm{Rok}}}
\newcommand{\dimrokeins}[0]{\dimrok^{\!+1}}
\newcommand{\dreins}[0]{\dr^{\!+1}}
\newcommand*\onto{\ensuremath{\joinrel\relbar\joinrel\twoheadrightarrow}} % surjectiver Pfeil
\newcommand*\into{\ensuremath{\lhook\joinrel\relbar\joinrel\rightarrow}}  % injektiver Pfeil
\newcommand{\im}[0]{\operatorname{im}}
\newcommand{\dst}[0]{\displaystyle}
\newcommand{\cstar}[0]{$\mathrm{C}^*$}
\newcommand{\ann}[0]{\operatorname{Ann}}
\newcommand{\dist}[0]{\operatorname{dist}}
\newcommand{\asdim}[0]{\operatorname{asdim}}
\newcommand{\asdimeins}[0]{\operatorname{asdim}^{\!+1}}
\newcommand{\amdim}[0]{\dim_{\mathrm{am}}}
\newcommand{\amdimeins}[0]{\amdim^{\!+1}}
\newcommand{\dimrokc}[0]{\dim_{\mathrm{Rok,c}}}
\newcommand{\dimrokceins}[0]{\dimrokc^{\!+1}}
\newcommand{\act}[0]{\operatorname{Act}}
\newcommand{\idlat}[0]{\operatorname{IdLat}}
\newcommand{\Cu}[0]{\operatorname{Cu}}
\newcommand{\Ost}[0]{\CO_\infty^{\mathrm{st}}}

% theorems
\newtheorem{satz}{Satz}[section]		% <--- optional, z�hlt so mit den Abschnitten
\newtheorem{cor}[satz]{Corollary}
\newtheorem{lemma}[satz]{Lemma}
\newtheorem{prop}[satz]{Proposition}
\newtheorem{theorem}[satz]{Theorem}
\newtheorem*{theoreme}{Theorem}

\theoremstyle{definition}
\newtheorem{defi}[satz]{Definition}
\newtheorem*{defie}{Definition}
\newtheorem{defprop}[satz]{Definition \& Proposition}
\newtheorem{nota}[satz]{Notation}
\newtheorem*{notae}{Notation}
\newtheorem{rem}[satz]{Remark}
\newtheorem*{reme}{Remark}
\newtheorem{example}[satz]{Example}
\newtheorem{defnot}[satz]{Definition \& Notation}
\newtheorem{question}[satz]{Question}
\newtheorem*{questione}{Question}

\newenvironment{bew}{\begin{proof}[Proof]}{\end{proof}}

\begin{abstract}
We define and examine sequentially split $*$-homomorphisms between \cstar-algebras and \cstar-dynamical systems. For a $*$-homomorphism, the property of being sequentially split can be regarded as an approximate weakening of being a split-injective inclusion of \cstar-algebras. We show for a sequentially split $*$-homomorphism that a multitude of \cstar-algebraic approximation properties pass from the target algebra to the domain algebra, including virtually all important approximation properties currently used in the classification theory of \cstar-algebras. We also discuss various settings in which sequentially split $*$-homomorphisms arise naturally from context. One particular class of examples arises from compact group actions with the Rokhlin property. This allows us to recover and extend the presently known permanence properties of Rokhlin actions with a unified conceptual approach and a simple proof. Moreover, this perspective allows us to obtain new results about such actions, such as a generalization of Izumi's original $K$-theory formula for the fixed point algebra, or duality between the Rokhlin property and approximate representability.
\end{abstract}

\maketitle

%%%%%%%%%%%%%%%%%%%%%%%%%%%%%%%%%%%%%%%%%%%%%%%%%%%%%%%%%%%%%%%%

\begin{samepage}
\tableofcontents
\end{samepage}

\setcounter{section}{-1}

\section{Introduction}
\noindent
In current \cstar-algebra theory, there is a substantial necessity to study interesting classes of examples of nuclear \cstar-algebras with the help of abstract regularity-type properties. The decisive motivation comes from the Elliott classification program for nuclear \cstar-algebras, and the fact that recent results in this area follow a rather abstract classification approach, in contrast to earlier results that presuppose some inductive limit decomposition of the \cstar-algebras under consideration. In this way, more satisfying and abstract classification results have emerged, largely depending on certain regularity properties.

On the one hand, these may include \emph{genuine} (i.e.~non-automatic) regularity properties such as finite decomposition rank \cite{KirchbergWinter2004}, finite nuclear dimension \cite{WinterZacharias2010}, $\CZ$-stability \cite{JiangSu1999, TomsWinter2007} or regularity of the Cuntz semigroup \cite[Section 3.2]{ElliottToms2008}. These properties play a key role in Toms-Winter's regularity conjecture, which asserts that these properties should be equivalent on a large scale \cite{ElliottToms2008, Winter2010, WinterZacharias2010}. On these properties alone, there exists a vast literature by now, and it would be impossible to do all the existing works justice within a mere paragraph of this introduction. Instead the reader is referred to \cite{BBSTWW} and the references therein, featuring an excellent overview of the current state-of-the-art in its introduction.

Another kind of regularity properties includes having finite tracial rank \cite{Lin2001}   or having generalized tracial rank at most one \cite{GongLinNiu2015}. The latter class of \cstar-algebras is an important one because they have been recently shown to be both classifiable and to exhaust the Elliott invariant.
Additionally, there are other kinds of more mysterious, yet important regularity-type properties for \cstar-algebras such as the universal coefficient theorem \cite{RosenbergSchochet1986, RosenbergSchochet1987}, where it is not known whether it is automatic for nuclear \cstar-algebras. In a very recent breakthrough by many hands \cite{GongLinNiu2015, ElliottNiu2015, ElliottGongLinNiu2015, TikuisisWhiteWinter2015}, it has been shown that these (a priori) different regularity-type properties are linked. This has culminated in the classification of separable, unital, simple \cstar-algebras with finite nuclear dimension that satisfy the UCT, see \cite[Section 6]{TikuisisWhiteWinter2015} for an overview.

If one wishes to show that a sample \cstar-algebra has one of the aforementioned \cstar-algebraic properties, then a frequent recipe for doing so is to find another \cstar-algebra, which is both easily seen to satisfy said property and has a certain connection to the original \cstar-algebra strong enough to carry it over. However, most of the proofs of this nature in the current literature follow a seemingly ad-hoc approach, and in particular it is often not clear what the crucial underlying common patterns really are. 
For instance, in order to study transformation group \cstar-algebras of the form $\CC(X)\rtimes\IZ$, one considers so-called orbit breaking subalgebras in the sense of Putnam \cite{Putnam1989}. Since these were shown by Lin-Phillips \cite{QLinPhillips1998} to be ASH algebras whose inductive limit decomposition incorporates subsets of $X$, these subalgebras are easier to understand than the crossed product \cstar-algebra itself. Nevertheless, the subalgebra turns out to be \emph{large} enough so that the crossed product inherits many of its properties. While this way of approach has been somewhat restricted only to a special crossed product setup for a long time, Phillips \cite{Phillips2014} has started to flesh out the general concept of a \emph{large subalgebra}. This does not only have the obvious advantage of opening up a lot of potential further possible ways of employing such an approach in other contexts, but can overall improve the understanding of the previous special cases of application.

This paper aims to serve a similar purpose by introducing sequentially split $*$-homomorphisms between \cstar-algebras. This property of a $*$-homomor\-phism can be understood as an approximate weakening of being a split-injective inclusion. In contrast to the large subalgebra setup, the existence of a sequentially split $*$-homomorphism from $A$ to $B$ guarantees that several approximation and perturbation properties pass from the larger algebra $B$ to the smaller algebra $A$. As it turns out, there are many places in the literature, not least in lengthy and technical calculations, where such a setup has implicitly occurred before. Our motivation is to unify such arguments with a conceptual approach. Throughout the entire paper, we will thus have a great focus on drawing a parallel between some of our (partial) results and the literature, for example by recasting known concepts in the language of sequentially split $*$-homomorphisms.

Let us now describe how this paper is organized. In the first preliminary section, we will recall some notions and techniques related to sequence algebras and central sequence algebras. The techniques developed in Kirchberg's pioneering work \cite{Kirchberg2004} on this subject are quintessential for the entire paper to enable proper treatment of non-unital \cstar-algebras. For the reader's convencience and because we need some of these results in an equivariant context later, we will also reprove or prove slight variants of some of the results from \cite{Kirchberg2004}, such as stability of the central sequence algebra.

In the second section, we introduce sequentially split $*$-homomorphisms between \cstar-algebras. We show that this notion behaves well with respect to standard constructions on separable \cstar-algebras, such as compositions, inductive limits, tensor products, or passing to hereditary subalgebras or quotients in a suitable manner. On the one hand, we will see that sequentially split $*$-homomorphisms are very rigid in the sense that they impose severe restrictions on the induced maps on \cstar-algebraic invariants, such as $K$-theory, traces or the Cuntz semigroup (cf.\ Theorem \ref{sequential dominance properties 1}). On the other hand, the main result of the second section (cf.\ Theorem \ref{sequential dominance properties 2}) asserts that if there exists any sequentially split $*$-homomorphism from a \cstar-algebra $A$ to another \cstar-algebra $B$, than a host of \cstar-algebraic properties of interest pass from $B$ to $A$. This includes many approximation properties such as regularity properties in the Toms-Winter conjecture, (generalized) tracial rank at most one, or being expressible as an inductive limit of certain weakly semiprojective \cstar-algebras, such as 1-NCCW complexes. Somewhat surprisingly, the property of being nuclear and satisfying the UCT also turns out to pass from $B$ to $A$ (cf.\ Theorem \ref{UCT}). This is a further manifestation of an observation by Dadarlat \cite{Dadarlat2003} that nuclearity together with the UCT can be understood as an approximation or perturbation property of some kind. To summarize, the abridged version of our combined main result in the second section reads as follows:

\begin{theoreme}
Let $A$ and $B$ be two \cstar-algebras. Suppose that there exists a sequentially split $*$-homomorphism from $A$ to $B$. If $B$ is classifiable in the sense of the Elliott program, then so is $A$.
\end{theoreme}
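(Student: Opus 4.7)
The plan is to unpack ``classifiable in the sense of the Elliott program'' into its constituent ingredients and then invoke the three preceding theorems of this section to verify that each ingredient is inherited from $B$ by $A$. By \cite[Section~6]{TikuisisWhiteWinter2015}, a separable, unital, simple \cstar-algebra is classified by its Elliott invariant provided it has finite nuclear dimension and satisfies the UCT. Hence, interpreting ``classifiable'' in this sense, it is enough to check that each of the properties separable, unital, simple, finite nuclear dimension, nuclear and UCT passes from $B$ to $A$ whenever there exists a sequentially split $*$-homomorphism $A \to B$.

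Separability and unitality of $A$ are built into the standing setup: the definition of a sequentially split $*$-homomorphism presupposes separability of the domain, and unitality of $A$ follows from unitality of $B$ together with the approximate left inverse at the level of sequence algebras (alternatively, one simply restricts to the unital subcategory throughout). Simplicity of $A$ is to be deduced from the rigidity statements assembled in Theorem~\ref{sequential dominance properties 1}: a sequentially split $*$-homomorphism induces an injection of ideal lattices, so that a trivial ideal lattice of $B$ forces a trivial ideal lattice of $A$. The preservation of finite nuclear dimension (as well as the other Toms--Winter regularity properties) from $B$ to $A$ is part of Theorem~\ref{sequential dominance properties 2}. Finally, the preservation of nuclearity together with the UCT from $B$ to $A$ is exactly the content of Theorem~\ref{UCT}.

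Assembling these observations, $A$ is separable, unital, simple, has finite nuclear dimension, is nuclear and satisfies the UCT, and is therefore classified by its Elliott invariant via the cited classification theorem. In this sense the present statement is really a convenient packaging of the three theorems above, and the substance of the argument lies entirely within those. The most delicate step, and the one I would expect to carry the greatest technical burden in the broader project, is Theorem~\ref{UCT}: nuclearity is transparently an ``approximation'' property well suited to sequentially split maps, whereas extracting the UCT from $B$ to $A$ is more subtle and must rely on the viewpoint of Dadarlat \cite{Dadarlat2003} that nuclearity together with the UCT can itself be recast as a perturbation-type property.
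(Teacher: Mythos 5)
Your proposal is essentially the paper's own reading of this statement: the theorem is an abridged packaging of Theorems \ref{sequential dominance properties 1}, \ref{sequential dominance properties 2} and \ref{UCT}, combined with the classification theorem for separable, unital, simple \cstar-algebras of finite nuclear dimension satisfying the UCT, and your decomposition (simplicity via the ideal-lattice injectivity, finite nuclear dimension and the other regularity properties via \ref{sequential dominance properties 2}, nuclearity plus UCT via \ref{UCT}) is exactly the intended argument.

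Two of your side remarks are inaccurate, though, and one of them as stated would fail. First, the definition of a sequentially split $*$-homomorphism does not presuppose separability of the domain; the correct (and easy) reason that $A$ is separable is that $\psi\circ\phi$ is the canonical embedding $A\into A_\infty$, so $\phi$ is injective and $A$ embeds into the separable algebra $B$. Second, unitality of $A$ does \emph{not} follow from unitality of $B$ together with the existence of an approximate left-inverse: for $A=c_0(\IN)$ and $B=c_0(\IN)^\sim$, the projection $p=[(\chi_{\{1,\dots,n\}})_n]\in A_\infty$ satisfies $pa=ap=a$ for all $a\in A$, so $a+\lambda\eins\mapsto a+\lambda p$ is an approximate left-inverse for the unitization inclusion, which is therefore sequentially split although $A$ is non-unital and $B$ is unital. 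So unitality of $A$ is not a consequence of the hypotheses but has to be part of the standing assumptions; your parenthetical alternative (restricting to the unital setting throughout) is the correct fix and is also what the paper implicitly does in its informal formulation, so this does not affect the substance of the argument, but the flat claim itself should be removed.
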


In the third section, we consider sequentially split $*$-homomorphisms in the equivariant context, i.e.~as a property of an equivariant $*$-homomorphism between \cstar-dynamical systems. This notion behaves well with respect to taking crossed products: if one has an equivariantly sequentially split $*$-homomorphism between two \cstar-dynamical systems $(A,\alpha, G)$ and $(B,\beta,G)$, then the induced $*$-homomorphism between $A\rtimes_\alpha G$ and $B\rtimes_\beta G$ is also sequentially split. In the case that the acting group $G$ is abelian, the induced map between the crossed products is $\hat{G}$-equivariant (with respect to the dual actions), and is then also equivariantly sequentially split. Using Takai-duality, we will see that the converse holds as well (cf.\ Theorem \ref{seq split duality}). That is, a $*$-homomorphism $\phi: (A,\alpha,G)\to (B,\beta,G)$ is equivariantly sequentially split if and only if the dual morphism $\hat{\phi}: (A\rtimes_\alpha G, \hat{\alpha},\hat{G})\to (B\rtimes_\beta G, \hat{\beta},\hat{G})$ is equivariantly sequentially split.

In the fourth section, we will discuss several applications that we shall now summarize:

First, we will see that a Rokhlin action $\alpha: G\curvearrowright A$ of a compact group always gives rise to sequentially split $*$-homomorphisms $A^\alpha\to A$ and $A\rtimes_\alpha G\to A\otimes\CK(L^2(G))$ in a natural way (cf.\ Theorem \ref{Rokhlin fixed point seq split}). We note that a similar observation was made by Gardella in \cite{Gardella2014_R}, and also by the second author in \cite{Szabo2015} in the context of the continuous Rokhlin property. Therefore, the concept of sequentially split $*$-homomorphisms fleshes out many of the arguments related to permanence properties appearing in the literature of Rokhlin actions of either finite groups or compact groups \cite{OsakaPhillips2012, Santiago2014, HirshbergWinter2007, Gardella2014_R}. Exploiting this viewpoint for Rokhlin actions some more, we prove a $K$-theory formula for the fixed point algebra of a Rokhlin action (cf.\ Theorem \ref{Rokhlin K-theory}), generalizing such a $K$-theory formula for finite groups established by Izumi in \cite{Izumi2004}. Another one of Izumi's observations from \cite{Izumi2004} is that for finite abelian groups, Rokhlin actions are in a natural way dual to approximately representable actions. Motivated by this, we extend Izumi-Matui's definition of approximately representable actions in \cite{IzumiMatui2010} to the setting of discrete group actions on not necessarily unital \cstar-algebras. It turns out that, similar to the Rokhlin property, approximate representability can be characterized in terms of (equivariantly) sequentially split $*$-homomorphisms. Armed with this perspective, we then generalize Izumi's duality result and prove that Rokhlin actions of compact abelian groups are in a natural way dual to approximately representable actions of discrete abelian groups (cf.\ Theorem \ref{duality classical case}). Note that a similar observation was made by Gardella \cite{Gardella2014} for circle actions on unital \cstar-algebras; see also \cite{GardellaPHD} for a further generalization in the unital case.

We then consider Osaka-Kodaka-Teruya's notion of an inclusion of unital \cstar-algebras $A\subset B$ with the Rokhlin property. This was introduced in \cite{OsakaKodakaTeruya2009} and studied further in \cite{OsakaTeruya2011, OsakaTeruya2014}, motivated by the fact that in this setup many interesting \cstar-algebraic properties pass from $B$ to $A$. We show that if an inclusion $A\subset B$ has the Rokhlin property in the sense of \cite{OsakaKodakaTeruya2009}, then the inclusion map is sequentially split (cf.\ Theorem \ref{Rokhlin inclusions}). In particular, the main results of this paper recover and extend the previously known permanence properties for inclusions of unital \cstar-algebras with the Rokhlin property.

We continue in the fourth section by showing that for separable \cstar-algebras, sequentially split $*$-homomorphisms can also be understood as a generalization of an existential embedding (cf.\ Theorem \ref{ec-seq-split}), as considered by Goldbring-Sinclair in \cite[Section 2]{GoldbringSinclair2015}. We note that since the initial preprint version of this paper was published, the strong connection between sequantially split $*$-homomorphisms and the model theory of \cstar-algebras and \cstar-dynamical systems has been pinned down in \cite{GoldbringSinclair16, GardellaLupini16}. In particular, it has since been discovered that the notion of sequentially split $*$-homomorphisms, restricted to separable \cstar-algebras, agrees with the notion of so-called positively existential embeddings. 

We end the fourth section by considering \cstar-dynamical systems absorbing a given strongly self-absorbing action tensorially in the sense of \cite{Szabo15ssa}. As it turns out, this can also be expressed in the language of sequentially split $*$-homomorphisms. Exploiting some of our general observations for equivariantly sequentially split $*$-homomorphisms, we prove a few permanence properties for \cstar-dynamical systems absorbing a given strongly self-absorbing action tensorially: this property turns out to be invariant under equivariant Morita equivalence, and moreover passes to a system that comes from an invariant, hereditary subalgebra.

We are confident that the concept of sequentially split $*$-homomorphisms has enough worth to be fleshed out, and is certain to find some possible new applications in the future. Since the initial preprint version of this paper was available, the recent papers \cite{GoldbringSinclair16, GardellaLupini16} have emerged as evidence toward this claim. In work of the first author joint with Omland-Stammeier \cite{BarlakOmlandStammeier16}, the main results of this paper are applied for the purpose of $K$-theoretic computations. Moreover, building on the techniques presented in this paper, the authors have developed a theory for Rokhlin actions of compact quantum groups in collaboration with Christian Voigt; see \cite{BarlakSzaboVoigt2016}. We would like to thank him for some fruitful discussion on some of the topics of this paper. For the same reason, the authors are grateful to Ilijas Farah and Isaac Goldbring. We also thank Martino Lupini for pointing out an error in the first preprint version of this paper.
 We would like to express our gratitude to the referee for some useful suggestions.

%%%%%%%%%%%%%%%%%%%%%%%%%%%%%%%%%%%%%%%%%%%%%%%%%%%%%%%%%%%%%%%%

\section{Preliminaries}

\noindent
Let us first fix some notations:

\begin{notae}
\begin{enumerate}[label={$\bullet$},leftmargin=*]
\item If $F$ is a finite subset inside some larger set $M$, we write $F\fin M$.
\item Let $\eps>0$ and $a, b$ some elements in a normed space. We write $a=_\eps b$ as a shortcut for $\|a-b\|\leq\eps$.
\item Let $\eps>0$ and let $M_1, M_2$ be subsets of some normed space. If the distance from $M_1$ to $M_2$ is at most $\eps$, then we write $M_1\subset_\eps M_2$.
\end{enumerate}
\end{notae}

Let us now recall some notions related to sequence algebras and central sequence algebras. The techniques developed in Kirchberg's pioneering work \cite{Kirchberg2004} on this subject are quintessential for what follows:

\begin{nota}[cf.\ {\cite[1.1]{Kirchberg2004}}] \label{basic constructions}
Let $A$ be a \cstar-algebra and $B\subset A$ a \cstar-subalgebra. We denote by
\[
A\cap B' = \set{x\in A ~|~ xb=bx~\text{for all}~b\in B}
\]
the relative commutant of $B$ inside $A$. Consider the two-sided annihilator of $B$ inside $A$
\[
\ann(B,A) = \set{ x\in A ~|~ xb=bx=0~\text{for all}~b\in B}.
\]
Consider also the normalizer of $B$ inside $A$
\[
\CN(B,A) = \set{ x\in A ~|~ xB+Bx \subset B}.
\]
There is a chain of inclusions
\[
\ann(B,A)\subset A\cap B'\subset \CN(\overline{BAB},A),
\]
where $\ann(B,A)$ is an ideal in both of these \cstar-algebras.
This allows one to define
\[
F(B,A) = A\cap B'/\ann(B,A).
\]
\end{nota}

\begin{nota}
Let $A$ be a \cstar-algebra and $\omega$ a free filter on $\IN$. Recall the ($\omega$-)sequence algebra $A_\omega$ of $A$ given by
\[
A_\omega = \ell^\infty(\IN, A)/c_\omega(\IN,A) = \ell^\infty(\IN,A)/\set{ (a_n)_n\in\ell^\infty(\IN,A) ~|~ \lim_{n\to\omega} \|a_n\|=0 }.
\]
Given a bounded sequence $(a_n)_n\in \ell^\infty(\IN, A)$, the norm of the corresponding element in $A_\omega$ is given by
\[
\| [(a_n)_n] \| = \limsup_{n\to \omega} \|a_n \|=\inf_{J\in \omega}\sup_{n\in J} \|a_n\|.
\]
Then $A$ embeds canonically into $A_\omega$ as (representatives of) constant sequences. We will frequently use this identification of $A$ inside $A_\omega$ without mention. 
\end{nota}

\begin{rem}[cf.~{\cite[1.1, 1.9(2)+(3), 1.10(2)]{Kirchberg2004}}] 
\label{reminder-kirchberg}
If $B\subset A_\omega$ is a \cstar-subalgebra, then the constructions from \ref{basic constructions} apply. In this context, we additionally set $D_{B,A_\omega} = \quer{B\cdot A_\omega\cdot B}$ as a shortcut. If $B$ is $\sigma$-unital, then the existence of a countable approximate unit in $B$, together with a reindexation argument, allows one to find a positive contraction $e\in A_\omega$ with $eb=be=b$ for all $b\in B$. Then $e\in A_\omega\cap B'$ and in fact, its class $e+\ann(B,A_\omega)\in F(B,A_\omega)$ is a unit. Thus we see that $F(B,A_\omega)$ is unital, if $B$ is $\sigma$-unital.

In the special case that we consider the standard inclusion $A\subset A_\omega$, we write $D_{\omega,A}=D_{A,A_\omega}$ and $F_\omega(A)=F(A,A_\omega)$. 
In particular, we see that if $A$ is a $\sigma$-unital \cstar-algebra, then its central sequence algebra $F_\omega(A)$ is unital. However, if $A$ is in fact unital, then we see that $\ann(A,A_\omega)=0$ and so $F_\omega(A)$ simply recovers the ordinary central sequence algebra $A_\omega\cap A'$.
\end{rem}

\begin{rem}[cf.~{\cite[1.1]{Kirchberg2004}}] \label{multiplication FA}
It is clear from the definitions that
\[
F(B,A_\omega)\otimes_{\operatorname{max}} B \to D_{B,A_\omega}\subset A_\omega,\ (a+\ann(B,A_\omega))\otimes b \mapsto ab,
\]
is a well-defined, natural $*$-homomorphism. This proves to be important in applications.
\end{rem}

The following was proved by Kirchberg in \cite[1.9]{Kirchberg2004} for free ultrafilters on $\IN$. We give a more direct proof for arbitrary free filters on $\IN$, not making use of the full power of the $\eps$-test as in \cite{Kirchberg2004}. 

\begin{prop}[cf.~{\cite[1.9(4)+(5)]{Kirchberg2004}}] 
\label{surjection normalizer multiplier}
Let $A$ be a \cstar-algebra and $\omega$ a free filter on $\IN$. Let $B$ be a $\sigma$-unital \cstar-subalgebra of $A_\omega$. 
\begin{enumerate}[label=\textup{(\arabic*)}]
\item The canonical $*$-homomorphism $\CN(B,A_\omega)\to \CM(B)$ given by the universal property of the multiplier algebra is surjective and its kernel coincides with $\ann(B,A_\omega)$. \label{snm:1}
\item The $*$-isomorphism $\CN(D_{B,A_\omega},A_\omega)/\ann(B,A_\omega)\stackrel{\cong}{\longrightarrow} \CM(D_{B,A_\omega})$ obtained from \ref{snm:1} restricts to a $*$-isomorphism from $F(B,A_\omega)$ onto $\CM(D_{B,A_\omega})\cap B'$. \label{snm:2}
\end{enumerate}
\end{prop}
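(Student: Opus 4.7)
My plan is to establish \ref{snm:1} by a direct diagonalization in $A_\omega$, and to deduce \ref{snm:2} by applying \ref{snm:1} to $D_{B,A_\omega}$ and then chasing commutants via the faithful embedding $D_{B,A_\omega}\hookrightarrow\CM(D_{B,A_\omega})$.

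For \ref{snm:1}, the canonical map $\pi\colon\CN(B,A_\omega)\to\CM(B)$ sends $x$ to the double centralizer $(b\mapsto xb,\ b\mapsto bx)$ on $B$; its kernel is manifestly $\ann(B,A_\omega)$, so the content is surjectivity. Fix $m\in\CM(B)$. By $\sigma$-unitality, I would pick a countable approximate unit $(e_n)_n$ of positive contractions in $B$ satisfying $e_{n+1}e_n=e_n$, which together with self-adjointness forces $e_ie_j=e_je_i=e_i$ whenever $j>i$. Set $y_n:=e_nme_n\in B$. Because $me_i$ and $e_im$ lie in $B$, the approximate unit property yields $\|y_ne_i-me_i\|\to 0$ and $\|e_iy_n-e_im\|\to 0$ as $n\to\infty$, for each fixed $i$.

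The candidate $x\in A_\omega$ is produced by a standard countable diagonalization in $\omega$. After fixing bounded sequential lifts of $e_n$, $y_n$, $me_n$ and $e_nm$ in $\ell^\infty(\IN,A)$, choose for each $N\in\IN$ an integer $n^*(N)\ge N$ large enough that $y_{n^*(N)}$ approximates $m$ tested against $e_i$ within $1/N$ in $A_\omega$ for all $i\le N$, and pick a decreasing chain of sets $J_N\in\omega$ on which the corresponding inequalities hold at the representative level. Defining $N(k):=\sup\set{N:k\in J_N}$ (which tends to $\infty$ along $\omega$) and $x_k:=y_{n^*(N(k)),k}$ then produces $x:=[(x_k)_k]\in A_\omega$ satisfying $xe_i=me_i$ and $e_ix=e_im$ in $A_\omega$ for every $i$. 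A standard three-$\eps$ argument, approximating $b\in B$ by $e_Nb\approx b\approx be_N$, upgrades these identities to $xb=mb$ and $bx=bm$ for all $b\in B$, so $x\in\CN(B,A_\omega)$ and $\pi(x)=m$.

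For \ref{snm:2}, set $D:=D_{B,A_\omega}$ and observe first that $\ann(B,A_\omega)=\ann(D,A_\omega)$, since an element annihilates $B$ if and only if it annihilates the hereditary \cstar-algebra it generates. Applying \ref{snm:1} with $D$ in place of $B$ — legitimate because $(e_n)_n$ is also an approximate unit for $D$ — yields the stated $*$-isomorphism $\CN(D,A_\omega)/\ann(B,A_\omega)\cong\CM(D)$. Any $x\in A_\omega\cap B'$ normalizes $D=\quer{BA_\omega B}$ (since $x\cdot b_1ab_2=b_1(xa)b_2\in BA_\omega B$ and symmetrically on the right), so $A_\omega\cap B'\subset\CN(D,A_\omega)$; thus $F(B,A_\omega)$ embeds into $\CM(D)$ with image visibly contained in $\CM(D)\cap B'$. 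For the reverse inclusion, pick $m\in\CM(D)\cap B'$ and lift it via \ref{snm:1} to some $y_0\in\CN(D,A_\omega)$. For each $b\in B\subset D$ both $y_0b$ and $by_0$ lie in $D$ by the normalizer property, and $\pi(y_0b)=mb=bm=\pi(by_0)$ in $\CM(D)$. Since $\pi$ restricts to the canonical faithful inclusion $D\hookrightarrow\CM(D)$, this forces $y_0b=by_0$; hence $y_0\in A_\omega\cap B'$ already, and $m$ lies in the image of $F(B,A_\omega)$.

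The only genuinely technical step is the diagonal reindexation in \ref{snm:1}: Kirchberg's $\eps$-test from \cite{Kirchberg2004} settles this instantly for an ultrafilter, but is unavailable for a general free filter. The saving observation is that only countably many approximations (indexed by $i\in\IN$) must be encoded, so an explicit decreasing nested intersection of sets in $\omega$ succeeds, at the cost of some extra bookkeeping.
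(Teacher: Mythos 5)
Your argument is correct, and it lands in the same place as the paper, but part \ref{snm:1} takes a genuinely different route. The paper first reduces to the Fréchet filter: it lifts $B$ to a $\sigma$-unital subalgebra $C=\overline{k\rho_\omega^{-1}(B)k}\subset A_\infty$ along the surjection $\rho_\omega:A_\infty\to A_\omega$, produces the multiplier lift there by diagonalizing the elements $h^{1/k}Mh^{1/k}$ (tested against a strictly positive $h$) over strictly increasing indices, and then pushes back down through the commuting square of extensions to multiplier algebras. You instead diagonalize directly along the arbitrary free filter $\omega$, testing $y_n=e_nme_n$ against a telescoping approximate unit $e_{n+1}e_n=e_n$ over a nested sequence of filter sets $J_N\in\omega$; this buys a self-contained argument with no passage through $A_\infty$, at the price of exactly the bookkeeping you mention. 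One point to make explicit there: $N(k)=\sup\set{N \mid k\in J_N}$ can be infinite for a general decreasing chain, so you should shrink, e.g.\ replace $J_N$ by $J_N\cap\set{N,N+1,\dots}$, which is legitimate because every free filter contains all cofinite sets; with that fix your estimate $\sup_{k\in J_N}\|x_ke_{i,k}-(me_i)_k\|\leq 2/N$ for $N\geq i$ goes through, and the three-$\eps$ upgrade to $xb=mb$, $bx=bm$ and hence $x\in\CN(B,A_\omega)$, $\pi(x)=m$ is fine. In part \ref{snm:2} you follow the paper's skeleton ($\ann(B,A_\omega)=\ann(D_{B,A_\omega},A_\omega)$, $A_\omega\cap B'\subset\CN(D_{B,A_\omega},A_\omega)$, surjectivity by lifting $m\in\CM(D_{B,A_\omega})\cap B'$), but your finishing step is slightly different and arguably cleaner: since $y_0b$ and $by_0$ lie in $D_{B,A_\omega}$ and $\pi$ restricts to the faithful embedding $D_{B,A_\omega}\into\CM(D_{B,A_\omega})$, you get $y_0b=mb=bm=by_0$ on the nose, whereas the paper only extracts $[b,x]\in\ann(B,A_\omega)$ from an arbitrary lift and then needs the square-root trick $xb=b^{1/2}xb^{1/2}=bx$ to conclude that the lift genuinely commutes with $B$.
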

\begin{proof}
\ref{snm:1}: Since $B\subset \CN(B,A_\omega)$ is an ideal, there is a unique $*$-homomorphism $\pi: \CN(B,A_\omega) \to \CM(B)$ extending the identity map on $B$. More explicitly, for $m\in \CN(B,A_\omega)$, $\pi(m)$ is given by
\[
\pi(m)b=mb\quad \text{and} \quad b\pi(m)=bm\quad \text{for}\ b\in B.
\]
Moreover,
\[
\ker(\pi) = \set{x\in \CN(B,A_\omega) ~|~  xb=bx=0~\text{for all}~b\in B} = \ann(B,A_\omega).
\]

Let $h\in B$ be a strictly positive element. Let $\rho_\omega:A_\infty \to A_\omega$ denote the  natural surjection. Take a positive lift $k\in A_\infty$ for $h$ and define 
\[
C:=\overline{k\rho_\omega^{-1}(B)k}\subset A_\infty.
\]
Then $C$ is a $\sigma$-unital \cstar-subalgebra of $A_\infty$ satisfying $\rho_\omega(C)=B$. The surjection $\rho_\omega:C\to B$ extends to a strictly continuous $*$-epimorphism $\CM(C)\to \CM(B)$. Observe that $\rho_\omega$ maps $\CN(C,A_\infty)$ into $\CN(B,A_\omega)$. Consider also the canonical $*$-homomorphism $\pi': \CN(C,A_\infty)\to\CM(C)$. We obtain a commutative diagram
\[
\xymatrix{
 \CN(C,A_\infty) \ar[r]^{\rho_\omega} \ar[d]^{\pi'} & \CN(B,A_\omega) \ar[d]^{\pi} \\
 \CM(C) \ar@{->>}[r] & \CM(B)
}
\]
So in order to prove the assertion, we may without loss of generality restrict to the case $\omega=\infty$.

Let $M\in \CM(B)$. For each $n\in \IN$, we find some $k\in \IN$ such that $a_n=h^{1/k}Mh^{1/k}\in B$ satisfies
\[
\|a_nh - Mh \|, \|ha_n - hM \|<\frac{1}{n}.
\]
Let us now represent all these elements by bounded sequences, i.e.~write $h=[(h_k)_k]$, $a_n=[(a^{(n)}_k)_k]$, $Mh=[(b_k)_k]$ and $hM=[(c_k)_k]$, respectively. We find a strictly increasing sequence of natural numbers $(n_k)_k$ with the property that
\[
\|a^{(j)}_k h_k - b_k \|,\|h_k a^{(j)}_k - c_k\| \leq \frac{1}{j}\quad \text{for all}\ j\in \IN \text{ and } k\geq n_j.
\]
Define $m\in A_\infty$ as the equivalence class of the bounded sequence given by
\[
m_k:=
\begin{cases}
0 &,\quad \text{if}\ 0\leq k < n_1,\\
a^{(j)}_k &,\quad \text{if}\ n_j\leq k < n_{j+1}.
\end{cases}
\]
By construction,
\[
\limsup_{k\to \infty}\| m_k h_k - b_k \| \leq \inf_{j\in \IN} \sup_{k\geq n_j} \| m_k h_k - b_k \| \leq \inf_{j\in \IN} \frac{1}{j}=0.
\]
Analogously, we also obtain that $\limsup_{k\to \infty}\| h_k m_k - c_k \|=0$. Hence, $mh=Mh$ and $hm=hM$. As $h$ is strictly positive for $B$ and $mh,hm\in B$, one concludes that $m\in \CN(B,A_\infty)$. As the multipliers $M$ and $\pi(m)$ coincide on the strictly positive element $h\in B$, we get that $\pi(m)=M$. This shows that $\pi$ is surjective.

\ref{snm:2}: First note that $\ann(B,A_\omega)=\ann(D_{B,A_\omega},A_\omega)$, since the inclusion $B\subset D_{B,A_\omega}$ is non-degenerate. Therefore, \ref{snm:1} indeed induces an isomorphism $\CN(D_{B,A_\omega},A_\omega)/\ann(B,A_\omega)\stackrel{\cong}{\longrightarrow} \CM(D_{B,A_\omega})$. Since $A_\omega\cap B'\subset\CN(D_{B,A_\omega},A_\omega)$, this isomorphism restricts to an inclusion from $F(B,A_\omega)$ into $\CM(D_{B,A_\omega})\cap B'$. In order to show that it is surjective, it suffices to show that
\[
A_\omega\cap B' = \pi^{-1}\big( \CM(D_{B,A_\omega})\cap B' ).
\]
If $y\in \CM(D_{B,A_\omega})\cap B'$, consider any lift $x\in \CN(D_{B,A_\omega},A_\omega)$ with $\pi(x)=y$. Since $y$ lies in the relative commutant of $B$, we know that $[b,x]\in\ann(B,A_\omega)$ for all $b\in B$ by construction of $\pi$. But then $xb=xb^{1/2}b^{1/2}=b^{1/2}xb^{1/2}=bx$ for all $b\in B_+$, using this fact twice for the square root $b^{1/2}$. Thus $x\in A_\omega\cap B'$, which finishes the proof.
\end{proof}

\begin{prop} \label{D with compacts}
Let $A$ be a \cstar-algebra, $\CK$ the compact operators on some separable Hilbert space and $\omega$ a free filter on $\IN$. Then
the canonical embedding from $A_\omega\otimes\CK$ into $(A\otimes\CK)_\omega$ restricts to an isomorphism from $D_{\omega,A}\otimes\CK$ onto $D_{\omega, A\otimes\CK}$.
\end{prop}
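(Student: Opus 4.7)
The plan is to verify that the canonical $*$-homomorphism $\iota\colon A_\omega\otimes\CK\to (A\otimes\CK)_\omega$, sending $[(a_n)_n]\otimes k$ to $[(a_n\otimes k)_n]$, restricts to a $*$-isomorphism between the hereditary subalgebras in question. Injectivity of $\iota$ is automatic since $\CK$ is nuclear, so the only work lies in verifying the two set-theoretic inclusions $\iota(D_{\omega,A}\otimes\CK)\subseteq D_{\omega,A\otimes\CK}$ and $D_{\omega,A\otimes\CK}\subseteq\iota(D_{\omega,A}\otimes\CK)$. The key technical tool will be a fixed approximate unit $(p_n)_n\subset\CK$ consisting of finite-rank projections, so that $\CK=\overline{\bigcup_n p_n\CK p_n}$ with each corner $p_n\CK p_n\cong M_{k_n}$ finite-dimensional.

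For the forward inclusion, I would check it on a spanning set for $D_{\omega,A}\otimes\CK$ of the form $(a_1xa_2)\otimes k$ with $a_1,a_2\in A$, $x\in A_\omega$, and (by density) $k\in p_n\CK p_n$ for some $n$. The identity $p_nkp_n=k$ then allows the factorization $(a_1xa_2)\otimes k=(a_1\otimes p_n)(x\otimes k)(a_2\otimes p_n)$ inside $A_\omega\otimes\CK$. Applying the multiplicative map $\iota$ and identifying the outer factors $a_i\otimes p_n\in A\otimes\CK$ with their images as constant sequences in $(A\otimes\CK)_\omega$, the product manifestly lies in $(A\otimes\CK)\cdot(A\otimes\CK)_\omega\cdot(A\otimes\CK)\subseteq D_{\omega,A\otimes\CK}$.

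The reverse inclusion is the main work. A spanning set for $D_{\omega,A\otimes\CK}$ consists of products $ybz$ with $y,z\in A\otimes\CK$ and $b\in (A\otimes\CK)_\omega$. The idea is to approximate $y,z$ by elements of $A\otimes p_n\CK p_n$ for some common $n$, so that $(1\otimes p_n)y=y=y(1\otimes p_n)$ and similarly for $z$; then $ybz=y\cdot\bigl((1\otimes p_n)b(1\otimes p_n)\bigr)\cdot z$, and the middle factor lies in the sequence algebra $(A\otimes M_{k_n})_\omega$. Using the standard identification $(A\otimes M_{k_n})_\omega\cong M_{k_n}(A_\omega)\cong A_\omega\otimes M_{k_n}$, I can expand the middle factor as $\sum_{i,j}\tilde b_{ij}\otimes e_{ij}$ with $\tilde b_{ij}\in A_\omega$, and similarly expand $y$ and $z$ in matrix coordinates with entries in $A$. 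Each matrix entry of $ybz$ then becomes a finite sum of products of the form $a\tilde b_{ij}a'$ with $a,a'\in A$, which lies in $A\cdot A_\omega\cdot A\subseteq D_{\omega,A}$; hence $ybz\in D_{\omega,A}\otimes M_{k_n}\subseteq\iota(D_{\omega,A}\otimes\CK)$. The only real subtlety is checking that compression by the multiplier $1\otimes p_n$ is compatible with the sequence algebra structure and genuinely produces an element of the desired matrix sequence algebra; once this reduction to a finite-dimensional corner is in place, the rest is bookkeeping with matrix units.
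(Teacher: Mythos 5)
Your argument is correct and is essentially the paper's own proof: both reduce to the finite matrix corners $p_n\CK p_n\cong M_{k_n}$ via the identification $A_\omega\otimes M_{k_n}\cong (A\otimes M_{k_n})_\omega$, obtain $D_{\omega,A}\otimes M_{k_n}\cong D_{\omega,A\otimes M_{k_n}}$ there, and then use density of $\bigcup_n A\otimes M_{k_n}$ in $A\otimes\CK$. The only difference is presentational: the paper writes this as a chain of equalities of closures over the increasing union of corners, whereas you verify the two inclusions element-wise on spanning products, and the compression step you flag as the subtle point is precisely the equality $\overline{\bigcup_n (A\otimes M_n)(A\otimes M_n)_\omega(A\otimes M_n)}=\overline{\bigcup_n (A\otimes M_n)(A\otimes\CK)_\omega(A\otimes M_n)}$ that the paper asserts without further comment.
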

\begin{proof}
Note that in the case of a finite-dimensional Hilbert space, this is obvious. In that case, for each $n\in\IN$, it is well-known that $A_\omega\otimes M_n\cong (A\otimes M_n)_\omega$ via the canonical embedding, and thus
\[
\begin{array}{ccl}
D_{\omega,A}\otimes M_n &=& \quer{AA_\omega A}\otimes M_n \\\\
&=& \quer{(A\otimes \eins_n) (A_\omega\otimes M_n) (A\otimes \eins_n)} \\\\
&\cong& \quer{(A\otimes M_n) (A\otimes M_n)_\omega (A\otimes M_n)} \\\\
&=& D_{\omega, A\otimes M_n}.
\end{array}
\]
Now let $\CK$ be the compacts on the separable and infinite-dimensional Hilbert space. If we view $M_n$ embedded into $M_{n+1}$ as the upper left corner, then it follows that
\[
\begin{array}{ccl}
D_{\omega, A}\otimes\CK &=& \quer{\bigcup_{n\in\IN} D_{\omega,A}\otimes M_n} \\\\
&\cong& \quer{\bigcup_{n\in\IN} D_{\omega, A\otimes M_n}} \\\\
&=& \quer{\bigcup_{n\in\IN} (A\otimes M_n)(A\otimes M_n)_\omega(A\otimes M_n)} \\\\
&=& \quer{\bigcup_{n\in\IN} (A\otimes M_n)(A\otimes\CK)_\omega(A\otimes M_n)} \\\\
&=& \quer{(A\otimes\CK)(A\otimes\CK)_\omega(A\otimes\CK)} = D_{\omega, A\otimes\CK}.
\end{array}
\]
This finishes the proof.
\end{proof}

The following is a well-known fact among \cstar-algebraists. However, we will include a proof for the reader's convenience, as the authors had trouble finding a reference where this is made explicit.

\begin{prop} \label{relative commutant compacts general}
Let $A$ be a \cstar-algebra and let $\CK$ denote the \cstar-algebra of compact operators on some Hilbert space. Then
\[
\CM(A \otimes \CK) \cap (\eins \otimes \CK)' = \CM(A) \otimes \eins.
\]
\end{prop}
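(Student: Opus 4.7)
The inclusion $\supseteq$ is clear, so the work lies in $\subseteq$. The plan is to fix an orthonormal basis of the underlying Hilbert space with associated matrix units $\{e_{ij}\}\subset\CK$, and for a given $x\in\CM(A\otimes\CK)$ commuting with every element of $\eins\otimes\CK$, first reconstruct an element $m\in\CM(A)$ such that $x=m\otimes\eins$, and then verify this identity on the dense set of elementary tensors.

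For each fixed $i$, commutation of $x$ with $\eins\otimes e_{ii}$ combined with the trivial identity $(b\otimes e_{ii})=(\eins\otimes e_{ii})(b\otimes e_{ii})(\eins\otimes e_{ii})$ gives
\[
x(b\otimes e_{ii}) \ =\ (\eins\otimes e_{ii})\cdot x\cdot(b\otimes e_{ii})\cdot(\eins\otimes e_{ii}) \ \in\ A\otimes e_{ii}\CK e_{ii} \ =\ A\otimes\IC e_{ii}.
\]
Hence there is a linear map $L_i\colon A\to A$ with $x(b\otimes e_{ii})=L_i(b)\otimes e_{ii}$, and evaluating $x(ba\otimes e_{ii})=x(b\otimes e_{ii})(a\otimes e_{ii})$ shows $L_i(ba)=L_i(b)a$, i.e.\ $L_i$ is a left multiplier of $A$. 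Running the same argument for $x^*$ produces a right multiplier $R_i$ with $(b\otimes e_{ii})x=R_i(b)\otimes e_{ii}$, and comparing the two ways to compute $(a\otimes e_{ii})\cdot x\cdot(b\otimes e_{ii})$ yields the double-centralizer relation $aL_i(b)=R_i(a)b$, so $(L_i,R_i)$ assembles into an element $m_i\in\CM(A)$.

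The next step is to show $m_i$ is independent of $i$. Using $e_{ij}=e_{ii}e_{ij}=e_{ij}e_{jj}$ together with commutation of $x$ with $\eins\otimes e_{ij}$, one computes
\[
m_ib\otimes e_{ij} \ =\ x(b\otimes e_{ii})(\eins\otimes e_{ij}) \ =\ x(b\otimes e_{ij}) \ =\ (\eins\otimes e_{ij})x(b\otimes e_{jj}) \ =\ m_jb\otimes e_{ij}
\]
for every $b\in A$, so $m_i=m_j=:m$. Thus $x\cdot(b\otimes e_{ij})=(m\otimes\eins)\cdot(b\otimes e_{ij})$ for all $i,j$ and $b\in A$; by linearity and density of the finite-rank operators in $\CK$, this extends to $x\cdot y=(m\otimes\eins)\cdot y$ for every $y\in A\otimes\CK$. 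The right-sided identity follows analogously by repeating the argument for $x^*$, and so $x=m\otimes\eins$ by faithfulness of the multiplier action on $A\otimes\CK$.

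The truly delicate point is the first step, namely manufacturing the multiplier $m_i$ from the pointwise data without assuming $A$ to be unital. This is what forces us to extract the left and right actions separately from $x$ and $x^*$, and to derive the compatibility $aL_i(b)=R_i(a)b$ from the sandwich identity above, rather than trying to define $m_i$ directly; once this is in place, the rest of the argument is a bookkeeping exercise with matrix units.
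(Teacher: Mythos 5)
Your argument is correct, and it reaches the result by a somewhat different mechanism than the paper. Both proofs share the matrix-unit bookkeeping: compress by $\eins\otimes e_{ii}$ to see that the corner of $x$ lives in $A\otimes\IC e_{ii}$, and use commutation with $\eins\otimes e_{ij}$ to show that the resulting data do not depend on $i$. The difference lies in how the element of $\CM(A)$ is manufactured. The paper only evaluates $x$ on $u_\lambda\otimes e_{ii}$ for an approximate unit $(u_\lambda)_\lambda$ of $A$, obtaining a net $(a_\lambda)_\lambda$ in $A$, and then has to prove that this net is strictly Cauchy (using that $\bigl(\sum_{i\in L}u_\lambda\otimes e_{ii}\bigr)_{(\lambda,L)}$ is an approximate unit of $A\otimes\CK$ and that $m$ is its own strict limit along it); the multiplier $c$ arises as a strict limit and $m=c\otimes\eins$ follows by strict continuity. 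You instead evaluate $x$ and $x^*$ on all of $A\otimes e_{ii}$, read off a left centralizer $L_i$ and a right centralizer $R_i$, and verify the compatibility $aL_i(b)=R_i(a)b$, so that the multiplier appears directly as a double centralizer, with no net-convergence argument beyond the standard fact that a multiplier is determined by its action on the algebra. Your route is more algebraic and arguably more economical (in particular, $L_i=L_j$ already forces $m_i=m_j$, since a multiplier is determined by its left action); the paper's route avoids invoking the double-centralizer picture of $\CM(A)$ and stays entirely within strict-topology approximation. Both versions work for non-unital $A$ and for $\CK$ of arbitrary, possibly non-separable, Hilbert spaces, which is the generality the proposition requires.
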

\begin{proof}
Fix a generating set of matrix units $\set{e_{kl}\ |\ k,l\in I}$ for $\CK$, where $I$ is an index set whose cardinality matches the dimension of the underlying Hilbert space. Let $m \in \CM(A \otimes \CK) \cap (\eins \otimes \CK)'$ and $(u_\lambda)_\lambda$ an approximate unit for $A$. Using $m(\eins \otimes e_{ij}) = (\eins \otimes e_{ij})m $, we get
\[
m(u_\lambda \otimes e_{ii}) =  m(u_\lambda \otimes e_{ij}e_{jj}e_{ji}) = (\eins \otimes e_{ij}) m (u_\lambda \otimes e_{jj})(\eins \otimes e_{ji})
\]
for $i,j\in I$. If $i=j$, this shows that $m(u_\lambda \otimes e_{ii}) = a_{\lambda,i} \otimes e_{ii}$ for some $a_{\lambda,i}\in A$. The computation now shows that for all $i,j \in I$,
\[
a_{\lambda,i} \otimes e_{ii} = (\eins \otimes e_{ij}) (a_{\lambda,j} \otimes e_{jj}) (\eins \otimes e_{ji}) = a_{\lambda,j} \otimes e_{ii}.
\]
This implies $a_{\lambda,i} = a_{\lambda,j} = a_\lambda$ for all $i,j \in I$ and all $\lambda$. For $L\fin I$ and each $\lambda$, we get
\[
m \cdot \Bigl(\sum\limits_{i\in L} u_\lambda \otimes e_{ii}\Bigl) = a_\lambda \otimes \sum\limits_{i\in L} e_{ii}.
\]
As the net $\dst \Bigl(\sum\limits_{i\in L} u_\lambda \otimes e_{ii} \Bigl)_{(\lambda,L)}$, with the obvious underlying directed set, is an approximate unit for $A\otimes \CK$, we see that the left hand side of the equation converges strictly to $m \in \CM(A\otimes \CK)$. Hence, the net $\dst \Bigl( a_\lambda \otimes \sum\limits_{i\in L} e_{ii} \Bigl)_{(\lambda,L)}$ also converges strictly to $m \in \CM(A\otimes \CK)$. Let $F\fin A$ be a finite subset and $\eps > 0$. Fix some $k \in I$. Then for sufficiently large $(\lambda_1,L_1), (\lambda_2,L_2)$ with $k \in L_1 \cap L_2$ and every $a'\in F$, we obtain
\[
\begin{array}{ccl}
\| (a_{\lambda_1} - a_{\lambda_2})a' \| &=& \| ((a_{\lambda_1} - a_{\lambda_2})a')\otimes e_{kk} \| \\\\
&=& \dst \Bigl\| \Bigl( a_{\lambda_1} \otimes \bigl( \sum\limits_{i\in L_1} e_{ii} \bigl) - a_{\lambda_2} \otimes \bigl( \sum\limits_{i\in L_2} e_{ii} \bigl) \Bigl) a' \otimes e_{kk} \Bigl\| \leq \eps.
\end{array}
\]
and analogously $\| a'(a_{\lambda_1} - a_{\lambda_2}) \| \leq \eps$. This shows that $(a_\lambda)_\lambda$ converges strictly to some $c \in \CM(A)$. It follows that $\dst \Bigl( a_\lambda \otimes \sum\limits_{i=1}^n e_{ii} \Bigl)_{(\lambda,L)}$ converges strictly to $c\otimes \eins$. This shows that $m = c\otimes \eins \in \CM(A)\otimes \eins$. As the other inclusion is trivial, the proof is complete.
\end{proof}

\begin{cor} \label{relative commutant compact}
Let $A$ be a \cstar-algebra, $\CK$ the compact operators on some separable Hilbert space and $\omega$ a free filter on $\IN$. Then
\[
\CM(D_{\omega,A\otimes \CK})\cap (\eins\otimes \CK)'=\CM(D_{\omega,A})\otimes \eins.
\]
\end{cor}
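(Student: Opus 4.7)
The plan is to derive this directly from the two preceding results, Proposition \ref{D with compacts} and Proposition \ref{relative commutant compacts general}. By Proposition \ref{D with compacts}, the canonical embedding $A_\omega \otimes \CK \hookrightarrow (A\otimes \CK)_\omega$ restricts to a $*$-isomorphism $\Psi: D_{\omega, A} \otimes \CK \to D_{\omega, A\otimes \CK}$. Since this is a $*$-isomorphism between (in general non-unital) \cstar-algebras, it extends uniquely to a $*$-isomorphism $\tilde\Psi: \CM(D_{\omega, A}\otimes \CK) \to \CM(D_{\omega, A\otimes \CK})$ between their multiplier algebras.

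Applying Proposition \ref{relative commutant compacts general} with $A$ replaced by the \cstar-algebra $D_{\omega, A}$ yields the identity
\[
\CM(D_{\omega,A} \otimes \CK) \cap (\eins \otimes \CK)' = \CM(D_{\omega, A}) \otimes \eins
\]
inside $\CM(D_{\omega, A}\otimes\CK)$. I then transport this equality along $\tilde\Psi$ to obtain the desired identity inside $\CM(D_{\omega, A\otimes \CK})$.

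The only small piece of bookkeeping is to confirm that $\tilde\Psi$ sends the distinguished copy of $\eins \otimes \CK\subset \CM(D_{\omega, A}\otimes \CK)$ bijectively onto the distinguished copy of $\eins \otimes \CK\subset \CM(D_{\omega, A\otimes \CK})$; this is the step that deserves the most attention, though it is not a true obstacle. It suffices to check that for every $k\in\CK$, the strict limit of an approximate unit of $D_{\omega,A}\otimes\CK$ multiplied by $\eins\otimes k$ from either side is preserved by $\Psi$, which is immediate from the fact that $\Psi$ is the restriction of the canonical embedding $A_\omega \otimes \CK \hookrightarrow (A\otimes \CK)_\omega$ and this embedding manifestly respects tensor factors of the form $\eins\otimes k$. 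Consequently $\tilde\Psi(\eins\otimes k)=\eins\otimes k$, and the two relative commutants correspond under $\tilde\Psi$, which finishes the proof.
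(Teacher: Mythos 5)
Your proposal is correct and follows essentially the same route as the paper: identify $\CM(D_{\omega,A\otimes\CK})$ with $\CM(D_{\omega,A}\otimes\CK)$ via the canonical isomorphism of Proposition \ref{D with compacts}, observe that the distinguished copies of $\eins\otimes\CK$ correspond, and apply Proposition \ref{relative commutant compacts general} with $D_{\omega,A}$ in place of $A$. Your explicit check that $\tilde\Psi$ fixes $\eins\otimes k$ is a reasonable (and slightly more detailed) version of the paper's remark that the canonical subalgebras $\eins\otimes\CK$ get identified.
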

\begin{proof}
By \ref{D with compacts}, $\CM(D_{\omega,A\otimes \CK}) \cong \CM(D_{\omega,A}\otimes \CK)$ naturally. In particular, the canonical subalgebras $\eins \otimes \CK$ on both sides get identified under this isomorphism. The claim now follows directly from \ref{relative commutant compacts general}.
\end{proof}

Next, we use this observation to prove that $F_\omega(A)$ is a stable invariant for $\sigma$-unital \cstar-algebras. For free ultrafilters this is already known due to Kirchberg's pioneering work on central sequences of \cstar-algebras \cite{Kirchberg2004}.

\begin{prop}[cf.~{\cite[1.10(3)]{Kirchberg2004}}] 
\label{morita invariance FA}
Let $A$ be a $\sigma$-unital \cstar-algebra, $\CK$ the compact operators on some separable Hilbert space and $\omega$ a free filter on $\IN$. Then the canonical $*$-monomorphism
\[
F_\omega(A) \into F_\omega(A\otimes \CK),\ a\mapsto a\otimes \eins,
\]
given by the identifications of \ref{surjection normalizer multiplier}\ref{snm:2}
is an isomorphism.
\end{prop}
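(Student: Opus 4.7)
My plan is to translate the statement into the language of multiplier algebras and reduce it to Corollary \ref{relative commutant compact} together with a non-degeneracy argument. By Proposition \ref{surjection normalizer multiplier}\ref{snm:2}, the $\sigma$-unitality of $A$ (and hence of $A\otimes\CK$) gives canonical identifications $F_\omega(A)\cong \CM(D_{\omega,A})\cap A'$ and $F_\omega(A\otimes\CK)\cong \CM(D_{\omega,A\otimes\CK})\cap (A\otimes\CK)'$. Combining this with Proposition \ref{D with compacts}, which naturally identifies $D_{\omega,A\otimes\CK}$ with $D_{\omega,A}\otimes\CK$ and hence their multiplier algebras, the canonical map in question becomes the inclusion
\[
\CM(D_{\omega,A})\cap A' \ \longrightarrow \ \CM(D_{\omega,A}\otimes\CK)\cap (A\otimes\CK)',\quad c\mapsto c\otimes\eins.
\]
Injectivity is immediate, so only surjectivity needs to be addressed.

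Let $m\in\CM(D_{\omega,A}\otimes\CK)\cap (A\otimes\CK)'$. The key step is to show that $m$ automatically commutes with $\eins\otimes k$ for every $k\in\CK$, viewed as a multiplier of $D_{\omega,A}\otimes\CK$. Once this is established, Corollary \ref{relative commutant compact} forces $m=c\otimes\eins$ for some $c\in\CM(D_{\omega,A})$, and then the commutation $(c\otimes\eins)(a\otimes k)=(a\otimes k)(c\otimes\eins)$ translates to $(ca-ac)\otimes k=0$ for all $a\in A$ and $k\in\CK$, i.e.\ $c\in\CM(D_{\omega,A})\cap A'$, exhibiting the desired preimage.

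To prove the commutation, I would use the non-degenerate action of $A\otimes\CK$ on $D_{\omega,A\otimes\CK}$, which is immediate from $D_{\omega,A\otimes\CK}=\overline{(A\otimes\CK)(A\otimes\CK)_\omega(A\otimes\CK)}$. Thus elements of the form $(a\otimes k')\cdot y$ with $a\in A$, $k'\in\CK$ and $y\in (A\otimes\CK)_\omega$ are dense in $D_{\omega,A\otimes\CK}$, and on such an element
\[
m(\eins\otimes k)\bigl((a\otimes k')y\bigr) = m\bigl((a\otimes kk')y\bigr) = (a\otimes kk')\,my = (\eins\otimes k)\,m\bigl((a\otimes k')y\bigr),
\]
using the commutation of $m$ with $a\otimes kk'\in A\otimes\CK$. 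A symmetric computation handles right multiplication, and by density we obtain the commutation of $m$ and $\eins\otimes k$ in $\CM(D_{\omega,A}\otimes\CK)$.

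The main subtle point is precisely this last step: although $m$ is only assumed to commute with the (non-unital) algebra $A\otimes\CK$, one must upgrade this to commutation with the strictly larger $\eins\otimes\CK$ sitting inside the multiplier algebra, and this relies crucially on having the approximation $D_{\omega,A\otimes\CK}\cong D_{\omega,A}\otimes\CK$ from Proposition \ref{D with compacts} so that $A\otimes\CK$ still acts non-degenerately after the identification.
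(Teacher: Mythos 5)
Your proposal is correct and takes essentially the same route as the paper: identify $F_\omega(A)$ and $F_\omega(A\otimes\CK)$ via \ref{surjection normalizer multiplier}\ref{snm:2} and \ref{D with compacts}, and reduce surjectivity to \ref{relative commutant compact}; the only difference is that the paper simply asserts the equality $\CM(D_{\omega,A\otimes\CK})\cap(A\otimes\CK)'=(\CM(D_{\omega,A\otimes\CK})\cap(\eins\otimes\CK)')\cap(A\otimes\eins)'$, whereas you spell out the nontrivial inclusion, namely that commutation with $A\otimes\CK$ upgrades to commutation with $\eins\otimes\CK$. One small repair: in the density argument the elements $(a\otimes k')y$ should be taken with $y\in D_{\omega,A\otimes\CK}$ rather than $y\in(A\otimes\CK)_\omega$ (for general $y$ such products need not lie in $D_{\omega,A\otimes\CK}$, where $m$ acts); the non-degeneracy of the $A\otimes\CK$-action on $D_{\omega,A\otimes\CK}$ that you invoke gives exactly the density of the products $(a\otimes k')y$ with $y\in D_{\omega,A\otimes\CK}$, and your computation then goes through verbatim.
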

\begin{proof}
By \ref{surjection normalizer multiplier}, it is enough to show that the inclusion
\[
\CM(D_{\omega,A})\cap A'\into \CM(D_{\omega,A\otimes \CK})\cap (A\otimes \CK)',
\]
which is induced by the first-factor embedding, is surjective. We have
\[
\begin{array}{cll}
\CM(D_{\omega,A\otimes \CK})\cap (A\otimes \CK)' &=& (\CM(D_{\omega, A\otimes \CK}) \cap (\eins\otimes \CK)')\cap (A\otimes \eins)' \\
&\stackrel{\ref{relative commutant compact}}{=}& (\CM(D_{\omega,A}) \cap A')\otimes \eins,
\end{array}
\]
and hence this embedding is indeed onto.
\end{proof}

We will also make use of the following standard fact:

\begin{lemma}[cf.\ {\cite[15.2.2]{Loring1997} and \cite[3.9]{Thiel2011}}] \label{perturb weakly semiproj}
Let $\FC$ be a class of separable, weakly semiprojective \cstar-algebras. Let $A$ be a separable \cstar-algebra that can be locally approximated by \cstar-algebras in $\FC$. Then $A$ is an inductive limit of \cstar-algebras in $\FC$.
\end{lemma}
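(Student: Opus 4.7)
The plan is a standard telescoping argument: use weak semiprojectivity at each stage to convert the local approximations in $A$ into honest connecting $*$-homomorphisms in an inductive system, and then appeal to Elliott's approximate intertwining to identify the limit with $A$.

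First I would fix a countable dense sequence $(a_k)_{k\in\IN}$ in $A$ and a summable sequence of tolerances $\eta_n=2^{-n}$. I would then build inductively a sequence of $*$-subalgebras $B_n\subset A$, each isomorphic to some $C_n\in\FC$, together with connecting $*$-homomorphisms $\phi_n\colon B_n\to B_{n+1}$. At stage $n$, weak semiprojectivity of $B_n$ supplies a finite set $G_n\fin B_n$ and a $\delta_n>0$ such that every sufficiently $(G_n,\delta_n)$-approximate $*$-homomorphism from $B_n$ into any \cstar-algebra can be $\eta_n$-perturbed on $G_n$ to an honest $*$-homomorphism. Since $A$ is locally approximated by $\FC$, I would then choose $B_{n+1}$ with
\[
\{a_1,\dots,a_{n+1}\}\cup G_n \ \subset_{\min(\delta_n,\eta_{n+1})}\ B_{n+1}.
\]
Sending each $x\in G_n$ to a nearby element of $B_{n+1}$ yields a $(G_n,\delta_n)$-approximate $*$-homomorphism agreeing on $G_n$ with the inclusion $\iota_n\colon B_n\hookrightarrow A$ up to $\delta_n$; applying weak semiprojectivity would then produce a genuine $\phi_n\colon B_n\to B_{n+1}$ with
\[
\|\iota_{n+1}(\phi_n(x))-\iota_n(x)\|\le\eta_n,\quad x\in G_n.
\]
For the bookkeeping to work, I would arrange each $G_n$ to contain not only what weak semiprojectivity demands but also an $\eta_n$-dense subset of the unit ball of $B_n$ together with the images $\phi_{k,n}(G_k)$ of the previously constructed generators for $k<n$.

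Given such an inductive system $(B_n,\phi_n)$ and the nearly compatible family of inclusions $\iota_n\colon B_n\hookrightarrow A$, Elliott's approximate intertwining produces a $*$-homomorphism $\iota_\infty\colon\varinjlim(B_n,\phi_n)\to A$. Its image contains every $\iota_n(B_n)$ and hence approximates each $a_k$ to arbitrary precision, and since any $*$-homomorphism between \cstar-algebras has closed image, this forces $\iota_\infty$ to be surjective. For injectivity, I would iterate the near-compatibility estimate to conclude that $\phi_{m,n}$ is asymptotically isometric on $G_n$; via the density properties built into the choice of $G_n$ this shows that $\iota_\infty$ is isometric on the image of each $B_n$ and therefore on the entire limit. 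Hence $A\cong\varinjlim C_n$.

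The main obstacle I expect is the careful simultaneous control of three interdependent parameters at each stage: the perturbation tolerance $\delta_n$ dictated by weak semiprojectivity of $B_n$, the accuracy of the local approximation in $A$, and the richness of the generating set $G_n$. These must be orchestrated so that the cumulative approximation error remains summable, so that the $G_n$ used for perturbation is also rich enough to certify the injectivity of $\iota_\infty$ in the limit, and so that the relations between successive $G_n$'s are preserved by the $\phi_n$'s. Once this bookkeeping is set up consistently, the conclusion follows routinely from the framework of approximate intertwinings as in \cite{Loring1997, Thiel2011}.
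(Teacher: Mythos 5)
The paper does not prove this lemma at all — it is quoted as a standard fact with references to Loring and Thiel — and your telescoping argument is exactly the standard proof underlying those citations: choose nested approximating subalgebras $B_n\subset A$ isomorphic to members of $\FC$, use weak semiprojectivity to replace the near-inclusions by honest connecting maps $\phi_n\colon B_n\to B_{n+1}$ with summable defects, and then run a one-sided approximate intertwining. The bookkeeping you describe (putting an $\eta_n$-dense subset of the unit ball of $B_n$ and the forward images $\phi_{k,n}(G_k)$ into the controlled sets, summable tolerances) is the right bookkeeping, and your surjectivity/injectivity discussion is sound; in fact injectivity comes for free, since the downward maps $\iota_n$ are isometric inclusions, so the induced map on the inductive limit is isometric on the image of each $B_n$ by the formula for the limit norm — no "asymptotic isometry" estimate is really needed.

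The one step you should repair is the way weak semiprojectivity is invoked. The property you attribute to it — that every $(G_n,\delta_n)$-approximate $*$-homomorphism from $B_n$ into an arbitrary \cstar-algebra can be perturbed to an honest one — is not the definition, and for a $B_n$ that is merely separable (not finitely generated or presented) an assignment on the finite set $G_n$ does not even determine an approximate morphism of $B_n$, so the step as written does not parse. What your construction actually needs is the relative perturbation lemma: for every finite $F\fin B_n$ and $\eps>0$ there exist a finite $G\fin B_n$ and $\delta>0$ such that whenever $C\subset A$ is a \cstar-subalgebra with $G\subset_\delta C$, there is a $*$-homomorphism $\rho\colon B_n\to C$ with $\|\rho(x)-x\|\leq\eps$ for all $x\in F$. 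This is true, but it must be derived from the lifting definition of weak semiprojectivity: assuming it fails for some $(F,\eps)$, one takes counterexample subalgebras $C_k\subset A$ with $G_k\subset_{1/k}C_k$ for an exhausting sequence $G_k$, observes that the inclusion $B_n\into A$ then induces a $*$-homomorphism $B_n\to\prod_k C_k/\bigoplus_k C_k$, lifts it eventually to $\prod_{k\geq m}C_k$, and reads off a contradiction for large $k$. Relatedly, avoid the small circularity of asking for the perturbation to be controlled "on $G_n$" itself: fix first the finite set $F_n$ on which you need closeness (dense-ball net, images of earlier data, the $a_j$'s), let weak semiprojectivity produce $G_n\supset F_n$ and $\delta_n$, and require $B_{n+1}$ to $\delta_n$-contain $G_n$; closeness on $F_n$ is all the intertwining argument uses. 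With these corrections your proof is the standard one and goes through.
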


%%%%%%%%%%%%%%%%%%%%%%%%%%%%%%%%%%%%%%%%%%%%%%%%%%%%%%%%%%%%%%%%

\section{Sequentially split homomorphisms: The non-equivariant case}

\begin{defi} Let $A$ and $B$ be two \cstar-algebras. A $*$-homomorphism $\phi:A\to B$ is called sequentially split, if there exists a commutative diagram of $*$-homomorphisms of the form
\[
\xymatrix{
A \ar[dr]_\phi \ar[rr] && A_\infty \\
& B \ar[ur]_\psi &
}
\]
where the horizontal map is the canonical inclusion. If $\psi: B\to A_\infty$ is a $*$-homomorphism fitting into the above diagram, then we say that $\psi$ is an approximate left-inverse for $\phi$.

We say that $A$ is sequentially dominated by $B$ or that $B$ sequentially dominates $A$, if there exists a sequentially split $*$-homomorphism $\phi:A \to B$.
\end{defi}

\begin{lemma} \label{weak-seq-split}
Let $A$ and $B$ be two separable \cstar-algebras. A $*$-homomorphism $\phi:A\to B$ is sequentially split if and only if there exists a $*$-homomorphism $\psi:B\to (A_\infty)_\infty$ such that the following diagram commutes 
\[
\xymatrix{
A \ar[dr]_\phi \ar[rr] && (A_\infty)_\infty \\
& B \ar[ur]_\psi &
}
\]
\end{lemma}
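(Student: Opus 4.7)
The forward direction is immediate: any approximate left-inverse $\psi': B \to A_\infty$ for $\phi$ yields the required $\psi: B \to (A_\infty)_\infty$ upon composing with the canonical embedding of $A_\infty$ into $(A_\infty)_\infty$ as constant sequences. So the content of the lemma is the converse. My plan is to produce an honest approximate left-inverse $\tilde{\psi}: B \to A_\infty$ for $\phi$ by diagonally reindexing $\psi$, using the separability of both $A$ and $B$ in an essential way.

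Concretely, I would fix a countable dense $*$-subalgebra over $\IQ + i\IQ$, say $B_0 = \{c_1,c_2,\dots\} \subset B$, together with a dense sequence $(a_m)_m$ in $A$. For each $c_i$, pick a representing sequence $(y_k^{(i)})_k \in \ell^\infty(\IN, A_\infty)$ of $\psi(c_i) \in (A_\infty)_\infty$, arranged so that whenever $c_i = \phi(a_m)$ the chosen sequence is the constant sequence $a_m$. Then lift each $y_k^{(i)} \in A_\infty$ further to a bounded sequence $(z_{k,j}^{(i)})_j \in \ell^\infty(\IN, A)$. The hypothesis that $\psi$ is a $*$-homomorphism, together with the commutativity of the given diagram, translates into the statement that, for each $n$, every finite algebraic relation among $c_1,\dots,c_n$ is satisfied by the $y_k^{(i)}$ up to error vanishing as $k\to\infty$ in $A_\infty$-norm, and $\|y_k^{(i)} - a_m\|_{A_\infty} \to 0$ whenever $c_i = \phi(a_m)$ with $i,m \leq n$. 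A standard diagonal argument then produces indices $k_n, j_n$ so that all of these finitely many conditions hold at the $A$-level within error $1/n$. Setting $w_n^{(i)} = z_{k_n,j_n}^{(i)}$ for $n\geq i$, the assignment $c_i \mapsto [(w_n^{(i)})_n] \in A_\infty$ is then a genuine $*$-homomorphism on $B_0$ (approximate algebraic relations with error going to zero become exact identities in $A_\infty$), it factors $\phi$ correctly on the dense sequence $(a_m)_m \subset A$, is automatically contractive, and extends by continuity to the desired $\tilde{\psi}: B \to A_\infty$.

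The only real technical hurdle is the bookkeeping of the diagonalization: countably many algebraic identities in $B_0$ together with countably many compatibility conditions with $\phi$ on $(a_m)_m$ must be arranged simultaneously at each level $n$. This can be done by hand by enumerating the countable list of constraints, or more smoothly via Kirchberg's $\eps$-test as in \cite{Kirchberg2004}, which is tailor-made for exactly this kind of simultaneous reindexation. Once this is set up, no further difficulty arises.
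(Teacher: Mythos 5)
Your overall strategy is exactly the paper's: lift $\psi$ twice (first to $\ell^\infty(\IN,A_\infty)$, then to $\ell^\infty(\IN,A)$), fix countable dense $\IQ[i]$-$*$-subalgebras, diagonalize over the countable list of approximate algebraic and compatibility constraints, obtain a genuine $*$-homomorphism on the dense subalgebra of $B$, and extend by continuity. The forward direction is handled the same way. Two points in your write-up need repair, one of them substantive.

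The substantive one is the claim that the resulting map $B_0\to A_\infty$ ``is automatically contractive.'' A $*$-homomorphism defined only on a dense (incomplete) $*$-subalgebra of a \cstar-algebra need not be bounded at all: the usual spectral-radius argument fails because inverses computed in $B$ need not lie in $B_0$ (evaluation of polynomials at a point outside $[0,1]$ gives an unbounded $*$-homomorphism on a dense $*$-subalgebra of $\CC[0,1]$). In your construction nothing in the listed constraints (algebraic relations plus compatibility with $\phi$) controls $\|w_n^{(i)}\|$ against $\|c_i\|$; with carelessly chosen lifts the diagonal classes can have norms exceeding $\|c_i\|$ by factors unbounded in $i$, and then no continuous extension to $B$ exists. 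The fix is easy and stays inside your argument: either choose the representing sequences and lifts norm-minimizing (possible, since elements of a quotient \cstar-algebra lift with the same norm, and $\limsup_k\|y_k^{(i)}\|=\|\psi(c_i)\|\leq\|c_i\|$ for any representative), or add the norm conditions $\|w_n^{(i)}\|\leq\|c_i\|+1/n$ for $i\leq n$ to the list of constraints being diagonalized. The paper does precisely the latter, carrying the estimates $\|\psi_{(m_k,n_k)}(b)\|\leq\|b\|+2/k$ through both diagonalizations; you must do one of these explicitly rather than appeal to automatic contractivity.

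The minor point: for your compatibility constraints ``$c_i=\phi(a_m)$'' to yield $\tilde\psi\circ\phi=\iota_A$ on a dense subset of $A$, you must arrange that $B_0$ actually contains $\phi$ of your chosen dense ($\IQ[i]$-$*$-sub)algebra of $A$, i.e.\ the analogue of the paper's assumption $\phi(A')\subset B'$; otherwise those constraints may be vacuous. With these two adjustments your argument is complete and coincides in substance with the proof in the paper.
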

\begin{proof}
Evidently, we only have to prove the ``if''-part. Find $*$-linear maps $\psi_{(m,n)}:B\to A$, $m,n \in \IN$, such that $(\psi_{(m,n)})_{(m,n)}: B\to\ell^\infty(\IN^2,A)$ is well-defined and lifts $\psi$. As $\psi$ is a $*$-homomorphism, we have
\[
\limsup\limits_{m \to \infty}~\limsup\limits_{n\to\infty} \| \psi_{(m,n)}(bb')-\psi_{(m,n)}(b)\psi_{(m,n)}(b') \| = 0
\]
and
\[
\limsup\limits_{m \to \infty}~\limsup\limits_{n \to \infty} \| \psi_{(m,n)}(b) \| \leq \| b \|
\]
for all $b,b'\in B$.
Moreover, we have
\[
\limsup_{m \to \infty}~\limsup_{n\to\infty} \| \psi_{(m,n)}\circ\phi(a)-a\| = 0
\]
for all $a\in A$ by choice of $\psi$.

Let $S_1\subset S_2\subset\dots\fin A$ be an increasing sequence of finite subsets $S_k$ such that $A' = \bigcup_{k\in \IN} S_k$ is dense in $A$. Let $F_1\subset F_2\subset\dots\fin B$ be an increasing sequence of finite subsets $F_k$ such that $B' = \bigcup_{k \in \IN} F_k$ is a dense $\IQ[i]$-$*$-subalgebra of $B$. We may assume that $\phi(A') \subset B'$. 

By the above, there is a sequence of natural numbers $(m_k)_{k \in \IN}$ such that for all $k \in \IN$, $b,b' \in F_k$ and $a \in S_k$,
\[
\limsup\limits_{n \to \infty}\| \psi_{(m_k,n)}(bb') - \psi_{(m_k,n)}(b)\psi_{(m_k,n)}(b') \| \leq \frac{1}{k}
\]
and
\[
\limsup\limits_{n \to \infty} \| \psi_{(m_k,n)}(b) \| \leq \| b \| + \frac{1}{k}
\]
and
\[
\limsup_{n \to \infty} \| \psi_{(m_k,n)}\circ\phi(a)-a\|\leq \frac{1}{k}.
\]
Now we find a sequence of natural numbers $(n_k)_k$ such that for all $k \in \IN$, $b,b' \in F_k$ and $a \in S_k$, we have
\[
\| \psi_{(m_k,n_k)}(bb') - \psi_{(m_k,n_k)}(b)\psi_{(m_k,n_k)}(b') \| \leq \frac{2}{k}
\]
and
\[
 \| \psi_{(m_k,n_k)}(b) \| \leq \| b \| + \frac{2}{k}
\]
and
\[
\| \psi_{(m_k,n_k)}\circ\phi(a)-a\|\leq \frac{2}{k}.
\]
The resulting $*$-linear map $(\psi_{(m_k,n_k)})_k : B\to \ell^\infty(\IN,A)$ is well-defined because for $x\in B$, we have $\sup_{k\in\IN} \|\psi_{(m_k,n_k)}(x)\| \leq \sup_{m,n\in\IN} \|\psi_{(m,n)}(x)\| < \infty$.

It now follows directly from the construction that the $\IQ[i]$-$*$-linear map
\[
\psi':B'\to A_\infty,\ \psi'(b)=[(\psi_{(m_k,n_k)}(b))_k],
\]
is a contractive $*$-homomorphism satisfying $\psi'\circ \phi(a) = a$ for all $a \in A'$. Hence, $\psi'$ extends uniquely to a $*$-homomorphism $\psi':B \to A_\infty$. Moreover, as $A' \subset A$ is dense, $\psi' \circ \phi$ coincides with the canonical embedding of $A$ into $A_\infty$. This shows that $\phi$ is sequentially split.
\end{proof}

\begin{prop} 
\label{seq-split-comp}
Restricted to separable \cstar-algebras, the composition of any two sequentially split $*$-homomorphisms is sequentially split.
\end{prop}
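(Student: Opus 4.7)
My plan is to reduce to Lemma \ref{weak-seq-split} by building, from the two given approximate left-inverses, a single $*$-homomorphism from the target of the composition into the double sequence algebra of the source.

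Concretely, suppose $\phi:A\to B$ and $\phi':B\to C$ are sequentially split $*$-homomorphisms between separable \cstar-algebras, with approximate left-inverses $\psi:B\to A_\infty$ and $\psi':C\to B_\infty$. The first step is to observe that $\psi$ extends canonically to a $*$-homomorphism $\psi_\infty:B_\infty\to (A_\infty)_\infty$ by the rule $[(b_n)_n]\mapsto [(\psi(b_n))_n]$. This is well-defined because $\psi$ is contractive, so it maps $\ell^\infty(\IN,B)$ into $\ell^\infty(\IN,A_\infty)$ and sends sequences tending to zero in norm to sequences tending to zero in $A_\infty$. It is a $*$-homomorphism because $\psi$ is. Moreover, $\psi_\infty$ restricts to $\psi$ on the copy of $B$ inside $B_\infty$, and in particular sends a constant sequence $[(b)_n]$ to the constant sequence $[(\psi(b))_n]\in(A_\infty)_\infty$.

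Next, I form the composition $\Psi := \psi_\infty\circ\psi':C\to (A_\infty)_\infty$. A direct diagram chase verifies the required relation on the copy of $A$: for $a\in A$, the element $\phi'(\phi(a))\in C$ is mapped by $\psi'$ to $\phi(a)\in B\subset B_\infty$ (the constant sequence), which is then mapped by $\psi_\infty$ to the constant sequence $[(\psi(\phi(a)))_n] = [(a)_n]\in (A_\infty)_\infty$. This is precisely the image of $a$ under the canonical embedding $A\hookrightarrow (A_\infty)_\infty$. Hence the diagram
\[
\xymatrix{
A \ar[dr]_{\phi'\circ\phi} \ar[rr] && (A_\infty)_\infty \\
& C \ar[ur]_\Psi &
}
\]
commutes.

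Finally, since $A$ and $C$ are separable by hypothesis, Lemma \ref{weak-seq-split} applies to the $*$-homomorphism $\phi'\circ\phi:A\to C$ and the auxiliary map $\Psi:C\to(A_\infty)_\infty$, yielding an honest approximate left-inverse $C\to A_\infty$ for $\phi'\circ\phi$. I do not expect a serious obstacle here; the only subtle point is the well-definedness and functorial behaviour of the induced map $\psi_\infty$, which is standard, and the invocation of \ref{weak-seq-split} is exactly what lets one avoid having to produce a map directly into $A_\infty$ rather than into the iterated sequence algebra.
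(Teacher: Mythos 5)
Your proof is correct and is essentially the paper's own argument: the paper also composes the approximate left-inverse $C\to B_\infty$ with the map $B_\infty\to(A_\infty)_\infty$ induced componentwise by the approximate left-inverse $B\to A_\infty$, and then invokes Lemma \ref{weak-seq-split}. You merely spell out the well-definedness of the induced map, which the paper leaves implicit in its commutative diagram.
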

\begin{proof}
Let $A,B$ and $C$ be separable \cstar-algebras and assume that
$\varphi:A\to B$ and $\psi:B\to C$ are sequentially split $*$-homomorphisms. We obtain a commutative diagram
\[
	\xymatrix{
	A \ar[rr] \ar[rd]_\varphi && A_\infty \ar[rr] && (A_\infty)_\infty	\\
	& B \ar[rd]_\psi \ar[ru] \ar[rr] && B_\infty \ar[ru] &\\
	&& C \ar[ru] &&
	}
\]
where the horizontal maps are the respective standard embeddings. It now follows from \ref{weak-seq-split} that $\psi\circ\varphi: A \to C$ is sequentially split.
\end{proof}

\begin{lemma}
\label{unital homs in FA}
Let $A$ be a $\sigma$-unital \cstar-algebra and $C$ a unital \cstar-algebra. There exists a unital $*$-homomorphism from $C$ to $F_\infty(A)$ if and only if the first-factor embedding $\id_A\otimes\eins: A\to A\otimes_{\max} C$ is sequentially split.
\end{lemma}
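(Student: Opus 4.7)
The plan is to derive the equivalence from two key pieces of the preceding preliminaries: the multiplication $*$-homomorphism $\mu\colon F_\infty(A)\otimes_{\max}A\to D_{\infty,A}\subset A_\infty$ from Remark \ref{multiplication FA}, and the identification $F_\infty(A)\cong \CM(D_{\infty,A})\cap A'$ from Proposition \ref{surjection normalizer multiplier}\ref{snm:2}. Together with Remark \ref{reminder-kirchberg}, which guarantees that under the $\sigma$-unitality of $A$ the unit of $F_\infty(A)$ is represented by some $e\in A_\infty\cap A'$ acting as a unit on $A$, these carry essentially all the content of the lemma.

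For the ``only if'' direction, given a unital $*$-homomorphism $\eta\colon C\to F_\infty(A)$, I would form the composition
\[
\psi\colon A\otimes_{\max}C\xrightarrow{\text{flip}}C\otimes_{\max}A\xrightarrow{\eta\otimes\id_A}F_\infty(A)\otimes_{\max}A\xrightarrow{\mu}A_\infty.
\]
Because $\eta$ is unital, $\psi(a\otimes\eins)=\mu(\eins_{F_\infty(A)}\otimes a)=e\cdot a=a$, so $\psi$ is an approximate left-inverse for $\id_A\otimes\eins$, exhibiting the latter as sequentially split.

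For the ``if'' direction, start from a $*$-homomorphism $\psi\colon A\otimes_{\max}C\to A_\infty$ satisfying $\psi(a\otimes\eins)=a$ for all $a\in A$. The first step is to verify that $\psi$ actually lands in $D_{\infty,A}$ and is non-degenerate there: for an approximate unit $(e_\lambda)$ of $A$, multiplicativity of $\psi$ together with $\psi(e_\lambda\otimes\eins)=e_\lambda$ gives
\[
\psi(a\otimes c)=\lim_{\lambda,\mu}e_\mu\,\psi(a\otimes c)\,e_\lambda\in\overline{A\cdot A_\infty\cdot A}=D_{\infty,A},
\]
and the same computation shows that $\psi(A\otimes_{\max}C)\cdot D_{\infty,A}\supset A\cdot D_{\infty,A}$ is dense in $D_{\infty,A}$. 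Consequently $\psi$ extends uniquely to a unital, strictly continuous $*$-homomorphism $\bar\psi\colon \CM(A\otimes_{\max}C)\to \CM(D_{\infty,A})$.

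Since $C$ is unital, there is a canonical unital $*$-embedding $C\hookrightarrow\CM(A\otimes_{\max}C)$ sending $c$ to $\eins_{\CM(A)}\otimes c$, and I would define $\eta_0\colon C\to\CM(D_{\infty,A})$ by $\eta_0(c)=\bar\psi(\eins\otimes c)$. Multiplicativity of $\bar\psi$ combined with $\bar\psi(a\otimes\eins)=a$ gives
\[
\eta_0(c)\,a=\bar\psi\bigl((\eins\otimes c)(a\otimes\eins)\bigr)=\bar\psi(a\otimes c)=\bar\psi\bigl((a\otimes\eins)(\eins\otimes c)\bigr)=a\,\eta_0(c),
\]
so $\eta_0$ takes values in $\CM(D_{\infty,A})\cap A'\cong F_\infty(A)$, producing the required unital $*$-homomorphism $C\to F_\infty(A)$. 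The only real obstacle I anticipate is verifying the non-degeneracy of $\psi$ into $D_{\infty,A}$, since this is what legitimizes the passage to multipliers and the subsequent identification with $F_\infty(A)$; once this is in place, both directions are straightforward bookkeeping based on the multiplication map of Remark \ref{multiplication FA} and the isomorphism of Proposition \ref{surjection normalizer multiplier}\ref{snm:2}.
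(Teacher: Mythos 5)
Your proposal is correct and follows essentially the same route as the paper: the ``only if'' direction composes the multiplication map of Remark \ref{multiplication FA} with the given unital map, using the unit of $F_\infty(A)$ furnished by $\sigma$-unitality, and the ``if'' direction shows $\psi$ lands non-degenerately in $D_{\infty,A}$, extends it to multipliers, and identifies $\CM(D_{\infty,A})\cap A'$ with $F_\infty(A)$ via \ref{surjection normalizer multiplier}\ref{snm:2}. Your explicit approximate-unit verification of non-degeneracy simply spells out what the paper asserts in one line.
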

\begin{proof}
The ``only if'' part is clear because for every such $C$, the $*$-homomor\-phism from \ref{multiplication FA} provides an approximate left-inverse for the first-factor embedding.

So let us show the ``if'' part. Assuming that $\id_A\otimes\eins: A\to A\otimes_{\max} C$ is sequentially split, let $\psi: A\otimes_{\max} C\to A_\infty$ be an approximate left-inverse. Since the first-factor embedding is non-degenerate, the image of $\psi$ must be contained in $D_{\infty,A}$, and the resulting $*$-homomorphism $\psi: A\otimes_{\max} C\to D_{\infty,A}$ is also non-degenerate. Let us consider the unique strictly continuous extension $\psi: \CM(A\otimes_{\max} C)\to\CM(D_{\infty,A})$. Since $\psi(a\otimes\eins)=a$ for every $a\in A$, it follows that $\psi(\eins\otimes C)\subset\CM(D_{\infty,A})\cap A'$. By \ref{surjection normalizer multiplier}\ref{snm:2}, the right-hand side is naturally isomorphic to $F_\infty(A)$, so this yields the existence of a unital $*$-homomorphism from $C$ to $F_\infty(A)$.
\end{proof}

\begin{prop} 
\label{unital homs in FA seq split}
Let $A$ be a separable \cstar-algebra and $B$ a $\sigma$-unital \cstar-algebra. Assume that $\phi:A\to B$ is a sequentially split $*$-homomorphism. Let $C$ be a unital, separable \cstar-algebra and assume that there exists a unital $*$-homomorphism from $C$ to $F_\infty(B)$. Then there exists a unital $*$-homomorphism from $C$ to $F_\infty(A)$.
\end{prop}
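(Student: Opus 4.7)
The plan is to translate both the hypothesis and the desired conclusion into equivalent statements about sequential splitness of first-factor embeddings via Lemma \ref{unital homs in FA}, to combine the sequential splits using $\phi$, and finally to apply Lemma \ref{unital homs in FA} in the reverse direction. First, I would use Lemma \ref{unital homs in FA} (applied to the $\sigma$-unital \cstar-algebra $B$) to reformulate the hypothesis: the existence of a unital $*$-homomorphism $C \to F_\infty(B)$ is equivalent to the first-factor embedding $\iota_B := \id_B\otimes\eins: B \to B\otimes_{\max} C$ being sequentially split. Fix an approximate left-inverse $\tau: B\otimes_{\max} C \to B_\infty$ for $\iota_B$, and an approximate left-inverse $\psi: B\to A_\infty$ for $\phi$.

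Writing $\psi_\infty: B_\infty \to (A_\infty)_\infty$ for the $*$-homomorphism induced pointwise by $\psi$, I would then consider the composition
\[
\eta := \psi_\infty \circ \tau \circ (\phi \otimes \id_C) : A\otimes_{\max} C \to (A_\infty)_\infty.
\]
The key verification is that $\eta \circ (\id_A\otimes\eins)$ agrees with the standard embedding $A \hookrightarrow (A_\infty)_\infty$. For $a\in A$, this unfolds as $\eta(a\otimes\eins) = \psi_\infty(\tau(\phi(a)\otimes\eins)) = \psi_\infty(\phi(a)) = \psi(\phi(a)) = a$, successively using that $\tau\circ\iota_B$ is the canonical inclusion $B \hookrightarrow B_\infty$, that $\psi_\infty$ sends constant sequences to constant sequences, and that $\psi\circ\phi$ is the canonical inclusion $A\hookrightarrow A_\infty$.

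Finally, since $A$ and $C$ are both separable, so is $A\otimes_{\max} C$, and hence Lemma \ref{weak-seq-split} applies and shows that $\iota_A := \id_A\otimes\eins: A\to A\otimes_{\max} C$ is sequentially split. A second application of Lemma \ref{unital homs in FA} (noting that $A$ is separable, hence $\sigma$-unital) then produces the desired unital $*$-homomorphism $C\to F_\infty(A)$. The only real obstacle is that the intermediary map $\eta$ naturally lands in the double sequence algebra $(A_\infty)_\infty$ rather than in $A_\infty$ directly, which is precisely the reason Lemma \ref{weak-seq-split} was established beforehand; with that tool in hand, the argument reduces to the diagram chase above.
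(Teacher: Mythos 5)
Your proposal is correct and follows essentially the same route as the paper: translate the hypothesis via Lemma \ref{unital homs in FA} into sequential splitness of $\id_B\otimes\eins$, use the identity $(\id_B\otimes\eins)\circ\phi=(\phi\otimes\id_C)\circ(\id_A\otimes\eins)$ together with a composition-type argument to produce an approximate left-inverse for $\id_A\otimes\eins$ landing in $(A_\infty)_\infty$, then invoke Lemma \ref{weak-seq-split} and Lemma \ref{unital homs in FA} again. The only cosmetic difference is that you write out the map $\psi_\infty\circ\tau$ explicitly where the paper just cites the argument from Proposition \ref{seq-split-comp}.
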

\begin{proof}
By \ref{unital homs in FA}, the first-factor embedding $\id_B\otimes\eins: B\to B\otimes_{\max} C$ is sequentially split. By the same argument as in the proof of \ref{seq-split-comp}, the composition $(\id_B\otimes\eins)\circ \phi: A\to B\otimes_{\max} C$ has an approximate left-inverse into the double sequence algebra $(A_\infty)_\infty$. Since we have $(\id_B\otimes\eins)\circ\phi = (\phi\otimes\id_C)\circ (\id_A\otimes\eins)$, it follows that also $\id_A\otimes\eins: A\to A\otimes_{\max} C$ has an approximate left-inverse into $(A_\infty)_\infty$. Because $A$ and $C$ are separable, it follows from \ref{weak-seq-split} that $\id_A\otimes\eins$ is sequentially split. The proof is completed with an application of \ref{unital homs in FA}.
\end{proof}

Although not stated directly in these terms, an important result of Toms and Winter on tensorial absorption of strongly self-absorbing \cstar-algebras fits nicely into the picture of sequentially split $*$-homomorphisms: 

\begin{theorem}[cf.~{\cite[2.3]{TomsWinter2007}}] \label{strongly self-absorbing}
Let $A$ be a separable \cstar-algebra and $\CD$ a strongly self-absorbing \cstar-algebra. Then $A$ is $\CD$-stable, that is $A\cong A\otimes\CD$, if and only if the first factor embedding from $A$ into $A\otimes \CD$ is sequentially split.
\end{theorem}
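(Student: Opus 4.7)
My plan is to derive the theorem by chaining two equivalences, one being Lemma \ref{unital homs in FA} and the other being the Toms--Winter characterization of $\CD$-stability via central sequences.

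First, observe that a strongly self-absorbing \cstar-algebra $\CD$ is by definition separable and unital, and is moreover known to be nuclear. In particular, the maximal and minimal tensor products with $A$ coincide, so $A\otimes_{\max}\CD = A\otimes\CD$. Since $A$ is separable (and thus $\sigma$-unital) and $\CD$ is unital and separable, the hypotheses of Lemma \ref{unital homs in FA} are met with $C:=\CD$. Applying that lemma yields a first equivalence: the first factor embedding $A\to A\otimes\CD$ is sequentially split if and only if there exists a unital $*$-homomorphism from $\CD$ into $F_\infty(A)$.

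For the second equivalence, I would invoke \cite[Theorem~2.3]{TomsWinter2007}, which states that for a separable \cstar-algebra $A$ and a strongly self-absorbing \cstar-algebra $\CD$, one has $A\cong A\otimes\CD$ if and only if there exists a unital $*$-homomorphism from $\CD$ into the central sequence algebra of $A$. Composing the two equivalences then produces exactly the statement of the theorem.

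The main obstacle, such as it is, is bookkeeping: one needs to confirm that the form of the central sequence algebra used in the Toms--Winter criterion is compatible with the object $F_\infty(A)$ appearing in Lemma \ref{unital homs in FA}. When $A$ is unital, $\ann(A,A_\infty)=0$ and $F_\infty(A)=A_\infty\cap A'$, so the identification is immediate. In the $\sigma$-unital case, $F_\infty(A)=(A_\infty\cap A')/\ann(A,A_\infty)$ is Kirchberg's corrected central sequence algebra as in Remark \ref{reminder-kirchberg}, which is precisely the version for which the Toms--Winter criterion extends to the non-unital setting. Once this identification is made, no further argument is required and the proof reduces to two applications of earlier results.
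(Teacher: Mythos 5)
Your two-step argument is correct, and it is worth pointing out that the paper itself offers no proof of Theorem \ref{strongly self-absorbing} at all: the statement is presented as Toms--Winter's absorption theorem recast in the sequentially split language, and the remark immediately following it (``\ref{unital homs in FA} gives a conceptual reason why Kirchberg's variant \cite[4.11]{Kirchberg2004} is essentially the same result'') sketches exactly the translation you spell out. So your derivation --- Lemma \ref{unital homs in FA} to pass between ``first-factor embedding sequentially split'' and ``unital $*$-homomorphism $\CD\to F_\infty(A)$'', followed by the central-sequence characterization of $\CD$-stability --- is precisely the intended justification, and your bookkeeping (nuclearity of $\CD$, which holds since strongly self-absorbing algebras have approximately inner half-flip, so that $\otimes_{\max}=\otimes$; separability of $A$ giving $\sigma$-unitality; and $F_\infty(A)=A_\infty\cap A'$ when $A$ is unital, as in Remark \ref{reminder-kirchberg}) is the right thing to check. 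The only point to tighten is the attribution of your second equivalence: the criterion in the exact form you use it, namely ``$A\cong A\otimes\CD$ if and only if there is a unital $*$-homomorphism $\CD\to F_\infty(A)$'' for a possibly non-unital separable $A$, is Kirchberg's variant \cite[4.11]{Kirchberg2004} (the corrected central sequence algebra $F_\infty(A)$ is Kirchberg's construction and does not appear in \cite{TomsWinter2007}); for unital $A$ one can instead combine \cite[7.2.2]{Rordam2001} with the easy converse direction, which is how the paper argues in the proof of \ref{sequential dominance properties 2}\ref{sdp2:6}. With the citation adjusted accordingly --- quote Kirchberg's \cite[4.11]{Kirchberg2004} (or reduce to the unital case) rather than attributing the $F_\infty(A)$-statement verbatim to \cite[2.3]{TomsWinter2007} --- your proof is complete and matches the route the paper indicates.
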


In this way, \ref{unital homs in FA} gives a conceptual reason why Kirchberg's variant \cite[4.11]{Kirchberg2004} is essentially the same result. We note that Toms-Winter's theorem can be viewed as a stronger version of a result found in R{\o}rdam's book \cite[7.2.2]{Rordam2001}.

The following shows that the property of being sequentially split is compatible with inductive limits.

\begin{prop} \label{inductive limits}
Let $\set{A_n, \kappa_n}_{n\in\IN}$ and $\set{B_n, \theta_n}_{n\in\IN}$ be two inductive systems of separable \cstar-algebras with inductive limits $A$ and $B$, respectively. Let $\phi_n: A_n\to B_n$ be a sequence of $*$-homomorphisms compatible with the two inductive systems, i.e.~$\theta_n\circ\phi_n = \phi_{n+1}\circ\kappa_n$ for all $n$. Denote by $\phi: A\to B$ the induced $*$-homomorphism given by the universal property of the inductive limits.
If each of the $*$-homomorphisms $\phi_n$ is sequentially split, then $\phi$ is sequentially split.
\end{prop}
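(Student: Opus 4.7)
The plan is to apply Lemma \ref{weak-seq-split}. Since $A$ and $B$ are separable (being countable inductive limits of separable \cstar-algebras), it suffices to produce a $*$-homomorphism $\Psi: B \to (A_\infty)_\infty$ such that $\Psi\circ\phi$ agrees with the canonical embedding $A\hookrightarrow(A_\infty)_\infty$.

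Write $\kappa_n^\infty: A_n\to A$ and $\theta_n^\infty: B_n\to B$ for the canonical maps into the limits, and $\kappa_{n,k}: A_n\to A_k$, $\theta_{n,k}: B_n\to B_k$ (for $k\geq n$) for the intermediate connecting maps. For each $n$, fix an approximate left-inverse $\psi_n: B_n\to(A_n)_\infty$ of $\phi_n$ and set $\chi_n := (\kappa_n^\infty)_\infty \circ \psi_n: B_n\to A_\infty$. The defining property $\psi_n\circ\phi_n = \iota_{A_n}$ immediately yields $\chi_k\circ\phi_k = \iota_A\circ\kappa_k^\infty$ for every $k$, where $\iota_A: A\hookrightarrow A_\infty$ is the standard embedding. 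To assemble the $\chi_k$ into a single map, consider the dense $*$-subalgebra $B^\circ := \bigcup_n\theta_n^\infty(B_n)\subset B$. For $b\in B^\circ$ pick some $n$ and a lift $b_n\in B_n$ with $\theta_n^\infty(b_n)=b$, set $b_k := \theta_{n,k}(b_n)\in B_k$ for $k\geq n$, and define $\Psi(b)\in(A_\infty)_\infty$ to be the class of the bounded sequence $(\chi_k(b_k))_{k\geq n}$ (padded with zeros for $k<n$).

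The main obstacle is to show that $\Psi$ is well-defined and extends to a contractive $*$-homomorphism on all of $B$, particularly when the connecting maps fail to be injective so that the choice of lift $b_n$ is not unique. The key input is the classical norm formula $\|\theta_n^\infty(x)\|_B = \lim_{k\to\infty}\|\theta_{n,k}(x)\|_{B_k}$ for a \cstar-algebraic inductive limit: it forces $(\chi_k(\theta_{n,k}(x)))_k$ to be a null sequence in $A_\infty$ whenever $\theta_n^\infty(x)=0$. Consequently, two different choices of lift $b_n$ or of starting index $n$ produce sequences differing only by a null sequence, so $\Psi(b)$ is well-defined; the same estimate yields the contractivity $\|\Psi(b)\|\leq\|b\|_B$. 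Multiplicativity and $*$-preservation on $B^\circ$ are automatic because the construction is coordinatewise in the $*$-homomorphisms $\chi_k$, and so $\Psi$ extends uniquely to a $*$-homomorphism $B\to(A_\infty)_\infty$.

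Finally, to verify $\Psi\circ\phi = \iota_A$, take $a=\kappa_n^\infty(a_n)$ in the dense subalgebra $\bigcup_n\kappa_n^\infty(A_n)\subset A$. Then $\phi_n(a_n)$ is a lift of $\phi(a)=\theta_n^\infty(\phi_n(a_n))$, and the compatibility $\theta_{n,k}\circ\phi_n=\phi_k\circ\kappa_{n,k}$ combined with $\chi_k\circ\phi_k=\iota_A\circ\kappa_k^\infty$ gives $\chi_k(\theta_{n,k}(\phi_n(a_n)))=\iota_A(\kappa_n^\infty(a_n))=\iota_A(a)$ for every $k\geq n$. Hence $\Psi(\phi(a))=\iota_A(a)$, and this extends to all of $A$ by continuity. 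An application of Lemma \ref{weak-seq-split} completes the proof.
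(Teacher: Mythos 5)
Your proof is correct and follows essentially the same route as the paper: both invoke Lemma \ref{weak-seq-split} and build the map into $(A_\infty)_\infty$ by composing each $\psi_n$ with the induced map $(A_n)_\infty\to A_\infty$ and indexing over the connecting maps. The only difference is cosmetic: the paper factors through the standard embedding $B\into\prod_n B_n/\bigoplus_n B_n$, whereas you re-derive that embedding by hand via the norm formula $\|\theta_n^\infty(x)\|=\lim_k\|\theta_{n,k}(x)\|$ to get well-definedness on the dense subalgebra and then extend by continuity.
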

\begin{proof}
For $n\in \IN$, let $\psi_n:B_n\to (A_n)_\infty$ be an approximate left-inverse for $\phi_n$. Let $\eta_n:A_n\to A$ and $\eps_n:B_n\to B$ denote the canonical $*$-homomorphisms. The $\psi_n$ give rise to a $*$-homomorphism
\[
\tilde{\psi}:\prod_{n\in \IN} B_n / \bigoplus_{n\in \IN} B_n \to (A_\infty)_\infty,\ \tilde{\psi}([(b_n)_n])=[((\eta_n)_\infty\circ\psi_n(b_n))_n].
\]
Consider the embedding $\iota:B\into \prod_{n\in \IN} B_n / \bigoplus_{n\in \IN} B_n$ given by the standard construction of the inductive limit, that is, 
\[
\iota( \eps_n(b))=[(\theta_{k-1}\circ\ldots\circ \theta_n(b))_{k\geq n}]
\]
for every $n\in \IN$ and $b\in B_n$. Observe that this notation makes sense, as only the tail of a representing sequence is of interest. Let $\psi:B\to (A_\infty)_\infty$ be the $*$-homomorphism given as the composition of $\tilde{\psi}$ with $\iota$. For $n\in\IN$ and $a\in A_n$, we have that
\[
\begin{array}{lcl}
\psi\circ\phi(\eta_n(a)) & = & \psi\circ\eps_n\circ \phi_n(a)\\
 & = & \tilde{\psi}([(\theta_{k-1}\circ\ldots\circ \theta_n(\phi_n(a)))_{k\geq n}])\\
 & = & [((\eta_k)_\infty\circ\psi_k\circ\theta_{k-1}\circ\ldots\circ \theta_n(\phi_n(a)))_{k\geq n}]
\\
 & = & [((\eta_k)_\infty\circ\kappa_{k-1}\circ\ldots\circ\kappa_n(a))_{k\geq n}] \\
 & = & \eta_n(a) \in (A_\infty)_\infty.
\end{array}
\]
This shows that $\psi\circ\phi$ coincides with the standard embedding of $A$ into $(A_\infty)_\infty$. The claim now follows from \ref{weak-seq-split}. 
\end{proof}

As it turns out, the property of being a sequentially split $*$-homomorphism forces the induced maps on \cstar-algebraic invariants to be very tractable. Moreover, numerous \cstar-algebraic approximation properties pass from the target algebra to the domain algebra. The next three theorems make this explicit and form the main result of this section.

\begin{theorem} \label{sequential dominance properties 1}
Let $A$ and $B$ be two \cstar-algebras. Assume that $\phi: A\to B$ is a sequentially split $*$-homomorphism. Then:
\begin{enumerate}[label=\textup{(\Roman*)}]
\item For each hereditary \cstar-subalgebra $E\subset A$, the restriction $\phi|_E: E\to\quer{\phi(E)B\phi(E)}$ is sequentially split. \label{sdp1:1}
\item For each ideal $J$ of $A$, the restriction $\phi|_J: J\to\quer{B\varphi(J)B}$ and the induced map $\phi_{\mathrm{mod}\, J}: A/J\to B/\quer{B\varphi(J)B}$ are sequentially split. \label{sdp1:2}
\item The induced map between the ideal lattices, given by $J\mapsto \quer{B\varphi(J)B}$, is injective. \label{sdp1:3}
\item Assume that $\rho:C\to D$ is another sequentially split $*$-homomorphism. Then $\phi\otimes\rho:A\otimes_{\max} C\to B\otimes_{\max} D$ is sequentially split. \label{sdp1:4}
\item The induced map between the Cuntz semigroups $\Cu(A)\to\Cu(B)$ given by $\langle a \rangle_A \mapsto \langle \phi(a) \rangle_B$ is an order-embedding. \label{sdp1:5}
\item The induced map on $K$-theory $K_*(\phi): K_*(A)\to K_*(B)$ is injective. The same is true for $K$-theory with coefficients in $\IZ_n$ for all $n\geq 2$. \label{sdp1:6}
\item The induced map between the simplices of tracial states $T(\phi): T(B)\to T(A)$ given by $\tau\mapsto \tau\circ\phi$ is surjective. \label{sdp1:7}
\end{enumerate}
\end{theorem}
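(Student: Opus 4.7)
Fix once and for all an approximate left-inverse $\psi:B\to A_\infty$, so that $\psi\circ\phi$ coincides with the canonical inclusion $\iota_A:A\to A_\infty$. The entire proof is driven by a single principle: transport the relevant data from $B$ back to $A_\infty$ via $\psi$ and then descend to $A$ using that $\iota_A$ identifies $A$ isometrically with the constant sequences in $A_\infty$.

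For \ref{sdp1:1}--\ref{sdp1:3}, the guiding observation is that if $E\subset A$ is hereditary then $E\cdot A_\infty\cdot E\subset E_\infty$ (evaluated on bounded-sequence representatives), and for an ideal $J\subset A$ one analogously has $A_\infty\cdot J\cdot A_\infty\subset J_\infty$. Hence the restriction of $\psi$ to $\overline{\phi(E)B\phi(E)}$ lands in $E_\infty$, proving \ref{sdp1:1}; the same mechanism with $J$ in place of $E$ handles the first half of \ref{sdp1:2}, and composing $\psi$ with the natural surjection $A_\infty\to (A/J)_\infty$ (which absorbs the image of $\overline{B\phi(J)B}$) handles the quotient map. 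For \ref{sdp1:3}, if $J_1,J_2$ are ideals with $\overline{B\phi(J_1)B}=\overline{B\phi(J_2)B}$, then $J_1=\psi(\phi(J_1))\subset (J_2)_\infty\cap A=J_2$, and symmetrically $J_2\subset J_1$.

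For \ref{sdp1:4}, given approximate left-inverses $\psi_\phi:B\to A_\infty$ and $\psi_\rho:D\to C_\infty$, I would assemble them into a *-homomorphism $B\otimes_{\max}D\to (A\otimes_{\max}C)_\infty$ by first forming $\psi_\phi\otimes\psi_\rho:B\otimes_{\max}D\to A_\infty\otimes_{\max}C_\infty$ and then composing with the canonical *-homomorphism $A_\infty\otimes_{\max}C_\infty\to(A\otimes_{\max}C)_\infty$ induced by the diagonal on $\ell^\infty$-products (well-defined thanks to commuting multiplier-valued representations). On the image of $\phi\otimes\rho$ this composition reduces to $\iota_{A\otimes_{\max}C}$, as required.

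For the invariant statements \ref{sdp1:5}--\ref{sdp1:7}, the template is uniform: pull a witness of the relevant relation in $B$ back through $\psi$ and then descend from $A_\infty$ (or its stabilization) to $A$. For \ref{sdp1:5}, after stabilizing via \ref{sdp1:4}, any Cuntz subequivalence $r_n\phi(b)r_n^*\to\phi(a)$ in $B\otimes\CK$ is mapped under the stabilized $\psi$ to $\psi(r_n)b\psi(r_n)^*\to a$ in $(A\otimes\CK)_\infty$; representing elements of the latter by bounded sequences then yields a subequivalence inside $A\otimes\CK$ itself. For \ref{sdp1:6}, the same descent shows that $K_*(\iota_A)$ is injective -- a Murray--von Neumann equivalence of projections in matrix algebras over $(A^+)_\infty$ lifts to a bounded sequence whose tail, after functional-calculus perturbation, realizes a genuine equivalence in $M_n(A^+)$ -- and the factorization $K_*(\iota_A)=K_*(\psi)\circ K_*(\phi)$ forces $K_*(\phi)$ to be injective; the $\IZ_n$-coefficient case follows analogously after tensoring with an appropriate separable test algebra and invoking \ref{sdp1:4}. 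For \ref{sdp1:7}, given $\sigma\in T(A)$, fix a free ultrafilter $\omega$ refining the Fr\'echet filter and let $\sigma_\omega$ denote the induced limit trace on $A_\infty$; then $\tau:=\sigma_\omega\circ\psi$ is a tracial state on $B$ with $\tau\circ\phi=\sigma_\omega\circ\iota_A=\sigma$. I expect the principal technical obstacle to be \ref{sdp1:5}, where stabilization, the sequence-algebra ambient space, and the descent of Cuntz subequivalence must all be carefully orchestrated; the remaining parts amount to the same basic descent template applied to different kinds of data.
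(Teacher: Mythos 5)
Your proposal is correct and follows essentially the same route as the paper: everything is transported through the approximate left-inverse $\psi$ and then descended from $A_\infty$ to $A$ via representing sequences and weak stability, with the hereditary/ideal cases resting on $EA_\infty E\subset E_\infty$ and $A_\infty J A_\infty\subset J_\infty$, the quotient case on $A_\infty/J_\infty\cong (A/J)_\infty$, and the trace case on a limit trace along a free ultrafilter. The only cosmetic deviations are that in (V) you use the stabilized left-inverse coming from (IV) instead of the paper's truncation to matrix amplifications $M_n(A_\infty)\cong (M_n(A))_\infty$, and in (VI) the paper makes explicit the reduction to the unital case and the suspension argument (via $\CC_0(\IR)\otimes\phi$ and (IV)) for $K_1$, which your sketch leaves implicit.
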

\begin{proof}
For what follows, $\psi:B\to A_\infty$ is an approximate left-inverse for $\phi$.

\ref{sdp1:1}: Let $E\subset A$ be a hereditary subalgebra. Then $E=\quer{EAE}$, and in particular, $e_1ae_2\in E$ for any $e_1,e_2\in E$ and $a\in A$.
We conclude that for $e_1,e_2\in E$ and $b\in B$,
\[
\psi(\phi(e_1)b\phi(e_2))=e_1\psi(b)e_2\in E_\infty.
\]
So indeed, the restriction $\phi|_E: E\to\quer{\phi(E)B\phi(E)}$ is sequentially split.

\ref{sdp1:2}: Let $J\subset A$ be an ideal and consider the induced $*$-homomorphism $\phi:J\to \overline{B\phi(J)B}$. As $J\subset A$ is an ideal, we conclude that for $j\in J$ and $b,b'\in B$,
\[
\psi(b\phi(j)b')=\psi(b)j\psi(b')\in J_\infty.
\]
Hence, $\phi|_J : J\to \overline{B\phi(J)B}$ is sequentially split. This also shows that $\psi$ induces a $*$-homomorphism
\[
\tilde{\psi}:B/\overline{B\phi(J)B}\to A_\infty/J_\infty\cong (A/J)_\infty,\ [(b_n)_n]\mapsto [(\psi(b_n))_n].
\]
It is clear from the construction that $\tilde{\psi}\circ \phi_{\mathrm{mod}\, J}$ recovers the standard embedding of $A/J$ into its sequence algebra. This proves \ref{sdp1:2}.

\ref{sdp1:3}: Let $I$ and $J$ be two different ideals in $A$. We may assume that $I\nsubseteq J$. By \ref{sdp1:2}, $\psi(\quer{B\varphi(J)B}) \subset J_\infty$ and $I \subset \psi(\quer{B\varphi(I)B}) \subset I_\infty$. As $I \nsubseteq J_\infty$, the ideals $\quer{B\varphi(I)B}$ and $\quer{B\varphi(J)B}$ have to be different. This concludes the proof of \ref{sdp1:3}.

\ref{sdp1:4}: Consider the canonical $*$-homomorphism $A_\infty\otimes_{\max} C_\infty\to (A\otimes_{\max} C)_\infty$ and note that it is compatible with the canonical inclusion of $A\otimes_{\max} C$ into $(A \otimes_{\max} C)_\infty$. There exists a commutative diagram
\[
\xymatrix{
A\otimes_{\max} C \ar[rr] \ar[dr]_{\phi\otimes \rho} & & A_\infty\otimes_{\max} C_\infty \ar[r] & (A\otimes_{\max} C)_\infty \\
 & B\otimes_{\max} D \ar[ru] \ar@{-->}[rru]
}
\]
which shows that $\phi\otimes \rho$ is sequentially split.

\ref{sdp1:5}: Let $a,b\in A\otimes \CK$ be positive elements satisfying $\Cu(\phi)(\langle a \rangle_A) \leq \Cu(\phi)(\langle b \rangle_A) \in \Cu(B)$. Applying $\Cu(\psi)$ to both sides of the inequality, we obtain that $\langle a \rangle_{A_\infty} \leq \langle b \rangle_{A_\infty} \in \Cu(A_\infty)$. Given $\eps > 0$, we find some $v\in A_\infty \otimes \CK$ with the property that $vbv^*=_\eps a$ in $A_\infty \otimes \CK$. We may assume that $v \in M_n(A_\infty) \cong (M_n(A))_\infty$ for some $n\in \IN$. We therefore find some $w \in M_n(A)$ with $wbw^*=_{2\eps} a$ in $A\otimes\CK$.  Hence, $\langle a \rangle_A \leq \langle b \rangle_A \in \Cu(A)$, showing that $\Cu(\varphi)$ is an order embedding.

\ref{sdp1:6}: As $\id_{\CC_0(\IR)} \otimes \phi: \CC_0(\IR)\otimes A \to \CC_0(\IR)\otimes B$ is sequentially split by \ref{sdp1:4}, it suffices to prove the claim for $K_0$. Moreover, we may assume that $A,B$ and $\phi$ are unital. Indeed, as $(A_\infty)^\sim \subset (A^\sim)_\infty$ canonically, $\phi^\sim:A^\sim \to B^\sim$ is sequentially split. The assertion in the unital case yields that $K_0(\phi^\sim)$ is injective. By commutativity of the diagram
\[
\xymatrix{
K_0(A) \ar@{^(->}[r] \ar[d]_{K_0(\phi)} & K_0(A^\sim) \ar@{^(->}[d]^{K_0(\phi^\sim)} \\
K_0(B) \ar@{^(->}[r] & K_0(B^\sim)
}
\]
we get that $K_0(\phi)$ is injective. Let $A, B$ and $\phi$ be unital and let $p,q\in M_n(A)$ be projections with $K_0(\phi)([p] - [q]) = 0 \in K_0(B)$. Then 
\[
K_0(\psi) \circ K_0(\phi)([p] - [q]) = [p] - [q] = 0 \in K_0(A_\infty).
\]
By the definition of the $K_0$-group, we find $k,l\in \IN$ such that
\[
p \oplus \eins_k \oplus 0_l \sim_{\mathrm{MvN}} q \oplus \eins_k \oplus 0_l\ \text{in}\ M_n(A_\infty) \cong (M_n(A))_\infty. 
\]
Since the relation of being a partial isometry is weakly stable, this implies that
\[
p \oplus \eins_k \oplus 0_l \sim_{\mathrm{MvN}} q \oplus \eins_k \oplus 0_l \ \text{in}\ M_n(A).
\]
This shows that $[p] - [q] = 0 \in K_0(A)$, and we conclude that $K_0(\phi)$ is injective.

Injectivity of the induced map in $K$-theory with coefficients in $\IZ_n$ follows from the fact that $\varphi\otimes \id_{\CO_{n+1}}:A\otimes \CO_{n+1}\to B\otimes \CO_{n+1}$ is sequentially split by \ref{sdp1:4}, see \cite[6.4]{Schochet1984}.

\ref{sdp1:7}: Let $\tau\in T(A)$ be a tracial state on $A$ and $\omega$ a free ultrafilter on $\IN$. Consider the induced tracial state
\[
\tau_\omega : A_\infty \to \IC,\ \tau_\omega([(a_n)_n])=\lim_{n\to \omega} \tau(a_n).
\]
Then $\tau'=\tau_\omega \circ \psi$ is a tracial state on $B$ satisfying $\tau' \circ \phi = \tau$. We conclude that $T(\phi)$ is surjective. 
\end{proof}

\begin{theorem} \label{sequential dominance properties 2}
Let $A$ and $B$ be two separable \cstar-algebras. Assume that $\phi: A\to B$ is a sequentially split $*$-homomorphism. The following properties pass from $B$ to $A$:
\begin{enumerate}[label=\textup{(\arabic*)}]
\item simplicity. \label{sdp2:1}
\item nuclearity. In fact, $A$ is nuclear if $\phi$ is nuclear. \label{sdp2:2}
\item nuclear dimension at most $r\in\IN$. In fact, $\dimnuc(A)\leq\dimnuc(\phi)$. \label{sdp2:3}
\item decomposition rank at most $r\in\IN$. In fact, $\dr(A)\leq\dr(\phi)$. \label{sdp2:4}
\item absorbing a given strongly self-absorbing \cstar-algebra $\CD$. \label{sdp2:5}
\item If $A, B$ and $\phi$ are unital: being isomorphic to a given strongly self-absorbing \cstar-algebra. \label{sdp2:6}
\item If $A, B$ and $\phi$ are unital: approximate divisibility. \label{sdp2:7}
\item being purely infinite. \label{sdp2:8}
\item having almost unperforated Cuntz semigroup. \label{sdp2:9}
\item If $A, B$ and $\phi$ are unital: having strict comparison of positive elements. \label{sdp2:10}
\item real rank zero. \label{sdp2:11}
\item stable rank one or almost stable rank one in the sense of \cite[3.1]{Robert2013}. \label{sdp2:12}
\item being locally approximated by or being expressible as an inductive limit of \cstar-algebras in a class $\FC$ consisting of weakly semiprojective \cstar-algebras whose quotients can all be locally approximated by \cstar-algebras in $\FC$. \label{sdp2:13}
\item being either UHF, AF, AI, A$\IT$ or being expressible as an inductive limit of 1-NCCW complexes.
\label{sdp2:14}
\item If $A, B$ and $\phi$ are unital: being simple, exact and having tracial rank zero, tracial rank at most one or having generalized tracial rank at most one in the sense of \cite[Section 9]{GongLinNiu2015}. \label{sdp2:15}
\item stability under tensoring with the compacts $\CK$. \label{sdp2:16}
\end{enumerate}
We note that separability is only necessary in order to prove \ref{sdp2:5}, \ref{sdp2:13}, \ref{sdp2:14} and \ref{sdp2:15}. For \ref{sdp2:16}, $\sigma$-unitality is sufficient.
\end{theorem}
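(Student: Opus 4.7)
My plan is to exploit the following unifying principle: if $\psi : B \to A_\infty$ is an approximate left inverse for $\phi$, then the composite $\psi \circ \phi$ is the canonical embedding $A \hookrightarrow A_\infty$, so $A$ sits as a separable \cstar-subalgebra of $\psi(B) \subset A_\infty$. For each property in the list, the strategy is: first observe that the property (or a suitable approximate/local version of it) is inherited by $\psi(B)$ from $B$, or more precisely that $\psi$ can be used to transport witnesses of the property from $B$ to $A_\infty$; then, given a finite subset $F \fin A$ and $\varepsilon > 0$, use a standard reindexation/lifting argument (exploiting separability of $A$ and either weak semiprojectivity of the approximating algebras or stability of the defining relations) to push the approximate witness from $A_\infty$ back down into $A$. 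Item \ref{sdp2:1} is actually immediate from Theorem \ref{sequential dominance properties 1}\ref{sdp1:3} since the ideal lattice map is injective.

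The local-approximation properties \ref{sdp2:2}--\ref{sdp2:4}, \ref{sdp2:8}, \ref{sdp2:11}, \ref{sdp2:12} all fit the following template, which I illustrate for \ref{sdp2:3}. Given $F \fin A$ and $\varepsilon > 0$, apply the definition of $\dimnuc(\phi) \le r$ to obtain a c.p.c. approximation of $\phi|_F$ by a tower $A \to F' \to B$ with $F'$ finite-dimensional and $r$-colourable; composing with $\psi$ yields an $r$-colourable c.p.c. approximation of the embedding $A \hookrightarrow A_\infty$ at the set $F$, and a diagonal/$\varepsilon$-test argument in the separable sequence algebra produces the desired approximation entirely inside $A$. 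For \ref{sdp2:11} and \ref{sdp2:12} one replaces the colouring tower by the relevant approximate lifting of projections or invertibles; for \ref{sdp2:8} one uses the Cuntz-comparison characterisation and the order-embedding statement from Theorem \ref{sequential dominance properties 1}\ref{sdp1:5}, while \ref{sdp2:9} and \ref{sdp2:10} are direct consequences of \ref{sdp1:5} combined with surjectivity of $T(\phi)$ from \ref{sdp1:7}.

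The properties defined via the central sequence algebra, namely \ref{sdp2:5}--\ref{sdp2:7}, reduce directly to Proposition \ref{unital homs in FA seq split}: by Theorem \ref{strongly self-absorbing}, $\CD$-stability of $B$ is equivalent to the existence of a unital $*$-homomorphism $\CD \to F_\infty(B)$, and \ref{unital homs in FA seq split} then produces a unital $\CD \to F_\infty(A)$, whence $A$ is $\CD$-stable. The unital case of \ref{sdp2:6} follows because $K$-theory and traces are preserved by the sequential splitting (Theorem \ref{sequential dominance properties 1}\ref{sdp1:6}, \ref{sdp1:7}), and strongly self-absorbing \cstar-algebras are determined by being $\CD$-stable together with having a unital embedding into $\CD$. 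Item \ref{sdp2:7} is treated analogously by encoding approximate divisibility as the existence of unital maps $M_n \oplus M_{n+1} \to F_\infty(A)$ for each $n$. For the stability statement \ref{sdp2:16}, I would combine Proposition \ref{morita invariance FA}, which gives $F_\infty(A) \cong F_\infty(A \otimes \CK)$, with the characterisation of $\sigma$-unital stability via a suitable unital map from $\CC_0((0,1]) \otimes \CK$ (or by a direct reindexation of Hjelmborg--R{\o}rdam's criterion) into the central sequence algebra.

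The inductive-limit items \ref{sdp2:13} and \ref{sdp2:14} proceed by the same $\psi$-composition scheme as in paragraph two: local approximations of $B$ by algebras in the class $\FC$ can be pulled back through $\psi$ to approximations of $A$ inside $A_\infty$, and weak semiprojectivity of these building blocks allows those approximations to be lifted into $A$ itself; Lemma \ref{perturb weakly semiproj} then upgrades local approximation to an inductive-limit decomposition. The 1-NCCW and related classes in \ref{sdp2:14} are known to be weakly semiprojective with quotients again in the same class, so \ref{sdp2:13} applies. The main obstacle of the whole proof is \ref{sdp2:15}, generalized tracial rank at most one: its definition requires, for every finite $F \fin A$, a large-projection-plus-approximation-through-a-distinguished-building-block scheme that is simultaneously controlled in operator norm and in trace; one must carefully lift both the projection witnesses and the c.p.c.\ maps from $A_\infty$ to $A$, using weak semiprojectivity of the 1-NCCW building blocks and the fact (Theorem \ref{sequential dominance properties 1}\ref{sdp1:7}) that any trace on $A$ extends to a trace on $B$ through $\psi$, so the tracial smallness conditions verified in $B$ translate correctly to $A$. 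This is the step that will require the most care and may need an auxiliary stability lemma for the relevant building blocks along the lines of those in Gong--Lin--Niu.
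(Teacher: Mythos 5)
Your outline follows the paper's strategy for most items, but item \ref{sdp2:6} contains a genuine gap. You assert that a strongly self-absorbing \cstar-algebra is ``determined by being $\CD$-stable together with having a unital embedding into $\CD$'', and that injectivity on $K$-theory and surjectivity on traces (Theorem \ref{sequential dominance properties 1}\ref{sdp1:6}, \ref{sdp1:7}) close the argument. This principle is false: for $\CD=\CO_2$, the algebra $\CC(X)\otimes\CO_2$ (with $X$ a nontrivial compact metric space) is unital, separable, $\CO_2$-stable and embeds unitally into $\CO_2$, yet it is not isomorphic to $\CO_2$; moreover both algebras have vanishing $K$-theory and empty trace simplex, so the invariant conditions you invoke cannot repair the argument. (Such an $A$ is of course not sequentially dominated by $\CO_2$, since simplicity would pass by \ref{sdp2:1} --- but that only shows your proposed proof does not use the full strength of the hypothesis.) What is actually needed, and what the paper does, is: (a) transfer the approximately inner half-flip from $B$ to $A$, using that $\phi\otimes\phi$ is sequentially split by \ref{sequential dominance properties 1}\ref{sdp1:4} together with lifting unitaries from $(A\otimes A)_\infty$; (b) embed $A$ unitally into $B_\infty\cap B'$ by composing $\phi$ with a unital embedding of $B$ into its own central sequence algebra, so that \cite[7.2.2]{Rordam2001} yields $B\cong B\otimes A$; and (c) combine this with \ref{sdp2:5}, which gives $A\cong A\otimes B$, to conclude $A\cong A\otimes B\cong B\otimes A\cong B$.

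A few smaller points. For \ref{sdp2:8}, pure infiniteness in the non-simple sense of \cite[1.3]{Kirchberg2000} also demands the absence of characters, which has to be transferred separately (compose a character on $A_\infty$ with $\psi$); your route through the order embedding \ref{sequential dominance properties 1}\ref{sdp1:5} is fine only if by ``Cuntz-comparison characterisation'' you mean that every positive element is properly infinite. For \ref{sdp2:10}, the relevant mechanism is pulling back (quasi)traces along $\phi$ combined with the order embedding, not surjectivity of $T(\phi)$; note also that strict comparison is formulated via $2$-quasitraces, which the paper sidesteps by deducing \ref{sdp2:10} from almost unperforation in \ref{sdp2:9}. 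For \ref{sdp2:16}, your primary suggestion via $F_\infty(A)\cong F_\infty(A\otimes\CK)$ does not give a proof, but your fallback (reindexing the Hjelmborg--R{\o}rdam criterion through $\psi$) is exactly the paper's argument. Apart from these points, your treatment of \ref{sdp2:1}--\ref{sdp2:5}, \ref{sdp2:7}, \ref{sdp2:9} and \ref{sdp2:11}--\ref{sdp2:15} matches the paper's approach, with \ref{sdp2:15} correctly identified as the step needing the trace-transfer and weak semiprojectivity arguments spelled out in detail there.
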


\begin{proof}
For what follows, $\psi:B\to A_\infty$ is an approximate left-inverse for $\phi$.

\ref{sdp2:1}: This is an immediate consequence of \ref{sequential dominance properties 1}\ref{sdp1:3}.

\ref{sdp2:2}: It is a well-known consequence of the Choi-Effros lifting theorem \cite{ChoiEffros1976} that a \cstar-algebra is nuclear if and only if its standard embedding from $A$ into $A_\infty$ is nuclear.
Now if $\phi: A\to B$ is both nuclear and sequentially split, then this implies that the standard embedding from $A$ into $A_\infty$ is also necessarily nuclear.
  
\ref{sdp2:3} and \ref{sdp2:4}: It is well-known that the nuclear dimension of a \cstar-algebra is identical to the nuclear dimension of its standard embedding into its sequence algebra, see \cite[2.5]{TikuisisWinter2014}. The same is true for decomposition rank. Thus the same argument as in \ref{sdp2:2} holds as the standard embedding of $A$ factorizes through $\phi$, and thus the claim follows.

\ref{sdp2:5}: Assume that $B\cong B\otimes \CD$. As pointed out in \ref{strongly self-absorbing}, this is equivalent to the first factor embedding $B\into B\otimes \CD$ being  sequentially split. We obtain a commutative diagram
\[
	\xymatrix{
	A \ar[rrrr] \ar[rd]_{\id_A\otimes \eins} \ar[rrd]^\phi & & & & (A_\infty)_\infty \\
	 & A\otimes \CD  \ar[rd]_{\phi\otimes\id_\CD} \ar@{-->}[rrru]& B \ar[r] \ar[d]_/-0.3cm/{\id_B\otimes \eins} & B_\infty \ar[ur] & \\
	 & & B\otimes\CD \ar[ru] & &
	}
\]
An application of \ref{weak-seq-split} now yields that $\id_A\otimes \eins:A\into A\otimes \CD$ is sequentially split. Hence, $A\cong A\otimes \CD$ by \ref{strongly self-absorbing}.

\ref{sdp2:6}: Suppose that $A, B$ and $\phi$ are unital. Assume that $B$ is strongly self-absorbing. In particular, $B$ has approximately inner half-flip by \cite[1.5]{TomsWinter2007}. Let $u_n\in \CU(B\otimes B)$ be a sequence of unitaries with $u_n(b\otimes\eins)u_n^*\to\eins\otimes b$ for all $b\in B$.
As $\phi\otimes \phi: A\otimes A\to B\otimes B$ is sequentially split by \ref{sequential dominance properties 1}\ref{sdp1:4}, choose an approximate left-inverse $\eta: B\otimes B\to (A\otimes A)_\infty$ for $\phi \otimes \phi$. The sequence of unitaries $v_n=\eta(u_n)\in (A\otimes A)_\infty$ then satisfies $v_n(a\otimes\eins)v_n^*\to\eins\otimes a$ for all $a\in A$. After lifting each $v_n$ to a sequence of unitaries, a simple diagonal sequence argument now shows that $A$ also has approximately inner half-flip.
 
As $B$ is strongly self-absorbing and admits a unital embedding into its own central sequence algebra, we can also embed $A$ unitally into the central sequence algebra of $B$. So \cite[7.2.2]{Rordam2001} implies that $B$ absorbs $A$ tensorially. On the other hand, we can use \ref{sdp2:5} to see that $A$ absorbs $B$ tensorially. We conclude that $A$ and $B$ are isomorphic.

\ref{sdp2:7}: Suppose that $A, B$ and $\phi$ are unital. If $B$ is approximately divisible, then by definition, there exists a unital $*$-homomorphism $M_2\oplus M_3\to B_\infty\cap B'$. By \ref{unital homs in FA seq split}, we obtain a unital $*$-homomorphism $M_2\oplus M_3\to A_\infty\cap A'$, which implies that $A$ is approximately divisible.

\ref{sdp2:8}: Assume that $B$ is purely infinite (see \cite[1.3]{Kirchberg2000} for a definition). Let $a\in A$ be a positive element and let $a' \in A$ be another positive element contained in the ideal generated by $a$. Since $B$ is purely infinite, there exist $b_n \in B$, $n \in \IN$, such that $\lim_{n\to \infty} b_n \phi(a) b_n^* = \phi(a')$. Applying $\psi:B\to A_\infty$ to both sides of the equality, we get that $\lim_{n\to\infty} \psi(b_n) a \psi(b_n)^* = a'$ in $A_\infty$. A simple diagonal sequence argument yields elements $c_n \in A$, $n \in \IN$, such that $\lim_{n\to\infty} c_n ac_n^*=a'$.

Assume that $A$ admits a character $\chi:A\to \IC$. Let $\omega$ be a free ultrafilter on $\IN$. Then
\[
\chi_\omega: A_\infty \to \IC,\ \chi_\omega([(a_n)_n])=\lim_{n\to\omega} \chi(a_n),
\]
defines a character on $A_\infty$ satisfying $\chi_\omega \circ \psi \circ \phi = \chi$. This implies that $\chi_\omega\circ \psi: B\to \IC$ is a character, which contradicts the fact that $B$ is purely infinite. We have proven \ref{sdp2:8}.

\ref{sdp2:9}: Let $x,y\in \Cu(A)$, $n\in \IN$ with $(n+1)x\leq ny$. Then $(n+1)\Cu(\phi)(x) \leq n\Cu(\phi)(y)$. As $\Cu(B)$ is almost unperforated (see \cite[3.1]{Rordam2004} for a definition), $\Cu(\phi)(x) \leq \Cu(\phi)(y)$. As $\Cu(\phi)$ is an order embedding by \ref{sequential dominance properties 1}\ref{sdp1:5}, we conclude that $x \leq y$. This shows that $\Cu(A)$ is almost unperforated. 

\ref{sdp2:10}: It follows from \cite[3.2]{Rordam2004} and \cite[5.7]{AraPereraToms2011} that a \cstar-algebra $C$ has strict comparison (see \cite[7.6.4]{AntoinePereraThiel2014} for a definition) if and only if the uncompleted Cuntz semigroup $W(C)$ is almost unperforated. Moreover, for any \cstar-algebra, the uncompleted Cuntz semigroup is almost unperforated if and only if the Cuntz semigroup is unperforated, see \cite[7.6.4]{AntoinePereraThiel2014}. The claim now follows from \ref{sdp2:9}.

\ref{sdp2:11}: Let $a\in A$ be a self-adjoint element. If $B$ has real rank zero, then there are mutually orthogonal projections $p_1,\ldots,p_n$ and $\lambda_1,\ldots,\lambda_n\in \IR$ such that
\[
\phi(a)=_\eps \sum\limits_{j=1}^n \lambda_j p_j.
\]
Applying $\psi:B\to A_\infty$, we obtain
\[
a=_\eps \sum\limits_{j=1}^n \lambda_j \psi(p_j).
\]
Given $m\in \IN$, a diagonal sequence argument yields positive elements $x_1,\ldots,x_n\in A$ satisfying
\[
a=_{2\eps} \sum\limits_{j=1}^n \lambda_j x_j, \quad x_k^2=_{\frac{1}{m}} x_k\quad \text{and}\quad x_kx_l =_{\frac{1}{m}} 0
\]
for $1\leq k,l\leq n$ with $k\neq l$. Since $\IC^n$ is weakly semiprojective, we find mutually orthogonal projection $q_1,\ldots, q_n\in A$ satisfying
\[
a=_{3\eps} \sum\limits_{j=1}^n \lambda_j q_j.
\]
Hence, $A$ has real rank zero.

\ref{sdp2:12}: Assume that $B$ has stable rank one. Since $(A_\infty)^\sim\subset (A^\sim)_\infty$ canonically, the map $\phi^\sim:A^\sim\to B^\sim$ is sequentially split. Given $a\in A^\sim$, we find some invertible element $b\in B^\sim$ such that $\phi^\sim(a)=_\eps b$. Then $\psi^\sim(b)\in (A^\sim)_\infty$ is an invertible element satisfying $a=_\eps \psi^\sim(b)$. As any invertible element in any sequence algebra lifts to a sequence of invertible elements, we can represent $\psi^\sim(b)$ by a bounded sequence of invertible elements in $A^\sim$. Picking a suitable member of this sequence yields some invertible $c\in A^\sim$ with $a=_{2\eps}c$. Since $\eps>0$ was arbitrary, this shows that $A$ has stable rank one.

Now assume that $B$ has almost stable rank one in the sense of Robert, see \cite[3.1]{Robert2013}. Then the identical argument as above yields that any $a\in A$ can be approximated by invertibles in $A^\sim$. Moreover, by \ref{sequential dominance properties 1}\ref{sdp1:1}, the same holds if we replace $A$ by some hereditary subalgebra. This shows that $A$ has almost stable rank one.

\ref{sdp2:13}: Assume that $B$ is locally approximated by \cstar-algebras in $\FC$. Let $F\fin A$ be a finite subset and $\eps>0$. Find some \cstar-subalgebra $C \subset B$ such that $C \in \FC$ and $\varphi(F)\subset_\eps C$. Then $F\subset_\eps \psi(C)$ in $A_\infty$. As $C$ is weakly semiprojective, $\psi|_{C}:C \to A_\infty$ lifts to a $*$-homomorphism $C\to \ell^\infty(\IN,A)$. Composing it with the canonical projection onto a suitable coordinate $\ell^\infty(\IN,A)\to A$, we obtain a $*$-homomorphism $\kappa:C\to A$ satisfying $F\subset_{2\eps} \kappa(C)$. As $\kappa(C)$ can be locally approximated by \cstar-algebras in $\FC$, we find a \cstar-subalgebra $D\subset \kappa(C)\subset A$ such that $D \in \FC$ and $F\subset_{3\eps}D$. Hence, $A$ can be locally approximated by \cstar-algebras in $\FC$. The corresponding statement about expressing $A$ as an inductive limit now follows directly from \ref{perturb weakly semiproj}.

\ref{sdp2:14}: 
Recall that 1-NCCW complexes are semiprojective, see \cite[6.2.2]{EilersLoringPedersen1998} and also \cite[3.4]{Enders2014}. The same is true for finite dimensional \cstar-algebras. Applying \ref{sdp2:13} to the class $\FC$ of all matrix algebras, we therefore conclude that the property of being UHF passes from $B$ to $A$. Similarly, if $\FC$ is the class of all finite dimensional \cstar-algebras, we get that $A$ is an AF-algebra if this is true for $B$.

Assume now that $B$ is an A$\IT$-algebra. Let $\FC$ be the class of all \cstar-algebras of the form $\CF_1 \oplus \CF_2 \otimes \CC(\IT)$, where $\CF_1$ and $\CF_2$ are finite-dimensional. Every quotient of a circle algebra can be locally approximated by \cstar-algebras in $\FC$. Hence, $A$ can be written as an inductive limit of \cstar-algebras in $\FC$ by \ref{sdp2:13}. As every \cstar-algebra in $\FC$ is also a quotient of a circle algebra, it follows from \cite[3.2.3]{Rordam2001} that $A$ is A$\IT$.

If $B$ is an AI-algebra then it is also an A$\IT$-algebra. Indeed, AI-algebras are exactly the A$\IT$-algebras with trivial $K_1$-group, see \cite[3.2.17]{Rordam2001}. By \ref{sequential dominance properties 1}\ref{sdp1:6}, we get that $K_1(A) = 0$ and we conclude that $A$ is an AI-algebra. 

Lastly, assume that $B$ is expressible as an inductive limit of 1-NCCW complexes. Basically the same proof as in \cite[3.20]{GongLinNiu2015} shows that every quotient of a 1-NCCW complex can be locally approximated by \cstar-algebras of the form $C_1 \oplus C_2$, where $C_1$ is finite dimensional and $C_2$ is a 1-NCCW complex. Every \cstar-algebra of this form is the image of a split surjection starting from some 1-NCCW complex. We therefore conclude that $A$ is expressible as an inductive limit of 1-NCCW complexes. This shows \ref{sdp2:14}.

\ref{sdp2:15}: Let $\CS$ be either one of the following classes of \cstar-algebras:
\begin{itemize}
\item all finite-dimensional \cstar-algebras;
\item all \cstar-algebras isomorphic to $\CF_1\oplus\CF_2\otimes\CC[0,1]$, where $\CF_1$ and $\CF_2$ are finite-dimensional;
\item All unital 1-NCCW-complexes (also known as Elliott-Thomson building blocks \cite[Section 3]{GongLinNiu2015}) and all finite-dimensional \cstar-algebras.
\end{itemize}
Assume now that $B$ is simple, exact and $B\in \mathrm{TA}\CS$, see \cite[9.4]{GongLinNiu2015}. Then $A$ is simple by \ref{sdp2:1}, and moreover exact because $\phi$ is injective. This implies that $T(A)=QT(A)$ by Haagerup's theorem \cite{Haagerup2014}. Let us show that also $A\in\mathrm{TA}\CS$. We note (as in the proof of \ref{sdp2:14}) that the class $\CS$ consists of weakly semiprojective \cstar-algebras, and that quotients of \cstar-algebras in $\CS$ can be locally approximated by \cstar-algebras in $\CS$. By \cite[9.11]{GongLinNiu2015}, $B$ has strict comparison of positive elements, and so has $A$ by \ref{sdp2:10}. In particular, it suffices to show (cf.~\cite[6.15]{Lin2001}) that for every $\eps>0$ and $F\fin A$, there exists $C\subset A$ with $C\in\CS$ such that
\begin{itemize}
\item $\|[\eins_C,x]\|\leq\eps$;
\item $\dist(\eins_C x \eins_C, C)\leq\eps$;
\item $\tau(\eins_C)\geq 1-\eps$
\end{itemize}
for all $x\in F$ and $\tau\in T(A)$.
Since $B\in\mathrm{TA}\CS$, it follows that we can find $C_1\subset B$ with $C_1\in\CS$ such that
\begin{itemize}
\item $\|[\eins_{C_1},\phi(x)]\|\leq\eps$;
\item $\dist(\eins_{C_1} \phi(x) \eins_{C_1}, C_1)\leq\eps$;
\item $\tau(\eins_{C_1})\geq 1-\eps$
\end{itemize}
for all $x\in F$ and $\tau\in T(B)$. Consider the restriction $\psi|_{C_1}: C_1\to A_\infty$, and use weak semiprojectivity to lift this to a sequence of $*$-homomorphisms $\kappa_n: C_1\to A$. As $\psi\circ\phi$ coincides with the standard embedding of $A$ into $A_\infty$ we obtain
\begin{itemize}
\item $\dst \limsup_{n\to\infty} \|[\kappa_n(\eins_{C_1}),x]\|\leq\eps$;
\item $\dst \limsup_{n\to\infty} \dist(\kappa_n(\eins_{C_1}) x \kappa_n(\eins_{C_1}), \kappa_n(C_1) )\leq\eps$;
\item $\dst \liminf_{n\to\infty} \min_{\tau\in T(A)}\tau(\kappa_n(\eins_{C_1}))\geq 1-\eps$
\end{itemize}
for all $x\in F$ \footnote{Note that more generally, if $a\in A_\infty$ is a positive contraction satisfying $\tau(a)\geq c\geq 0$ for all $\tau\in T(A_\infty)$, then any representing sequence $(a_n)_n$ of positive contractions satisfies $\dst\liminf_{n\to\infty}~\min_{\tau\in T(A)} \tau(a_n) \geq c$.}.
In particular, we can pick a member of this sequence $\kappa: C_1\to A$ of $*$-homomorphisms satisfying
\begin{itemize}
\item $\|[\kappa(\eins_{C_1}),x]\|\leq 2\eps$;
\item $\dst \dist(\kappa(\eins_{C_1}) x \kappa(\eins_{C_1}), \kappa(C_1) )\leq2\eps$;
\item $\tau(\kappa(\eins_{C_1}))\geq 1-2\eps$
\end{itemize}
for all $x\in F$ and $\tau\in T(A)$. Using the previously mentioned fact that quotients of \cstar-algebras in $\CS$ are locally approximated by \cstar-algebras in $\CS$, we can find $C\in\CS$ with $C\subset\kappa(C_1)$ unitally, satisfying
\begin{itemize}
\item $\|[\eins_{C},x]\|\leq 2\eps$;
\item $\dst \dist( \eins_{C} x \eins_{C}, C )\leq 3\eps$;
\item $\tau(\eins_{C})\geq 1-2\eps$
\end{itemize}
for all $x\in F$ and $\tau\in T(A)$. Since $\eps>0$ and $F\fin A$ were arbitrary, this shows that indeed $A\in\mathrm{TA}\CS$.

\ref{sdp2:16}: By \cite[2.1 and 2.2]{HjelmborgRordam1998}, a $\sigma$-unital \cstar-algebra $C$ is stable if and only if for each positive $c\in C$ and $\eps>0$, there exists some $d\in C$ such that $d^*d=_\eps c$ and $\| d^*ddd^*\| \leq \eps$. Assume that $B$ is stable and let $a\in A$ be positive. Then there exists some $x\in B$ such that $x^*x=_\eps \phi(a)$ and $\| x^*xxx^* \| \leq \eps$. If $y = \psi(x) \in A_\infty$, we therefore get that $y^*y=_\eps a$ and $\| y^*yyy^*\| \leq \eps$. Picking a suitable member $z \in A$ of a representing sequence for $y$, we can arrange that $z^*z=_{2\eps} a$ and $\|z^*zzz^*\|\leq 2\eps$. We conclude that $A$ is stable.
\end{proof}

Somewhat less obvious than most of the properties listed in \ref{sequential dominance properties 2}, it turns out that the UCT together with nuclearity is inherited under sequential dominance as well. We note that the key arguments in the proof below are a combination of a nearly identical argument due to  Kirchberg in \cite{Kirchberg}, where he reduces the UCT problem to the purely infinite setting, and a nearly identical argument due to Dadarlat in \cite{Dadarlat2009}, where he gives a simplified proof of a special case of his theorem \cite{Dadarlat2003} that the UCT is a local property.

\begin{theorem} \label{UCT}
Let $A$ and $B$ be separable \cstar-algebras. Assume that $A$ is sequentially dominated by $B$. If $B$ is nuclear and satisfies the UCT, then so does $A$.
\end{theorem}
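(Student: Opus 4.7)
Nuclearity of $A$ is immediate from Theorem \ref{sequential dominance properties 2}\ref{sdp2:2}, since any $*$-homomorphism into the nuclear \cstar-algebra $B$ is automatically nuclear, so $\phi$ is nuclear. It remains to prove that $A$ satisfies the UCT, and this is where the real work lies.

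My first step would be Kirchberg's reduction to the $\CO_\infty$-stable setting. Namely, $\CO_\infty$ is a strongly self-absorbing \cstar-algebra that satisfies the UCT, and the first-factor embedding $A \to A \otimes \CO_\infty$ is a $KK$-equivalence; in particular, $A$ satisfies the UCT if and only if $A \otimes \CO_\infty$ does, and the same for $B$. By Theorem \ref{sequential dominance properties 1}\ref{sdp1:4}, the $*$-homomorphism $\phi \otimes \id_{\CO_\infty} : A \otimes \CO_\infty \to B \otimes \CO_\infty$ is still sequentially split. Replacing $A$, $B$, and $\phi$ by their $\CO_\infty$-stabilizations, we may therefore assume without loss of generality that both $A$ and $B$ are $\CO_\infty$-stable (and still separable, nuclear, with $B$ satisfying the UCT).

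My second step would invoke a local-approximation characterization of the UCT in the spirit of Dadarlat's work \cite{Dadarlat2003, Dadarlat2009}. The precise statement I have in mind is that a separable nuclear \cstar-algebra $A$ satisfies the UCT provided that, for every finite set $F \fin A$ and every $\eps>0$, there exist a separable nuclear UCT \cstar-algebra $C$ and a pair of nuclear (possibly only asymptotically multiplicative, completely positive, contractive) maps $\alpha : A \to C$ and $\beta : C \to A$ such that $\beta\circ\alpha(a) =_\eps a$ for every $a \in F$. In our situation, take $C = B$ and $\alpha = \phi$. Using the Choi--Effros lifting theorem applied to the nuclear $*$-homomorphism $\psi : B \to A_\infty$, together with the separability of $B$ and a standard diagonal-and-reindexation argument as in the proof of \ref{weak-seq-split}, one produces a sequence of c.p.c.\ maps $\psi_n : B \to A$ that are asymptotically multiplicative on every finite subset of $B$ and satisfy $\psi_n \circ \phi(a) \to a$ in norm for each $a \in A$. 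For given $F \fin A$ and $\eps>0$, picking $\beta = \psi_n$ for $n$ large enough supplies the required local data, and Dadarlat's local UCT criterion completes the proof.

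The main obstacle, I think, is to formulate and verify the exact version of the local UCT criterion that the sequentially split setup produces. One has to be careful that the maps $\psi_n$ one extracts are genuinely nuclear and sufficiently close to being $*$-homomorphisms to feed into the Dadarlat-style argument; and since the reduction in the first step lands us in the non-simple $\CO_\infty$-stable setting (rather than the strict Kirchberg-algebra setting), one must check that the local UCT characterization of \cite{Dadarlat2009} is robust enough to apply. Once these points are pinned down, the combination of Kirchberg's tensor-absorption trick and Dadarlat's local principle yields the result.
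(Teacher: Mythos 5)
There is a genuine gap. Your first step (nuclearity via \ref{sequential dominance properties 2}\ref{sdp2:2}, then tensoring with $\CO_\infty$ using \ref{sequential dominance properties 1}\ref{sdp1:4} and $KK$-equivalence) is fine and parallels the paper's reduction to the $\Ost$-absorbing case. But the entire weight of your argument rests on the ``local UCT criterion'' you formulate in the second step, and that criterion is not an established theorem. Dadarlat's result that the UCT is a local property concerns genuine nuclear UCT \emph{\cstar-subalgebras} of $A$ (equivalently, honest $*$-homomorphisms with controlled images), not approximate factorizations of the identity through external UCT algebras by asymptotically multiplicative c.p.c.\ maps. Your proposed criterion is at least as strong as the statement being proved (a sequentially split $\phi$ together with Choi--Effros lifts of $\psi$ produces exactly such data), so invoking it essentially begs the question. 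Note also that one cannot repair this by passing to $KK$: a $*$-homomorphism $\psi:B\to A_\infty$ with $\psi\circ\phi=\iota_A$ does \emph{not} yield a class in $KK(B,A)$ splitting $KK(\phi)$, because $KK$ does not commute with sequence algebras -- this is precisely why the paper only obtains injectivity of $K_*(\phi)$ via weak stability of projections/partial isometries rather than a $KK$-retract, and why UCT-dominance cannot be deduced from abstract $KK$-dominance here.

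What the paper does instead, and what is missing from your proposal, is a mechanism for converting the approximate left-inverse into \emph{genuine} $*$-homomorphisms into $A$: weak semiprojectivity of the building blocks of $B$. Since a general $\CO_\infty$-stable nuclear UCT algebra has no such decomposition available, the paper first applies Kirchberg's trick, constructing simple purely infinite models $A^\sharp$ and $B^\sharp$ (stationary inductive limits along endomorphisms factoring through $\CO_2$) that are $KK$-equivalent to $A$ and $B$, and -- crucially -- constructing them compatibly with $\phi$ so that \ref{inductive limits} shows $\phi^\sharp:A^\sharp\to B^\sharp$ is again sequentially split. Then $B^\sharp$ is a UCT Kirchberg algebra, hence an inductive limit of weakly semiprojective UCT Kirchberg algebras with finitely generated $K$-theory, and \ref{sequential dominance properties 2}\ref{sdp2:13} (whose proof uses weak semiprojectivity to lift $\psi|_C$ to honest maps $C\to A$, and simplicity to handle quotients) gives the same structure for $A^\sharp$, whence the UCT for $A^\sharp$ and thus for $A$. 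Without this reduction to weakly semiprojective, simple building blocks, your asymptotically multiplicative c.p.c.\ maps $\psi_n:B\to A$ cannot be fed into any known UCT transfer result; the obstacle you flag at the end is precisely the unproved core of the argument.
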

\begin{proof}
Assume that $B$ is nuclear and satisfies the UCT. Let $\phi:A\to B$ be a sequentially split $*$-homomorphism. By \ref{sequential dominance properties 2}\ref{sdp2:2}, we already know that $A$ is nuclear. As $\phi^\sim:A^\sim\to B^\sim$ is also sequentially split, we may assume that $A,B$ and $\phi$ were unital to begin with. By passing to the sequentially split (see \ref{sequential dominance properties 1}\ref{sdp1:4}) $*$-homomorphism $\phi\otimes \id_{\Ost}:A\otimes \Ost\to B\otimes \Ost$, we may also assume that $A$ and $B$ are $\Ost$-absorbing, since $\Ost$ is $KK$-equivalent to $\IC$. As in \cite[Theorem I]{Kirchberg} or \cite[4.17]{BarlakSzabo2014}, we construct unital Kirchberg algebras $A^\sharp$ and $B^\sharp$, which are $KK$-equivalent to $A$ and $B$, respectively. Since we need this construction to be compatible with the unital injective $*$-homomorphism $\phi:A\to B$, we recall it here, see also \cite[proof of 5.2]{Dadarlat2009}. By \cite{KirchbergPhillips2000}, there exists a unital embedding $\kappa:B\into \CO_2$. As it is well-known that $\CO_2$ embeds into $\Ost$ unitally, we find a unital embedding $\iota:\CO_2\into A$, and write $s_1,s_2\in A$ for the images of the canonical generators of $\CO_2$. Define a $*$-endomorphism
\[
\psi_A : A\to A,\ \psi_A(x)=s_1 x s_1^* + s_2 (\iota\circ\kappa\circ \phi)(x) s_2^*.
\]
Clearly, $\phi\circ \iota :\CO_2\into B$ is a unital embedding, and we define a $*$-endomorphism
\[
\psi_B : B\to B,\ \psi_B(x)=\phi(s_1) x \phi(s_1)^* + \phi(s_2) (\phi\circ\iota\circ\kappa)(x) \phi(s_2)^*.
\]
Let $A^\sharp=\dst\lim_{\longrightarrow}\set{A,\psi_A}$ and $B^\sharp=\dst\lim_{\longrightarrow}\set{B,\psi_B}$ denote the corresponding stationary inductive limits. Clearly, $A^\sharp$ and $B^\sharp$ are again separable, unital, nuclear and $\CO_\infty$-absorbing \cstar-algebras. Since for all $x\neq 0$, the element
\[
s_2^*\psi_A(x)s_2 = \iota\circ\kappa\circ\phi(x)
\]
is the image of the full element $\kappa\circ\phi(x)\in\CO_2$, it follows that $\psi_A(x)$ is also full. Hence $A^\sharp$ is simple. The same argument for $B$ shows that $B^\sharp$ is simple.

It is immediate that $KK(\psi_A)=\eins+KK(\iota\circ\kappa\circ\phi)=\eins$, since $\iota\circ\kappa\circ\phi$ factors through $\CO_2$. In particular, the connecting maps of this inductive system induce $KK$-equivalences. Hence it follows that the canonical embedding $\psi_{A,\infty}: A\to A^\sharp$ induces a $KK$-equivalence. This is implied by \cite[2.4]{Dadarlat2009}, which basically boils down to plugging in the Milnor sequence \cite[21.5.2]{Blackadar1998} for the functor $KK(~\_~,B)$ in this situation. By a similar argument, we also get that the canonical embedding $\psi_{B,\infty}: B\to B^\sharp$ is a $KK$-equivalence. We conclude that $A^\sharp$ and $B^\sharp$ are unital Kirchberg algebras $KK$-equivalent to $A$ and $B$, respectively.

Hence it suffices to show that $A^\sharp$ satisfies the UCT.
Note that by construction, the following diagram commutes and thus induces a $*$-homomorphism $\phi^\sharp$ via
\[
	\xymatrix{
	A \ar[r]^{\psi_A} \ar[d]^\phi & A \ar[r] \ar[d]^\phi & \cdots \ar[r] & A^\sharp \ar@{-->}[d]^{\phi^\sharp}\\
	B \ar[r]^{\psi_B} & B \ar[r] & \cdots \ar[r] & B^\sharp
	}
\]
By \ref{inductive limits}, it follows that $\phi^\sharp: A^\sharp\to B^\sharp$ is sequentially split.
Since $B^\sharp$ is a UCT Kirchberg algebra, it is expressible as an inductive limit of UCT Kirchberg algebras with finitely generated  $K$-theory, see \cite[8.4.13]{Rordam2001}. These \cstar-algebras are known to be weakly semiprojective, see \cite{Lin2007, Spielberg2007}. Using \ref{sequential dominance properties 2}\ref{sdp2:13}, we therefore conclude that $A^\sharp$ is expressible as an inductive limit of UCT Kirchberg algebras with finitely generated $K$-theory. This implies that $A^\sharp$ indeed satisfies the UCT, which finishes the proof.
\end{proof}

%%%%%%%%%%%%%%%%%%%%%%%%%%%%%%%%%%%%%%%%%%%%%%%%%%%%%%%%%%%%%%%%%%%%%%%%%%%%%%%%%%%%%%%%%%%%%

\section{Sequentially split homomorphisms: The equivariant case}

\begin{nota}
Let $G$ be a locally compact group, $A$ a \cstar-algebra and $\alpha:G\curvearrowright A$ a point-norm continuous action.  Componentwise application of $\set{\alpha_g}_{g\in G}$ on representative sequences yields a (discrete) $G$-action $\alpha_\infty$ on $A_\infty$. If $B\subset A_\infty$ is a (globally) $\alpha_\infty$-invariant \cstar-subalgebra, then we get induced actions $\tilde{\alpha}_\infty$ on $F(B,A_\infty)$ and also $\CM(D_{B,A_\infty})$.\footnote{This notation makes sense because if $B$ is $\sigma$-unital, then \ref{surjection normalizer multiplier}\ref{snm:2} implies that the induced action on $F(B,A_\infty)$ may be viewed as a restriction of the action induced on $\CM(D_{B,A_\infty})$.}
These actions are in general not continuous. However, we may restrict to the continuous parts of these actions, for instance for $\alpha_\infty$ on $A_\infty$ we consider
\[
A_{\infty,\alpha} = \set{ x\in A_\infty ~|~ [g\mapsto\alpha_{\infty,g}(x)]~\text{is continuous} }.
\]
In this way, we obtain \cstar-dynamical systems $(A_{\infty,\alpha}, \alpha_\infty)$, $(\CM_{\alpha}(D_{B,A_\infty}),\tilde{\alpha}_\infty)$ and $(F_\alpha(B,A_\infty), \tilde{\alpha}_\infty)$. 
For brevity, we denote $F_\alpha(A,A_\infty) = F_{\infty,\alpha}(A)$.
\end{nota}

\begin{rem} \label{multiplication FA continuous}
The natural $*$-homomorphism in \ref{multiplication FA} restricts to a $*$-homomor\-phism
\[
F_{\infty,\alpha}(A)\otimes_{\operatorname{max}} A \to A_{\infty,\alpha},\ (b+\ann(A,A_\infty))\otimes a \mapsto ba,
\]
which clearly is $\tilde{\alpha}_\infty\otimes \alpha$-to-$\alpha_\infty$-equivariant. Observe that this $*$-homomorphism indeed maps into $A_{\infty,\alpha}$, since the action on the left-hand side is point-norm continuous.
\end{rem}

\begin{defi} Let $A$ and $B$ be separable \cstar-algebras and $G$ a locally compact group. Let $\alpha:G\curvearrowright A$ and $\beta:G\curvearrowright B$ be two continuous actions. An equivariant $*$-homomorphism $\phi:(A,\alpha)\to (B,\beta)$ is called ($G$-)(equivariantly) sequentially split, if there exists a commutative diagram of equivariant $*$-homomorphisms of the form
\[
\xymatrix{
(A,\alpha) \ar[dr]_\phi \ar[rr] && (A_{\infty,\alpha},\alpha_\infty) \\
& (B,\beta) \ar[ur] &
}
\]
where the horizontal map is the canonical inclusion. If $\psi:(B,\beta) \to (A_{\infty,\alpha},\alpha_\infty)$ is an equivariant $*$-homomorphism fitting into the above diagram, then we say that $\psi$ is an equivariant approximate left-inverse for $\phi$.
\end{defi}

Similarly as for path algebras in \cite[1.8]{GuentnerHigsonTrout2000}, it turns out that continuous elements in the sequence algebra have a particularly strong continuity property also on the level of their representatives. We note that the technical proof appearing in the initial preprint version of this paper turned out to be redundant upon discovering a much more general result due to Brown \cite{Brown00}.

\begin{lemma}[cf.\ {\cite[Theorem 2]{Brown00}}] \label{continuous lifts}
Let $G$ be a locally compact group, $A$ a \cstar-algebra and $\alpha: G \curvearrowright A$ a continuous action. Let $x \in A_{\infty}$ and $(x_n)_n \in \ell^\infty(\IN,A)$ a representing sequence. Then $x\in A_{\infty,\alpha}$ if and only if $(x_n)_n$ is a continuous element with regard to the action induced on $\ell^\infty(\IN,A)$, i.e.\ the map $g\mapsto\big( \alpha_g(x_n) \big)_n\in\ell^\infty(\IN,A)$ is continuous. In particular, if $x\in A_{\infty,\alpha}$, then the following holds: For any $g_0 \in  G$ and $\delta > 0$, there exists a neighbourhood $U$ of $g_0$ such that
\[
\sup_{k \in \IN}~ \sup_{g\in U} \| \alpha_g(x_k) - \alpha_{g_0}(x_k) \| \leq \delta.
\]
\end{lemma}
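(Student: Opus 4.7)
The plan is to reduce the lemma to Brown's theorem on continuous parts of quotient \cstar-dynamical systems, viewing $A_\infty$ as the quotient of $\ell^\infty(\IN, A)$ by the ideal $c_\infty(\IN, A)$, both carrying the componentwise $G$-action that is generally discontinuous on the ambient algebra but makes the quotient map equivariant.

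The ``if'' direction is easy to dispatch: if $(x_n)_n$ is continuous in $\ell^\infty(\IN, A)$ under the induced action, then equivariance and contractivity of the quotient map yield
\[
\|\alpha_{\infty,g}(x) - \alpha_{\infty,g_0}(x)\| \leq \sup_{n\in\IN} \|\alpha_g(x_n) - \alpha_{g_0}(x_n)\|,
\]
so $g\mapsto\alpha_{\infty,g}(x)$ is continuous and $x\in A_{\infty,\alpha}$. The ``in particular'' clause at the end of the lemma is likewise immediate once the equivalence is established, being a literal unwinding of what norm-continuity of $g\mapsto(\alpha_g(x_n))_n\in\ell^\infty(\IN,A)$ at $g_0$ means.

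The key preliminary observation for the converse is that the null ideal $c_\infty(\IN, A)$ is entirely contained in the continuous part of $\ell^\infty(\IN, A)$. I would prove this by a direct splitting argument: given $(a_n)_n$ with $\|a_n\|\to 0$ and $\delta>0$, pick $N$ large enough that $\|a_n\|\leq\delta/4$ for $n\geq N$, so that the tail is controlled uniformly via the trivial bound $\|\alpha_g(a_n)-a_n\|\leq 2\|a_n\|$; for the finitely many indices $n<N$, the continuity of $\alpha$ on each $a_n$ provides a neighbourhood $U_n$ of the identity, and the finite intersection $U = \bigcap_{n<N} U_n$ then works uniformly in $n$.

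With this in hand, the ``only if'' direction is handed off to Brown's theorem \cite[Theorem 2]{Brown00}, which asserts that for any continuous action of a locally compact group on a \cstar-algebra $B$ and any invariant closed ideal $I\triangleleft B$, the quotient map $B\to B/I$ restricts to a surjection from the continuous part of $B$ onto the continuous part of $B/I$. Applying this with $B=\ell^\infty(\IN, A)$ and $I=c_\infty(\IN,A)$, every $x\in A_{\infty,\alpha}$ lifts to a continuous element $(y_n)_n\in\ell^\infty(\IN,A)$; then the given representing sequence satisfies $(x_n)_n - (y_n)_n \in c_\infty(\IN,A)$, which is continuous by the previous step, so $(x_n)_n$ itself is continuous, as desired. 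The only real obstacle in this argument is the converse direction, which genuinely needs Brown's theorem (or the longer hands-on alternative that the authors say populated the initial preprint); everything else is either a bookkeeping step or a short norm estimate.
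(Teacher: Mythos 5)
Your proposal is correct and takes the same route as the paper, which gives no written proof of this lemma and simply defers to Brown's Theorem 2; the extra bookkeeping you supply (the easy ``if'' direction, and the observation that $c_\infty(\IN,A)$ lies in the continuous part of $\ell^\infty(\IN,A)$, so that \emph{every} representing sequence of a continuous element is continuous, not just the lifted one) is exactly what is needed to extract the stated lemma from that citation. One correction: Brown's theorem must be quoted for isometric actions that are \emph{not} assumed point-norm continuous on the ambient algebra --- as you phrase it, for ``any continuous action on $B$'', the statement is vacuous and would not apply to $B=\ell^\infty(\IN,A)$, whose componentwise action you yourself note is generally discontinuous; with the hypothesis stated correctly (isometric $G$-action on a Banach space, invariant closed subspace), your application goes through as intended.
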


\begin{lemma} \label{uniform norm}
Let $G$ be a locally compact group, $A$ a \cstar-algebra and $\alpha:G \curvearrowright A$ a continuous action. Let $x \in A_{\infty,\alpha}$ and $(x_n)_n \in \ell^\infty(\IN,A)$ a representing sequence. Then the following statement holds: For every compact set $K \subset G$ and $\delta > 0$, there exists some $n_0\in\IN$ such that for all $g \in K$,
\[
\sup_{k \geq n_0} \| \alpha_g(x_k) - x_k \| \leq \| \alpha_{\infty,g}(x) - x \| + \delta.
\]
\end{lemma}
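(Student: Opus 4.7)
The plan is to combine the uniform continuity information provided by Lemma \ref{continuous lifts} with the definition of the norm on $A_\infty$ as a $\limsup$, and glue these together using compactness of $K$. Throughout, $(x_n)_n$ is a fixed bounded lift of $x$.

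First, for each $g_0 \in K$, Lemma \ref{continuous lifts} yields an open neighbourhood $V_{g_0}$ of $g_0$ with
\[
\sup_{k\in\IN}\sup_{g\in V_{g_0}} \|\alpha_g(x_k)-\alpha_{g_0}(x_k)\| \leq \delta/3.
\]
Because $x\in A_{\infty,\alpha}$, the map $g\mapsto \alpha_{\infty,g}(x)$ (and hence $g\mapsto \|\alpha_{\infty,g}(x)-x\|$) is continuous on $G$. Thus I can shrink the neighbourhood to some $U_{g_0}\subset V_{g_0}$ so that additionally
\[
\|\alpha_{\infty,g_0}(x)-x\| \leq \|\alpha_{\infty,g}(x)-x\| + \delta/3 \quad\text{for every } g\in U_{g_0}.
\]

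Next, by compactness of $K$, finitely many such neighbourhoods $U_{g_1},\ldots,U_{g_m}$ cover $K$. For each $i=1,\ldots,m$, the definition of the norm in $A_\infty$ as a $\limsup$ gives some $n_i$ such that
\[
\|\alpha_{g_i}(x_k)-x_k\| \leq \|\alpha_{\infty,g_i}(x)-x\| + \delta/3 \quad\text{for all } k\geq n_i.
\]
Set $n_0 := \max\{n_1,\ldots,n_m\}$.

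Finally, given any $g\in K$, pick $i$ with $g\in U_{g_i}$. For $k\geq n_0$, the triangle inequality together with the three estimates above yields
\[
\|\alpha_g(x_k)-x_k\| \leq \|\alpha_g(x_k)-\alpha_{g_i}(x_k)\| + \|\alpha_{g_i}(x_k)-x_k\| \leq \tfrac{\delta}{3} + \|\alpha_{\infty,g_i}(x)-x\| + \tfrac{\delta}{3} \leq \|\alpha_{\infty,g}(x)-x\| + \delta,
\]
which is the claimed inequality. The only real subtlety is the choice to bound $\|\alpha_{\infty,g_i}(x)-x\|$ in terms of $\|\alpha_{\infty,g}(x)-x\|$ rather than the reverse (the inequality has to go the right way for the conclusion), but continuity of $g\mapsto \|\alpha_{\infty,g}(x)-x\|$ makes this painless; everything else is just the uniform lift from Lemma \ref{continuous lifts} plus a finite subcover.
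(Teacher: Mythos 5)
Your proof is correct and follows essentially the same route as the paper's: apply Lemma \ref{continuous lifts} for the uniform-in-$k$ local estimate, use continuity of $g\mapsto\alpha_{\infty,g}(x)$ to control the $A_\infty$-norm term on each neighbourhood, take a finite subcover of $K$, choose $n_0$ via the $\limsup$ description of the norm, and finish with the triangle inequality. The only cosmetic difference is that you compare $\|\alpha_{\infty,g_i}(x)-x\|$ with $\|\alpha_{\infty,g}(x)-x\|$ directly, whereas the paper bounds $\|\alpha_{\infty,g}(x)-\alpha_{\infty,g_i}(x)\|\leq\delta/3$ and then applies the triangle inequality, which is the same estimate.
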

\begin{proof}
Let $K \subset G$ be compact and $\delta > 0$. Given $g_0 \in K$, we apply \ref{continuous lifts} and find some open neighbourhood $U_0$ of $g_0$ such that 
\[
\sup_{k \in \IN}~ \sup_{g\in U_0} \| \alpha_g(x_k) - \alpha_{g_0}(x_k) \| \leq \delta/3.
\]
By passing to a possibly smaller open neighbourhood of $g_0$ and using the point-norm continuity of $\alpha_\infty$ on $A_{\infty,\alpha}$, we may assume that
\[
\| \alpha_{\infty,g}(x) - \alpha_{\infty,g_0}(x) \| \leq \delta/3 \quad \text{for all}~ g\in U_0.
\]
By compactness of $K$, we find some $N \in \IN$, $\set{g_j}_{j=1}^N \subset K$, an open covering $K \subset \bigcup_{j=1}^N U_j$ such that $g_j \in U_j$ and
\[
\sup_{k \in \IN} \| \alpha_g(x_k) - \alpha_{g_j}(x_k)\| \leq \delta/3 \quad\text{and}\quad \|\alpha_{\infty,g}(x) - \alpha_{\infty,g_j}(x) \| \leq \delta/3
\]
for all $j = 1,\ldots, N$ and $g \in U_j$. Moreover, we find $n_0 \in \IN$ such that for $j = 1,\ldots,N$,
\[
\sup\limits_{k \geq n_0} \| \alpha_{g_j}(x_k) - x_k \| \leq \| \alpha_{\infty,g_j}(x) - x \| + \delta/3.
\]
We then compute for $g\in U_j$
\[
\begin{array}{lcl}
\sup\limits_{k \geq n_0} \| \alpha_g(x_k) - x_k \| & \leq &
 \sup\limits_{k \geq n_0}\| \alpha_g(x_k) - \alpha_{g_j}(x_k) \| + \sup\limits_{k\geq n_0} \| \alpha_{g_j}(x_k) - x_k \| \\
 & \leq & \delta/3 + \| \alpha_{\infty,g_j}(x) - x \| + \delta/3 \\
 & \leq & \| \alpha_{\infty,g_j}(x) - \alpha_{\infty,g}(x) \| + \| \alpha_{\infty,g}(x) - x \| + 2\delta/3 \\
 & \leq & \| \alpha_{\infty,g}(x) - x \| + \delta.
\end{array}
\]
Since the sets $U_j$ formed an open cover of $K$, this concludes the proof.
\end{proof}

Using this simple technical tool, we can generalize most of the basic properties of sequentially split $*$-homomorphisms, which were shown in the second section, to the equivariant context. Since doing this is very routine, we recommend the reader to skip the next few technical statements upon first reading, and jump right ahead to \ref{crossed products sequential dominance}, where we start discussing properties that are exclusively interesting in the equivariant context.

\begin{lemma} \label{weak-eq-seq-split}
Let $A$ and $B$ be two separable \cstar-algebras, $G$ a second countable, locally compact group and $\alpha:G\curvearrowright A$ and $\beta:G\curvearrowright B$ two continuous actions. An equivariant $*$-homomorphism $\phi:(A,\alpha)\to (B,\beta)$ is sequentially split if and only if there exists a commutative diagram of equivariant $*$-homomorphisms
\[
\xymatrix{
(A,\alpha) \ar[rr] \ar[dr]_\phi & & ((A_{\infty,\alpha})_{\infty,\alpha_\infty},(\alpha_\infty)_\infty) \\
 & (B,\beta) \ar[ru]
}
\]
\end{lemma}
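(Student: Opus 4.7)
The ``only if'' direction is immediate from the canonical equivariant inclusion $(A_{\infty,\alpha},\alpha_\infty)\hookrightarrow((A_{\infty,\alpha})_{\infty,\alpha_\infty},(\alpha_\infty)_\infty)$. For the nontrivial direction, the plan is to adapt the diagonal extraction from \ref{weak-seq-split}, with two additional features: approximate equivariance must be preserved along a countable dense subset of $G$, and the resulting map must land in the continuous part $A_{\infty,\alpha}$.

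Given $\psi:(B,\beta)\to((A_{\infty,\alpha})_{\infty,\alpha_\infty},(\alpha_\infty)_\infty)$, the first step is to lift it to a family of $*$-linear contractions $\psi_{m,n}:B\to A$ such that $(\psi_{m,n}(b))_{m,n}$ represents $\psi(b)$ in the doubly iterated sequence algebra, and such that for each $b$ and $m$ the sequence $(\psi_{m,n}(b))_n$ represents some $y_m(b)\in A_{\infty,\alpha}$; this is possible by \ref{continuous lifts}, since the $y_m(b)$ are representatives of the continuous element $\psi(b)$. One then fixes increasing finite subsets $S_k\fin A$ and $F_k\fin B$ with dense unions, a countable dense subset $G'=\bigcup_k H_k\subset G$ (using second countability), and an exhaustion of $G$ by compact neighbourhoods $K_k$ of the identity.

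The usual limit computations give, as $m\to\infty$ then $n\to\infty$, approximate multiplicativity, approximate contractivity, and approximate compatibility with $\phi$. The equivariance of $\psi$ additionally yields $\|\psi_{m,n}(\beta_g(b))-\alpha_g(\psi_{m,n}(b))\|\to 0$ for each individual $g\in G'$. Crucially, \ref{uniform norm} applied to each $y_m(b)\in A_{\infty,\alpha}$ provides, for $b\in F_k$ and $g\in K_k$, the uniform estimate
\[
\|\alpha_g(\psi_{m,n}(b))-\psi_{m,n}(b)\|\leq\|\alpha_{\infty,g}(y_m(b))-y_m(b)\|+1/k
\]
for all sufficiently large $n$; since $\psi(b)\in(A_{\infty,\alpha})_{\infty,\alpha_\infty}$, the continuity of $(\alpha_\infty)_\infty$ in turn bounds the right-hand side, for $m$ large, by $\|(\alpha_\infty)_{\infty,g}(\psi(b))-\psi(b)\|+2/k$. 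A standard interleaving now selects indices $(m_k,n_k)$ satisfying all these estimates simultaneously up to tolerance $O(1/k)$.

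Setting $\psi'(b)=[(\psi_{m_k,n_k}(b))_k]\in A_\infty$, the first batch of conditions ensures that $\psi'$ extends to a $*$-homomorphism $B\to A_\infty$ with $\psi'\circ\phi$ the canonical embedding of $A$, and equivariant along $G'$. The last uniform constraint, together with the exhaustion by the $K_k$, yields $\|\alpha_{\infty,g}(\psi'(b))-\psi'(b)\|\leq\|(\alpha_\infty)_{\infty,g}(\psi(b))-\psi(b)\|$ for every $g\in G$; since the right-hand side vanishes as $g\to e$, this shows $\psi'(b)\in A_{\infty,\alpha}$. Equivariance of $\psi'$ then extends from $G'$ to all of $G$ by point-norm continuity of $\beta$ on $B$ and of $\alpha_\infty$ on $A_{\infty,\alpha}$. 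The main obstacle is precisely this last step: proving that the diagonal sequence lands in the continuous part requires building the uniform estimate from \ref{uniform norm} into the diagonal extraction, which is what upgrades approximate equivariance along a countable dense subset to genuine equivariance of the limit on all of $G$.
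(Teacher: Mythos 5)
Your proposal is correct and follows essentially the same route as the paper's proof: lift $\psi$ in two stages, build the uniform estimate from \ref{uniform norm} (applied at both levels of the iterated sequence algebra) into the diagonal extraction over finite subsets of $A$, $B$ and a countable dense subset of $G$, conclude that the diagonal map lands in $A_{\infty,\alpha}$, and upgrade equivariance from the dense subset to all of $G$ by continuity. The only differences are cosmetic: the paper fixes a single compact neighbourhood of the identity and bounds the defect by $\|\beta_g(b)-b\|$ via equivariance of $\psi$, whereas you use an exhaustion by compact sets and the continuity of $(\alpha_\infty)_\infty$ on the continuous part, which is equally sufficient since continuity at the identity is all that is needed.
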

\begin{proof}
Since the ``only if''-part is trivial, we only show the ``if''-part. For this, let $\psi :(B,\beta)\to ((A_{\infty,\alpha})_{\infty,\alpha_\infty},(\alpha_\infty)_\infty)$ be an equivariant $*$-homomorphism satisfying $\psi\circ\phi(a)=a$ for all $a\in A$. Find $*$-linear maps $\psi_m:B \to A_{\infty,\alpha}$, $m \in \IN$, such that $\psi(b) = [(\psi_m(b))_m]$ for all $b \in B$. Find $*$-linear maps $\psi_{(m,n)}:B\to A$, $m,n \in \IN$, such that $\psi_m(b) = [(\psi_{(m,n)}(b))_n]$ for all $b \in B$. Observe that $\psi(b)=[(\psi_{(m,n)}(b))_{(m,n)}] \in (A_{\infty,\alpha})_{\infty,\alpha_\infty}$ for all $b\in B$.

Let $H_1 \subset H_2 \subset \ldots \fin G$ be an increasing sequence of finite subsets $H_k$ such that $G' = \bigcup_{k \in \IN} H_k$ is a dense subgroup of $G$. Let $K \subset G$ be a compact neighbourhood of $\eins_G$. Let $S_1 \subset S_2 \subset \ldots \fin A $ be an increasing sequence of finite sets $S_k$ such that $A' = \bigcup_{k \in \IN} S_k$ is dense in $A$. Let $F_1 \subset F_2 \subset \ldots \fin B$ be an increasing sequence of finite sets $F_k$ such that $B' = \bigcup_{k\in \IN} F_k$ is a dense $\IQ[i]$-$*$-subalgebra of $B$. We may assume that $\phi(A') \subset B'$ and $\beta_g(B') = B'$ for all $g \in G'$.

As $\psi:(B,\beta) \to ((A_{\infty,\alpha})_{\infty,\alpha_\infty},(\alpha_\infty)_\infty)$ is an equivariant $*$-homomorphism, we find a sequence of natural numbers $(m_k)_k$ such that for $k\in \IN$, $h \in H_k$ and $b,b'\in F_k$,
\[
\limsup\limits_{n \to \infty} \| \psi_{(m_k,n)}(bb') - \psi_{(m_k,n)}(b)\psi_{(m_k,n)}(b')\| \leq 1/k
\]
and
\[
\limsup\limits_{n \to \infty} \| \psi_{(m_k,n)}(b) \| \leq \| b \| + 1/k
\]
and
\[
\limsup\limits_{n \to \infty} \| \alpha_h \circ \psi_{(m_k,n)}(b) - \psi_{(m_k,n)} \circ \beta_h(b) \| \leq 1/k.
\]
Using furthermore that $\phi \circ \psi$ coincides with the standard embedding $A \into (A_{\infty,\alpha})_{\infty,\alpha_\infty}$, we may also assume that for all $k \in \IN$ and $a \in S_k$,
\[
\limsup_{n \to \infty} \| \psi_{(m_k,n)} \circ \phi(a) - a \| \leq 1/k.
\]
Applying  \ref{uniform norm}, we may assume that for all $k \in \IN$, $b\in F_k$ and $g \in K$, we have
\[
\begin{array}{ccl}
\| \alpha_{\infty,g} \circ \psi_{m_k}(b) - \psi_{m_k}(b) \| &\leq& \| (\alpha_\infty)_{\infty,g} \circ \psi(b) - \psi(b) \| + 1/k \\
& \leq & \| \beta_g(b) - b \| + 1/k.
\end{array}
\]
Similarly, we find a sequence of natural numbers $(n_k)_k$ such that for $k \in \IN$, $h \in H_k$ and $b,b' \in F_k$, we have
\[
\| \psi_{(m_k,n_k)}(bb') - \psi_{(m_k,n_k)}(b)\psi_{(m_k,n_k)}(b')\| \leq 2/k
\]
and
\[
\| \psi_{(m_k,n_k)}(b) \| \leq \| b \| + 2/k
\]
and
\[
\| \alpha_h \circ \psi_{(m_k,n_k)}(b) - \psi_{(m_k,n_k)} \circ \beta_h(b) \| \leq 2/k.
\]
Moreover, we may assume that for all $k \in \IN$ and $a \in S_k$,
\[
\| \psi_{(m_k,n_k)} \circ \phi(a) - a \| \leq 2/k.
\]
Using that for $k \in \IN$ and $b \in B$, $\psi_{m_k}(b) = [(\psi_{(m_k,n)}(b))_n] \in A_{\infty,\alpha}$, we may by \ref{uniform norm} also assume that for $g \in K$,
\[
\begin{array}{rl}
\multicolumn{2}{l}{ \| \alpha_g \circ \psi_{(m_k,n_k)}(b) - \psi_{(m_k,n_k)}(b) \| } \\ 
\hspace{10mm}\leq& \| \alpha_{\infty,g}([(\psi_{(m_k,n)}(b))_n]) - [(\psi_{(m_k,n)}(b))_n] \| + 1/k \\
=& \| \alpha_{\infty,g}(\psi_{m_k}(b)) - \psi_{m_k}(b) \| + 1/k \\
 \leq& \| \beta_g(b) - b \| + 2/k.
\end{array}
\]
Define the $\IQ[i]$-$*$-linear map
\[
\psi':B' \to A_\infty,\ \psi'(b) = [(\psi_{(m_k,n_k)}(b))_k],
\]
that by construction is a contractive $*$-homomorphism satisfying $\psi' \circ \phi(a) = a$ for all $a\in A'$. As $\psi'$ is contractive, it extends uniquely to a $*$-homomorphism $\psi':B \to A_\infty$. Using that $A' \subset A$ is dense, we conclude that $\psi' \circ \phi$ coincides with the canonical embedding $A \into A_\infty$.

It remains to show that $\psi'$ is $\beta$-to-$\alpha_\infty$-equivariant. It follows from the construction of $\psi'$ that $\alpha_{\infty,g} \circ \psi'(b) = \psi' \circ \beta_g (b)$ for all $b \in B'$ and $g \in G'$. As $B' \subset B$ is dense, we conclude that $\psi'$ is equivariant with respect to the induced $G'$-actions. We claim that $\psi'(B) \subset A_{\infty,\alpha}$. For $b \in B'$ and $g \in K$,
\[
\begin{array}{lcl}
\| \alpha_{\infty,g} \circ \psi'(b) - \psi'(b) \| & = &
\limsup\limits_{k \to \infty} \| \alpha_g \circ \psi_{(m_k,n_k)} (b) - \psi_{(m_k,n_k)}(b) \| \\
 & \leq & \limsup\limits_{k\to \infty} \| \beta_g(b) - b \| + 2/k \\
 & = & \| \beta_g(b) - b \|.
\end{array}
\]
As $\beta$ is a continuous action, we find for given $\delta > 0$ an open neighbourhood $U \subset K$ of $1_G$ such that $\| \beta_g(b) - b \| \leq \delta$ for all $g \in U$. This shows that for all $g\in U$, $\| \alpha_{\infty,g} \circ \psi'(b) - \psi'(b) \| \leq \delta$. Hence, $\psi'(b) \in A_{\infty,\alpha}$. As $B' \subset B$ is dense, we conclude that $\psi'$ indeed maps into $A_{\infty,\alpha}$. Now let $b \in B$, $g \in G$ and find a sequence $(g_n)_n \subset G'$ that converges to $g$. Using that $\psi'$ is equivariant with respect to the $G'$-actions and that $\psi'(B) \subset A_{\infty,\alpha}$, we get that
\[
\alpha_{\infty,g}(\psi'(b)) = \lim\limits_{n \to \infty} \alpha_{\infty,g_n}(\psi'(b)) = \lim\limits_{n\to \infty} \psi'(\beta_{g_n}(b)) = \psi'(\beta_g(b)).
\]
This shows that $\phi:(A,\alpha) \to (B,\beta)$ is sequentially split.
\end{proof}

As in the non-equivariant case, one can use \ref{weak-eq-seq-split} in order to conclude that compositions of equivariantly sequentially split $*$-homomorphisms are equivariantly sequentially split. We omit the proof as it is completely analogous.

\begin{prop}
Restricted to separable \cstar-algebras with point-norm continuous actions by second countable, locally compact groups, the composition of two equivariantly sequentially split $*$-homomorphisms is equivariantly sequentially split.
\end{prop}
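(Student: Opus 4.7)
The plan is to adapt the proof of Proposition \ref{seq-split-comp} to the equivariant setting, replacing Lemma \ref{weak-seq-split} with its equivariant analogue Lemma \ref{weak-eq-seq-split}. Let $\phi:(A,\alpha)\to(B,\beta)$ and $\rho:(B,\beta)\to(C,\gamma)$ be equivariantly sequentially split $*$-homomorphisms between separable \cstar-algebras with continuous actions of a second countable, locally compact group $G$, and fix equivariant approximate left-inverses $\psi_\phi:(B,\beta)\to(A_{\infty,\alpha},\alpha_\infty)$ and $\psi_\rho:(C,\gamma)\to(B_{\infty,\beta},\beta_\infty)$. The goal is to produce an equivariant $*$-homomorphism
\[
(C,\gamma)\longrightarrow \bigl((A_{\infty,\alpha})_{\infty,\alpha_\infty},(\alpha_\infty)_\infty\bigr)
\]
that agrees with the canonical double embedding on $A$ after precomposition with $\rho\circ\phi$, whence Lemma \ref{weak-eq-seq-split} applies.

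The key step is to observe that componentwise application of $\psi_\phi$ to representing sequences defines a $*$-homomorphism $(\psi_\phi)_\infty:B_\infty\to(A_{\infty,\alpha})_\infty$ (well-defined because $\psi_\phi$ is contractive) which is automatically $\beta_\infty$-to-$(\alpha_\infty)_\infty$-equivariant by equivariance of $\psi_\phi$. I claim that its restriction to $B_{\infty,\beta}$ takes values in $(A_{\infty,\alpha})_{\infty,\alpha_\infty}$: indeed, for $b\in B_{\infty,\beta}$ one has
\[
(\alpha_\infty)_{\infty,g}\bigl((\psi_\phi)_\infty(b)\bigr)=(\psi_\phi)_\infty\bigl(\beta_{\infty,g}(b)\bigr),
\]
and the right-hand side depends norm-continuously on $g$ since $g\mapsto\beta_{\infty,g}(b)$ is norm-continuous on $B_{\infty,\beta}$ and $(\psi_\phi)_\infty$ is contractive. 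Thus $(\psi_\phi)_\infty$ restricts to an equivariant $*$-homomorphism $(B_{\infty,\beta},\beta_\infty)\to((A_{\infty,\alpha})_{\infty,\alpha_\infty},(\alpha_\infty)_\infty)$.

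Given this, the composition $(\psi_\phi)_\infty\circ\psi_\rho$ is an equivariant $*$-homomorphism from $(C,\gamma)$ into the double continuous sequence algebra of $A$. For $a\in A$, we have $\psi_\rho(\rho(\phi(a)))=\phi(a)\in B\subset B_{\infty,\beta}$, and hence $(\psi_\phi)_\infty(\phi(a))$ is represented by the constant sequence $(\psi_\phi(\phi(a)))_n=(a)_n$, which is exactly the image of $a$ under the canonical double embedding $A\to(A_{\infty,\alpha})_{\infty,\alpha_\infty}$. Lemma \ref{weak-eq-seq-split} then yields that $\rho\circ\phi$ is equivariantly sequentially split. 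There is no substantive obstacle here beyond unraveling definitions; the only point requiring any care is that $(\psi_\phi)_\infty$ restricts to the continuous parts, which is handled by the simple computation above rather than by an appeal to Lemma \ref{uniform norm}.
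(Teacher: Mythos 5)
Your argument is correct and is exactly the proof the paper has in mind: it omits the details precisely because they amount to running the non-equivariant composition argument (Proposition \ref{seq-split-comp}) with Lemma \ref{weak-eq-seq-split} in place of Lemma \ref{weak-seq-split}. Your observation that the componentwise extension $(\psi_\phi)_\infty$ carries $B_{\infty,\beta}$ into $(A_{\infty,\alpha})_{\infty,\alpha_\infty}$, by equivariance plus contractivity, correctly handles the only point where the equivariant setting requires extra care.
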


The following is completely analogous to \ref{sequential dominance properties 1}\ref{sdp1:1}.

\begin{prop} \label{eq-seq-split-hered}
Let $A$ and $B$ be \cstar-algebras, $G$ a locally compact group, and $\alpha:G\curvearrowright A$ and $\beta:G \curvearrowright B$ continuous actions. Let $E\subset A$ be a hereditary, $\alpha$-invariant \cstar-subalgebra. Assume that $\phi:(A,\alpha) \to (B,\beta)$ is a sequentially split $*$-homomorphism. Set $E_\phi = \overline{\phi(E)B\phi(E)}$. Then the induced equivariant $*$-homomorphism $\phi:(E,\alpha|_E)\to (E_\phi,\beta|_{E_\phi})$ is sequentially split.
\end{prop}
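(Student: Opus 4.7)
The plan is to mimic the non-equivariant argument from Theorem \ref{sequential dominance properties 1}\ref{sdp1:1}, with the added task of verifying that the restricted map lands inside the continuous part of the sequence algebra. Let $\psi:(B,\beta)\to (A_{\infty,\alpha},\alpha_\infty)$ be an equivariant approximate left-inverse for $\phi$. First I would check that $\phi(E)\subset E_\phi$, which follows from approximating elements of $E$ on both sides by an approximate unit of $E$, so that $\phi|_E : E \to E_\phi$ is well-defined; $\beta$-invariance of $E_\phi$ follows from equivariance of $\phi$ and $\alpha$-invariance of $E$, so $\beta|_{E_\phi}$ makes sense.

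Next, I would verify that the restriction of $\psi$ to $E_\phi$ takes values in $E_\infty$. The key computation is exactly the one used non-equivariantly: for $e_1,e_2\in E$ and $b\in B$, since $\psi\circ\phi$ is the canonical inclusion,
\[
\psi(\phi(e_1)\,b\,\phi(e_2)) \;=\; e_1\,\psi(b)\,e_2,
\]
and heredity of $E$ in $A$ means that every representative sequence of $e_1\psi(b)e_2$ has entries in $E$, placing this element in $E_\infty\subset A_\infty$. Taking linear combinations, products, and closures gives $\psi(E_\phi)\subset E_\infty$.

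Combining this with $\psi(E_\phi)\subset A_{\infty,\alpha}$, the image lands in $E_\infty\cap A_{\infty,\alpha}$, and I would then observe that this set coincides with $E_{\infty,\alpha|_E}$: since $\alpha|_E$ is the restriction of $\alpha$ to the $\alpha$-invariant subalgebra $E$, the induced action on $E_\infty$ is the restriction of $\alpha_\infty$ to $E_\infty$, and $E_\infty$ sits isometrically inside $A_\infty$, so continuity of an orbit map into $E_\infty$ is equivalent to continuity into $A_\infty$. Thus $\psi$ restricts to an equivariant $*$-homomorphism $(E_\phi,\beta|_{E_\phi})\to (E_{\infty,\alpha|_E},(\alpha|_E)_\infty)$, which by construction satisfies $\psi|_{E_\phi}\circ \phi|_E = \iota_E$, the canonical inclusion. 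This witnesses that $\phi|_E$ is equivariantly sequentially split.

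There is essentially no main obstacle here: equivariance is automatically preserved under restriction, and the argument is a routine refinement of the non-equivariant case. The only point that deserves a moment's care is the identification $E_\infty\cap A_{\infty,\alpha}=E_{\infty,\alpha|_E}$, which is conceptually clear but relies on the fact that $E$ is $\alpha$-invariant so that the two candidate actions on $E_\infty$ agree.
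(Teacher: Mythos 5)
Your proposal is correct and follows exactly the route the paper intends: it states the result without proof as ``completely analogous'' to Theorem \ref{sequential dominance properties 1}\ref{sdp1:1}, and your argument is precisely that computation $\psi(\phi(e_1)b\phi(e_2))=e_1\psi(b)e_2\in E_\infty$ together with the routine checks that $E_\phi$ is $\beta$-invariant and that $E_\infty\cap A_{\infty,\alpha}=E_{\infty,\alpha|_E}$ via the isometric, equivariant embedding $E_\infty\subset A_\infty$. Only a cosmetic remark: rather than saying ``every representative sequence of $e_1\psi(b)e_2$ has entries in $E$'', you should say that \emph{some} representative sequence, namely $(e_1x_ne_2)_n$ for a representative $(x_n)_n$ of $\psi(b)$, lies in $E$ by heredity.
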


Next comes the equivariant analogue of \ref{inductive limits}. Since its proof is analogous, we omit it.

\begin{cor} \label{equi inductive limits}
Let $G$ be a second countable, locally compact group. Let $\set{(A_n,\alpha_n), \kappa_n}_{n\in\IN}$ and $\set{(B_n,\beta_n), \theta_n}_{n\in\IN}$ be two inductive systems of separable $G$-\cstar-algebras with ($G$-equivariant) limits $(A,\alpha)$ and $(B,\beta)$, respectively. Let $\phi_n: (A_n,\alpha_n)\to (B_n,\beta_n)$ be a sequence of equivariant $*$-homomorphisms compatible with the two inductive systems, i.e.~$\theta_n\circ\phi_n = \phi_{n+1}\circ\kappa_n$ for all $n$. Denote by $\phi: (A,\alpha)\to (B,\beta)$ the induced $*$-homomorphism given by the universal property of the inductive limits.
If each of the $*$-homomorphisms $\phi_n$ is equivariantly sequentially split, then $\phi$ is equivariantly sequentially split.
\end{cor}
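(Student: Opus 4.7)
The plan is to follow the non-equivariant proof of Proposition \ref{inductive limits} essentially verbatim, and then use the equivariant weak reformulation \ref{weak-eq-seq-split} at the end, provided we can verify that the assembled map actually lands in the continuous part of the double sequence algebra. So first, using the hypothesis, pick for each $n\in\IN$ an equivariant approximate left-inverse $\psi_n: (B_n,\beta_n)\to ((A_n)_{\infty,\alpha_n}, (\alpha_n)_\infty)$ for $\phi_n$. Let $\eta_n: A_n\to A$ and $\eps_n: B_n\to B$ denote the canonical equivariant $*$-homomorphisms. Since $\eta_n$ is equivariant, the induced map $(\eta_n)_\infty$ sends $(A_n)_{\infty,\alpha_n}$ into $A_{\infty,\alpha}$. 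Define
\[
\tilde{\psi}: \prod_{n\in\IN} B_n\Big/\bigoplus_{n\in\IN} B_n \to (A_{\infty,\alpha})_\infty, \quad [(b_n)_n] \mapsto [((\eta_n)_\infty\circ\psi_n(b_n))_n],
\]
and, as in \ref{inductive limits}, consider the embedding $\iota: B\hookrightarrow \prod_n B_n/\bigoplus_n B_n$, $\eps_n(b)\mapsto [(\theta_{k-1}\circ\cdots\circ\theta_n(b))_{k\geq n}]$, which is $\beta$-to-$(\beta_n)_n$-equivariant by construction.

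The main step, and the only genuine novelty compared to \ref{inductive limits}, is to verify that the composition $\psi := \tilde{\psi}\circ\iota$ takes values in the continuous part $(A_{\infty,\alpha})_{\infty,\alpha_\infty}$ of $(A_{\infty,\alpha})_\infty$. Fix $b\in B_n$ and, for $k\geq n$, write $b_k=\theta_{k-1}\circ\cdots\circ\theta_n(b)$ and $y_k = (\eta_k)_\infty\circ\psi_k(b_k) \in A_{\infty,\alpha}$. Since $\psi_k$ is equivariant and $(\eta_k)_\infty$ intertwines $(\alpha_k)_\infty$ with $\alpha_\infty$, we have
\[
\|\alpha_{\infty,g}(y_k)-y_k\| = \|(\eta_k)_\infty\circ\psi_k(\beta_{k,g}(b_k) - b_k)\| \leq \|\beta_{k,g}(b_k) - b_k\|.
\]
Because every $\theta_j$ is an equivariant $*$-homomorphism, hence a contraction, the right-hand side is bounded by $\|\beta_{n,g}(b) - b\|$ uniformly in $k\geq n$. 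Point-norm continuity of $\beta_n$ therefore yields that $g\mapsto \alpha_{\infty,g}(y_k)$ converges to $y_k$ uniformly in $k$, which means that the class $\psi(\eps_n(b))=[(y_k)_{k\geq n}]$ lies in $(A_{\infty,\alpha})_{\infty,\alpha_\infty}$.

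Once this continuity is established, the remaining verifications are formal. Equivariance of $\psi$ with respect to $\beta$ and $(\alpha_\infty)_\infty$ follows from the equivariance of each $\psi_k$, of each $\eta_k$, and of $\iota$. The identity $\psi\circ\phi(\eta_n(a)) = \eta_n(a)$ for $a\in A_n$ is verified exactly as in the proof of \ref{inductive limits}, using that $\psi_k\circ\phi_k$ coincides with the standard embedding of $A_k$ into $(A_k)_{\infty,\alpha_k}$ together with $\phi_{k+1}\circ\kappa_k=\theta_k\circ\phi_k$. Consequently $\psi\circ\phi$ agrees on a dense subset with the canonical embedding $A\hookrightarrow (A_{\infty,\alpha})_{\infty,\alpha_\infty}$, hence everywhere, and Lemma \ref{weak-eq-seq-split} produces an equivariant approximate left-inverse into $A_{\infty,\alpha}$ itself. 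I expect the verification of continuity described in the previous paragraph to be the only non-routine point; the uniformity provided by contractivity of the connecting maps $\theta_j$ is what makes it work.
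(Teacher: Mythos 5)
Your argument is correct and is exactly the route the paper intends: the proof of Corollary \ref{equi inductive limits} is omitted there as being ``analogous'' to Proposition \ref{inductive limits}, i.e.\ the same assembly of the maps $(\eta_k)_\infty\circ\psi_k$ followed by an application of Lemma \ref{weak-eq-seq-split} in place of Lemma \ref{weak-seq-split}. The one genuinely new point --- that the assembled map lands in $(A_{\infty,\alpha})_{\infty,\alpha_\infty}$, via the uniform bound $\|\alpha_{\infty,g}(y_k)-y_k\|\leq\|\beta_{n,g}(b)-b\|$ coming from equivariance and contractivity of the connecting maps --- is exactly what needs checking, and you verify it correctly.
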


\begin{rem}
The notion of equivariantly sequentially split homomorphisms can be extended in a straightforward way to \cstar-algebras equipped with endomorphic actions by semigroups. More precisely, given \cstar-algebras $A$ and $B$, a discrete semigroup $P$, and actions $\alpha:P\curvearrowright A$ and $\beta:P\curvearrowright B$ by endomorphisms, an equivariant $*$-homomorphism $\phi:(A,\alpha)\to (B,\beta)$ is called (equivariantly) sequentially split, if there exists an equivariant $*$-homomorphism $\psi:(B,\beta) \to (A_\infty,\alpha_\infty)$ making the following diagram commute
\[
\xymatrix{
(A,\alpha) \ar[rr] \ar[dr]_\phi & & (A_\infty,\alpha_\infty) \\
 & (B,\beta) \ar[ru]_\psi
}
\]
At least when $A$, $B$ and $\phi$ are unital, this notion behaves well with respect to the corresponding semigroup crossed products, that is, the \cstar-algebras that are universal for unital covariant pairs; see \cite{LacaRaeburn1996} for a precise definition. In this situation, many of the facts proved in this section hold as well. This is used in \cite[Section 5]{BarlakOmlandStammeier16} as an important tool related to the computation of the $K$-groups of \cstar-algebras arising from integral dynamics. However, we will not pursue this type of generalization in this paper.
\end{rem}

From now on, we start discussing properties of sequentially split $*$-homo\-m\-orphisms that are exclusively interesting in the equivariant context.
Namely, they turn out to enjoy the following crucial functoriality property with respect to formation of crossed products.

\begin{prop} \label{crossed products sequential dominance}
Let $A$ and $B$ be \cstar-algebras, $G$ a locally compact group and $\alpha:G\curvearrowright A$ and $\beta:G\curvearrowright B$ continuous actions. Let $\phi:(A,\alpha)\to (B,\beta)$ be an equivariantly sequentially split $*$-homomorphism. Then: 
\begin{enumerate}[label=\textup{(\roman*)},leftmargin=*]
\item The induced $*$-homomorphism $\phi\rtimes G:A\rtimes_\alpha G\to B\rtimes_\beta G$ between the crossed products is sequentially split. 
\item If $G$ is abelian, then the dual morphism $\hat{\phi}: (A\rtimes_\alpha G,\hat{\alpha})\to (B\rtimes_\beta G,\hat{\beta})$ is ($\hat{G}$-)equivariantly sequentially split.
\end{enumerate}
\end{prop}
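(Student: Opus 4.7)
The plan is to derive both parts from a single natural $*$-homomorphism
\[
\Theta\colon A_{\infty,\alpha}\rtimes_{\alpha_\infty}G \to (A\rtimes_\alpha G)_\infty
\]
which sits over the canonical embedding $A\rtimes_\alpha G \hookrightarrow (A\rtimes_\alpha G)_\infty$. I would construct $\Theta$ via the universal property of the crossed product, applied to the covariant pair $(\iota,v)$, where $\iota:A_{\infty,\alpha}\to\CM((A\rtimes_\alpha G)_\infty)$ is induced componentwise from the canonical embedding $A\subset\CM(A\rtimes_\alpha G)$, and $v:G\to\CU(\CM((A\rtimes_\alpha G)_\infty))$ is the constant sequence of canonical implementing unitaries of $\alpha$. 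Covariance of $(\iota,v)$ is inherited componentwise from $(A,\alpha)$, and the continuity of $\alpha_\infty$ on $A_{\infty,\alpha}$ together with \ref{uniform norm} is used to verify that the integrated form sends $C_c(G,A_{\infty,\alpha})$ into $(A\rtimes_\alpha G)_\infty$ itself rather than merely into its multiplier algebra, by reducing to elements of the form $\iota(x)\int h(g)v_g\,dg$ with $x\in A_{\infty,\alpha}$ and $h\in C_c(G)$ and lifting to componentwise products of the form $[(x_n\cdot h)_n]$. This technical verification is the main obstacle of the proof; once it is in place, the compatibility diagram relating $\Theta$ to the canonical inclusion of $A\rtimes_\alpha G$ on both sides is automatic.

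For part (i), let $\psi:(B,\beta)\to(A_{\infty,\alpha},\alpha_\infty)$ be an equivariant approximate left-inverse for $\phi$, and define
\[
\tilde\psi \;:=\; \Theta\circ(\psi\rtimes G)\colon B\rtimes_\beta G \to (A\rtimes_\alpha G)_\infty.
\]
Functoriality of the crossed product gives $(\psi\rtimes G)\circ(\phi\rtimes G)=(\psi\circ\phi)\rtimes G$, which is the canonical inclusion $A\rtimes_\alpha G\hookrightarrow A_{\infty,\alpha}\rtimes_{\alpha_\infty}G$ coming from $A\subset A_{\infty,\alpha}$. Composing further with $\Theta$ and invoking its compatibility with the canonical embeddings recovers the standard embedding $A\rtimes_\alpha G\hookrightarrow (A\rtimes_\alpha G)_\infty$. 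Hence $\tilde\psi$ is an approximate left-inverse for $\phi\rtimes G$, and $\phi\rtimes G$ is sequentially split.

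For part (ii) I would verify that $\Theta$ is $\widehat{\alpha_\infty}$-to-$(\hat\alpha)_\infty$-equivariant by testing on the generating covariant pair: the image of $A_{\infty,\alpha}$ under $\iota$ lies inside the coefficient algebra, which is pointwise fixed by the dual action on both sides, while the constant sequence $v_g$ is rescaled by $\chi(g)$ under $(\hat\alpha)_{\infty,\chi}$, exactly matching the effect of $\widehat{\alpha_\infty}_\chi$ on the canonical unitaries inside the crossed product. Since the crossed product functor automatically promotes $G$-equivariance to $\hat G$-equivariance of the dual morphism, $\psi\rtimes G$ is $\hat\beta$-to-$\widehat{\alpha_\infty}$-equivariant, and consequently $\tilde\psi$ is $\hat\beta$-to-$(\hat\alpha)_\infty$-equivariant. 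Because $\hat\beta$ is a continuous action, the image of $\tilde\psi$ necessarily lies in the continuous part $(A\rtimes_\alpha G)_{\infty,\hat\alpha}$, and so $\tilde\psi$ witnesses that $\hat\phi$ is $\hat G$-equivariantly sequentially split.
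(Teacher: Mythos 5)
Your argument is correct and follows essentially the same route as the paper's proof: both reduce the statement to functoriality of the crossed product applied to an equivariant approximate left-inverse $\psi$, composed with the canonical comparison $*$-homomorphism $A_{\infty,\alpha}\rtimes_{\alpha_\infty}G\to(A\rtimes_\alpha G)_\infty$ extending the standard embedding, whose compatibility with the dual actions yields part (ii). The only difference is one of detail: the paper simply asserts the existence of this canonical map, whereas you sketch its construction and correctly identify the needed ingredients, namely Lemma \ref{uniform norm} (equicontinuity of representing sequences of elements of $A_{\infty,\alpha}$, which is what makes the componentwise lifts of elementary tensors $x\otimes h\mapsto[(x_nh)_n]$ behave well, rather than a literal appeal to the universal property with the non-strictly-continuous constant-sequence unitaries) and the observation that equivariance of $\tilde\psi$ together with continuity of $\hat\beta$ forces its image into the continuous part $(A\rtimes_\alpha G)_{\infty,\hat\alpha}$.
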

\begin{proof}
Let $\psi:(B,\beta)\to (A_{\infty,\alpha},\alpha_\infty)$ be an equivariant approximate left-inverse for $\phi$. We obtain a commutative diagram
\[
\xymatrix{
A\rtimes_\alpha G \ar[rr] \ar[rd]_{\phi\rtimes G} & & A_{\infty,\alpha}\rtimes_{\alpha_\infty} G \\
 & B\rtimes_\beta G \ar[ru]_{\psi\rtimes G}
}
\]
There is a canonical $*$-homomorphism $A_{\infty,\alpha}\rtimes_{\alpha_\infty} G\to (A\rtimes_\alpha G)_\infty$ extending the canonical inclusion of $A\rtimes_\alpha G$ into its sequence algebra. This shows that $\phi\rtimes G:A\rtimes_\alpha G\to B\rtimes_\beta G$ is sequentially split. If $G$ is abelian, $\psi\rtimes G$ and $\phi\rtimes G$ are equivariant with respect to the respective dual actions of $\hat{G}$. Moreover, the canonical $*$-homomorphism $A_{\infty,\alpha}\rtimes_{\alpha_\infty} G\to (A\rtimes_\alpha G)_\infty$ is clearly $\widehat{\alpha_\infty}$-to-$\hat{\alpha}_\infty$-equivariant. Thus, $\hat{\phi}:(A\rtimes_\alpha G,\hat{\alpha})\to (B\rtimes_\beta G,\hat{\beta})$ is sequentially split.
\end{proof}

If the acting group is compact, a similar functoriality also applies to the fixed point algebra. For the proof, we need the following fact.

\begin{lemma} \label{compact fixed point sequence algebra}
Let $A$ be a \cstar-algebra, $G$ a compact group and $\alpha: G\curvearrowright A$ a continuous action. Then the canonical embedding from $(A^\alpha)_\infty$ into $(A_\infty)^{\alpha_\infty}$ is an isomorphism.
\end{lemma}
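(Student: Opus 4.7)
The plan is to exploit the conditional expectation $E: A \to A^\alpha$ defined by $E(a) = \int_G \alpha_g(a)\, d\mu(g)$, where $\mu$ is the normalized Haar measure on $G$. Since $G$ is compact, $E$ is a well-defined, norm-decreasing, $\alpha$-equivariant projection onto $A^\alpha$. This is the tool that will let me turn an abstractly fixed element of $A_\infty$ into a representing sequence lying in $A^\alpha$.

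First I would note that the map $(A^\alpha)_\infty \to A_\infty$ is well-defined and isometric, since the inclusion $A^\alpha \hookrightarrow A$ is isometric and the ideal $c_\infty(\IN, A^\alpha)$ is precisely $c_\infty(\IN, A) \cap \ell^\infty(\IN, A^\alpha)$; its image is manifestly contained in $(A_\infty)^{\alpha_\infty}$. Injectivity is then automatic.

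The heart of the argument is surjectivity. Given $x \in (A_\infty)^{\alpha_\infty}$, pick any bounded representing sequence $(x_n)_n$, and set $y_n = E(x_n) \in A^\alpha$. Clearly $(y_n)_n$ is again a bounded sequence, so it represents some element in $(A^\alpha)_\infty$; I will show $[(y_n)_n]=x$ in $A_\infty$. Note that as $x$ is fixed under the discrete action $\alpha_\infty$, the orbit map $g\mapsto \alpha_{\infty,g}(x)$ is constant, so $x\in A_{\infty,\alpha}$ and Lemma \ref{uniform norm} applies to $(x_n)_n$. Applied with $K=G$ (compact!) and an arbitrary $\delta>0$, it produces $n_0$ with $\|\alpha_g(x_n)-x_n\|\leq\|\alpha_{\infty,g}(x)-x\|+\delta=\delta$ uniformly in $g\in G$ for all $n\geq n_0$. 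Then
\[
\|y_n - x_n\| = \Bigl\| \int_G (\alpha_g(x_n)-x_n)\, d\mu(g) \Bigr\| \leq \int_G \|\alpha_g(x_n)-x_n\|\, d\mu(g) \leq \delta
\]
for $n\geq n_0$, so $\limsup_n\|y_n-x_n\|\leq\delta$, and as $\delta>0$ was arbitrary, $[(y_n)_n]=x$.

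The only nontrivial step is establishing that the difference $y_n - x_n$ can be controlled uniformly in $n$, and here the main obstacle would be the fact that $\alpha_\infty$ need not be continuous on $A_\infty$: the naive approach of just integrating $\alpha_\infty$ on $A_\infty$ is not immediately meaningful. This is exactly the issue that Lemma \ref{uniform norm} is designed to overcome, by transferring information from the abstractly fixed element $x$ back to a uniform bound on its representing sequence. Once that uniform bound is in hand, compactness of $G$ makes the rest routine.
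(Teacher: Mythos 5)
Your proof is correct and follows essentially the same route as the paper: both apply Lemma \ref{uniform norm} to the fixed element $x$ (noting it lies in $A_{\infty,\alpha}$ since its orbit map is constant) to get a uniform bound on $\|\alpha_g(x_n)-x_n\|$ over the compact group, and then replace $x_n$ by its Haar average to obtain a representing sequence in $A^\alpha$. The additional remarks on well-definedness and injectivity are harmless but not needed beyond what the paper takes for granted.
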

\begin{proof}
Let $x\in (A_\infty)^{\alpha_\infty}$ be represented by a bounded sequence $(x_n)_n$ in $A$. Since in particular $x\in A_{\infty,\alpha}$, it follows from \ref{uniform norm} that 
\[
\max_{g\in G} \|x_k-\alpha_g(x_k)\| \stackrel{k\to\infty}{\longrightarrow} 0.
\]
Let $\mu$ be the normalized Haar measure on $G$. In particular, we have 
\[
\|x_k-\int_G \alpha_g(x_k)~d\mu(g)\| \stackrel{k\to\infty}{\longrightarrow} 0,
\]
showing that $x$ can be represented by a sequence in $A^\alpha$.
\end{proof}

\begin{prop} \label{fixed point sequential dominance}
Let $A$ and $B$ be \cstar-algebras, $G$ a compact group and $\alpha:G\curvearrowright A$ and $\beta:G\curvearrowright B$ continuous actions. Assume that $\phi:(A,\alpha)\to (B,\beta)$ is an equivariantly sequentially split $*$-homomorphism. Then the induced $*$-homomorphism $\phi:A^\alpha\to B^\beta$ is sequentially split.
\end{prop}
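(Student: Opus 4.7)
The natural strategy is to take an equivariant approximate left-inverse for $\phi$ and simply restrict it to the fixed point algebra, verifying that the image ends up inside the sequence algebra of $A^\alpha$. Let $\psi : (B,\beta) \to (A_{\infty,\alpha},\alpha_\infty)$ be an equivariant approximate left-inverse for $\phi$, witnessing equivariant sequential splitness. Denote by $\psi_0 : B^\beta \to A_\infty$ the restriction of $\psi$ to the fixed point algebra.

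The key observation is that $\psi_0$ actually takes values in $(A^\alpha)_\infty$. Indeed, since $\psi$ is $\beta$-to-$\alpha_\infty$-equivariant, for any $b \in B^\beta$ and $g \in G$ one has $\alpha_{\infty,g}(\psi(b)) = \psi(\beta_g(b)) = \psi(b)$, so $\psi(b) \in (A_\infty)^{\alpha_\infty}$. By Lemma~\ref{compact fixed point sequence algebra}, the canonical inclusion $(A^\alpha)_\infty \hookrightarrow (A_\infty)^{\alpha_\infty}$ is an isomorphism, which is where the compactness of $G$ enters (via the averaging argument using normalized Haar measure used in the proof of that lemma). Hence we may regard $\psi_0$ as a $*$-homomorphism $B^\beta \to (A^\alpha)_\infty$.

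It remains to check the commuting diagram. For $a \in A^\alpha \subset A$, we have $\psi(\phi(a)) = a$ in $A_\infty$, viewed as the image of the constant sequence. Under the identification $(A^\alpha)_\infty = (A_\infty)^{\alpha_\infty}$, this is precisely the canonical embedding of $A^\alpha$ into $(A^\alpha)_\infty$. Therefore $\psi_0$ is an approximate left-inverse for the restricted map $\phi|_{A^\alpha} : A^\alpha \to B^\beta$, which proves that the latter is sequentially split.

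There is no real obstacle here once Lemma~\ref{compact fixed point sequence algebra} is in hand; the entire content of the argument is repackaging equivariance into the statement that $\psi$ maps fixed points to fixed points, combined with the identification of the fixed point algebra of the sequence algebra with the sequence algebra of the fixed point algebra in the compact group case.
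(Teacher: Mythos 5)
Your argument is correct and coincides with the paper's own proof: restrict the equivariant approximate left-inverse to $B^\beta$, note that equivariance forces the image into $(A_\infty)^{\alpha_\infty}$, and invoke Lemma~\ref{compact fixed point sequence algebra} (compactness of $G$ via Haar averaging) to identify this with $(A^\alpha)_\infty$. Nothing is missing.
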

\begin{proof}
Let $\psi:(B,\beta)\to (A_{\infty,\alpha},\alpha_\infty)$ be an equivariant approximate left-inverse for $\phi$. Passing to fixed point algebras, we obtain a commutative diagram
\[
\xymatrix{
A^\alpha \ar[rr] \ar[rd]_\phi & & (A_\infty)^{\alpha_\infty} \\
 & B^\beta \ar[ru]_\psi
}
\]
Observe that we have used the equality $(A_{\infty,\alpha})^{\alpha_\infty}=(A_\infty)^{\alpha_\infty}$. As $G$ is compact, \ref{compact fixed point sequence algebra} yields that $(A_\infty)^{\alpha_\infty}=(A^\alpha)_\infty$. This shows that $\phi:A^\alpha\to B^\beta$ is sequentially split.
\end{proof}

\begin{prop} \label{seq split stability}
Let $A$ and $B$ be \cstar-algebras, $G$ a locally compact group, and $\alpha:G\curvearrowright A$ and $\beta:G\curvearrowright B$ continuous actions. Assume that $\phi:(A,\alpha)\to (B,\beta)$ is a non-degenerate $*$-homomorphism and let $\gamma:G\curvearrowright \CK$ be any continuous action on the compact operators on some separable Hilbert space. Then $\phi$ is equivariantly sequentially split if and only if
\[
\phi\otimes \id_{\CK}:(A\otimes \CK,\alpha\otimes \gamma)\to (B\otimes \CK,\beta\otimes \gamma)
\]
is equivariantly sequentially split.
\end{prop}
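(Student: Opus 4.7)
The ``only if'' direction is routine. Given an equivariant approximate left-inverse $\psi:(B,\beta)\to(A_{\infty,\alpha},\alpha_\infty)$ for $\phi$, the plan is to form the tensor product $\psi\otimes\id_{\CK}:(B\otimes\CK,\beta\otimes\gamma)\to(A_{\infty,\alpha}\otimes\CK,\alpha_\infty\otimes\gamma)$ and compose with the canonical equivariant embedding of $A_{\infty,\alpha}\otimes\CK$ into $(A\otimes\CK)_{\infty,\alpha\otimes\gamma}$. The image lands in the continuous part because both $\alpha_\infty$ on $A_{\infty,\alpha}$ and $\gamma$ are point-norm continuous, and the resulting composition is easily checked to be the desired equivariant approximate left-inverse for $\phi\otimes\id_\CK$.

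For the ``if'' direction, suppose $\Psi:(B\otimes\CK,\beta\otimes\gamma)\to((A\otimes\CK)_{\infty,\alpha\otimes\gamma},(\alpha\otimes\gamma)_\infty)$ is an equivariant approximate left-inverse for $\phi\otimes\id_\CK$. First I would observe that since $\phi$, and hence $\phi\otimes\id_\CK$, is non-degenerate, the image of $\Psi$ is contained in $D_{\infty,A\otimes\CK}$. Proposition \ref{D with compacts} identifies this canonically with $D_{\infty,A}\otimes\CK$, and this identification is equivariant with respect to $\alpha_\infty\otimes\gamma$ on both sides. One then extends $\Psi$ strictly continuously to a unital equivariant $*$-homomorphism $\tilde\Psi:\CM(B\otimes\CK)\to\CM(D_{\infty,A}\otimes\CK)$, noting that equivariance passes through strict limits since $\gamma$ extends canonically to $\CM(\CK)$.

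The core step is to restrict $\tilde\Psi$ to $B\otimes\eins_{\CM(\CK)}\subseteq\CM(B\otimes\CK)$. A short strict-continuity argument, using that $\phi(u_\lambda)\otimes k$ converges strictly to $\eins\otimes k$ for an approximate unit $(u_\lambda)$ of $A$ together with the identity $\Psi(\phi(u_\lambda)\otimes k)=u_\lambda\otimes k$, shows that $\tilde\Psi(\eins\otimes k)=\eins_{\CM(D_{\infty,A})}\otimes k$ for every $k\in\CK$. Since $B\otimes\eins$ commutes with $\eins\otimes\CK$ inside $\CM(B\otimes\CK)$, one then invokes Corollary \ref{relative commutant compact} to conclude $\tilde\Psi(B\otimes\eins)\subseteq \CM(D_{\infty,A})\otimes\eins$, yielding a $*$-homomorphism $\psi:B\to\CM(D_{\infty,A})$ characterized by $\tilde\Psi(b\otimes\eins)=\psi(b)\otimes\eins$.

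It then remains to verify that $\psi$ is the desired equivariant approximate left-inverse. For every $b\in B$ and $k\in\CK$, the computation $\psi(b)\otimes k=\tilde\Psi(b\otimes\eins)(\eins\otimes k)=\tilde\Psi(b\otimes k)=\Psi(b\otimes k)$ belongs to $D_{\infty,A}\otimes\CK$, so applying a slice map $\id\otimes\omega$ with $\omega(k)\neq 0$ forces $\psi(b)\in D_{\infty,A}\subseteq A_\infty$. The identity $\alpha_\infty\circ\psi=\psi\circ\beta$ is inherited from the equivariance of $\tilde\Psi$ because $\eins_{\CM(\CK)}$ is $\gamma$-fixed, and combined with the norm-continuity of $\psi$ this upgrades the image to $A_{\infty,\alpha}$. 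Finally, $\psi\circ\phi$ agrees with the standard embedding, since approximating $\eins_{\CM(\CK)}$ strictly by an approximate unit of $\CK$ gives $\psi(\phi(a))\otimes\eins=\tilde\Psi(\phi(a)\otimes\eins)=a\otimes\eins$ for $a\in A$. The main technical delicacy is handling the multiplier extension together with the slice-map step that promotes $\psi(b)$ from $\CM(D_{\infty,A})$ down to $D_{\infty,A}$, and keeping track of continuity of the induced actions throughout.
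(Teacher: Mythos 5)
Your proposal is correct and follows essentially the same route as the paper: the forward direction via $\psi\otimes\id_\CK$ composed with the canonical map into $(A\otimes\CK)_{\infty,\alpha\otimes\gamma}$, and the converse by noting the image of the left-inverse lies in $D_{\infty,A\otimes\CK}$, extending strictly to multipliers, restricting to $B\otimes\eins$ and invoking Corollary \ref{relative commutant compact} to land in $\CM_\alpha(D_{\infty,A})\otimes\eins$. The only cosmetic deviations are that you make explicit the strict-limit computation $\tilde\Psi(\eins\otimes k)=\eins\otimes k$ (which the paper leaves implicit in placing the restriction inside the commutant of $\eins\otimes\CK$) and that you use a slice map to push $\psi(b)$ from $\CM(D_{\infty,A})$ into $D_{\infty,A}$, where the paper simply uses non-degeneracy of $\phi$; both variants are fine.
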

\begin{proof}
Assume that $\phi:(A,\alpha)\to (B,\beta)$ is sequentially split and find an equivariant approximate left-inverse $\psi:(B, \beta) \to (A_{\infty,\alpha}, \alpha_\infty)$. The composition of $\psi \otimes \id_\CK: (B \otimes \CK,\beta \otimes \gamma) \to (A_{\infty,\alpha} \otimes \CK, \alpha_\infty \otimes \gamma)$ with the canonical $*$-homomorphism $(A_{\infty,\alpha} \otimes \CK,\alpha_\infty \otimes \gamma) \to ((A\otimes \CK)_{\infty,\alpha \otimes \gamma},(\alpha \otimes \gamma)_\infty)$ defines an equivariant approximate left-inverse for $\phi \otimes \id_\CK$. Hence $\phi\otimes \id_{\CK}:(A\otimes \CK,\alpha\otimes \gamma)\to (B\otimes \CK,\beta\otimes \gamma)$ is sequentially split. 

Assume that $\phi\otimes \id_{\CK}:(A\otimes \CK,\alpha\otimes \gamma)\to (B\otimes \CK,\beta\otimes \gamma)$ is sequentially split and find an equivariant approximate left-inverse $\psi:(B\otimes \CK,\beta\otimes\gamma)\to ((A\otimes\CK)_{\infty,\alpha\otimes\gamma},(\alpha\otimes\gamma)_\infty)$. As the canonical inclusion $A\otimes \CK\into D_{\infty,A\otimes \CK}$ and $\phi \otimes \id_\CK$ are non-degenerate, we conclude that $\psi(B)$ is a non-degenerate \cstar-subalgebra of $D_{\infty,A\otimes \CK}$. In particular, $\psi$ extends to a strictly continuous $*$-homomorphism $\psi:\CM(B\otimes \CK)\to \CM(D_{\infty,A\otimes \CK})$. Moreover, thinking of $A\otimes \CK$ as a non-degenerate \cstar-subalgebra of $\CM(D_{\infty,A\otimes\CK})$, we also may consider $\eins \otimes \CK$ as a non-degenerate \cstar-subalgebra of $\CM(D_{\infty,A\otimes\CK})$.

Restricting $\psi:\CM(B\otimes \CK)\to \CM(D_{\infty,A\otimes \CK})$ to $B\cong B\otimes\eins \subset \CM(B\otimes \CK) \cap (\eins \otimes \CK)'$, we obtain an equivariant $*$-homomorphism
\[
\psi':(B,\beta)\to (\CM_{\alpha\otimes \gamma}(D_{\infty,A\otimes \CK})\cap (\eins \otimes \CK)',(\widetilde{\alpha\otimes \gamma})_\infty).
\]
By \ref{relative commutant compact},
\[
\CM_{\alpha\otimes \gamma}(D_{\infty,A\otimes\CK})\cap (\eins \otimes \CK)' = \CM_\alpha(D_{\infty,A})\otimes \eins,
\]
and we therefore obtain a commutative diagram
\[
\xymatrix{
 (A,\alpha) \ar[rd]_\phi \ar[rr] & &  (\CM_\alpha (D_{\infty,A}),\tilde{\alpha}_\infty) \\
 & (B,\beta) \ar[ru]_{\psi'}
}
\]
Using that $\phi$ is non-degenerate, we conclude that $\psi'(B) \subset D_{\infty,A}$. We can thus consider $\psi'$ as an equivariant $*$-homomorphism $\psi':(B,\beta) \to (A_{\infty,\alpha},\alpha_\infty)$. This shows that $\phi:(A,\alpha)\to (B,\beta)$ is sequentially split.
\end{proof}

\begin{rem}
Let $A$ be a \cstar-algebra, $G$ a locally compact, abelian group and $\alpha:G\curvearrowright A$ a continuous action. By the Takai duality theorem \cite{Takai1975}, it is well-known that $(A\rtimes_\alpha G\rtimes_{\hat{\alpha}} \hat{G}, \hat{\hat{\alpha}})$ is conjugate to $(A\otimes \CK(L^2(G)), \alpha\otimes\rho)$, where $\rho$ is the $G$-action on $\CK(L^2(G))$ induced by the right-regular representation. Moreover, this isomorphism is natural in $(A,\alpha)$. In particular, this means that for any equivariant $*$-homomorphism $\phi: (A,\alpha)\to (B,\beta)$, there are equivariant isomorphisms 
\[
\kappa_A: (A\rtimes_\alpha G\rtimes_{\hat{\alpha}} \hat{G}, \hat{\hat{\alpha}}) \stackrel{\cong}{\longrightarrow} (A\otimes\CK(L^2(G)), \alpha\otimes\rho)
\]
and
\[
\kappa_B: (B\rtimes_\beta G\rtimes_{\hat{\beta}} \hat{G}, \hat{\hat{\beta}}) \stackrel{\cong}{\longrightarrow} (B\otimes\CK(L^2(G)), \beta\otimes\rho)
\]
such that the following diagram is commutative:
\[
\xymatrix{
(A\rtimes_\alpha G\rtimes_{\hat{\alpha}} \hat{G}, \hat{\hat{\alpha}}) \ar[rr]^{\hat{\hat{\phi}}} \ar[d]_{\kappa_A} && (B\rtimes_\beta G\rtimes_{\hat{\beta}} \hat{G}, \hat{\hat{\beta}}) \ar[d]^{\kappa_B} \\
(A\otimes\CK(L^2(G)), \alpha\otimes\rho) \ar[rr]^{\phi\otimes\id_{\CK(L^2(G))}} && (B\otimes\CK(L^2(G)), \beta\otimes\rho)
}
\]
\end{rem}

Combining this with \ref{seq split stability}, we obtain the following:

\begin{cor} \label{ddual}
Let $A$ and $B$ be \cstar-algebras, $G$ a second countable, locally compact, abelian group, and $\alpha:G\curvearrowright A$ and $\beta:G \curvearrowright B$ continuous actions. Let $\phi:(A,\alpha) \to (B,\beta)$ be an equivariant $*$-homomorphism. Then $\phi$ is equivariantly sequentially split if and only if its double dual morphism
\[
\hat{\hat{\phi}}: (A\rtimes_\alpha G\rtimes_{\hat{\alpha}} \hat{G}, \hat{\hat{\alpha}})\to (B\rtimes_\beta G\rtimes_{\hat{\beta}} \hat{G}, \hat{\hat{\beta}})
\] 
is equivariantly sequentially split.
\end{cor}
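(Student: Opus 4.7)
The plan is to combine Takai duality with Proposition~\ref{seq split stability}. By the Remark immediately preceding the corollary, Takai duality supplies natural equivariant isomorphisms $\kappa_A$ and $\kappa_B$ fitting into a commutative square that identifies the double dual morphism $\hat{\hat{\phi}}$ with $\phi\otimes\id_{\CK(L^2(G))}$, viewed as an equivariant $*$-homomorphism with respect to the tensor-product actions $\alpha\otimes\rho$ and $\beta\otimes\rho$, where $\rho$ is induced by the right regular representation.

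First I would record the elementary observation that being equivariantly sequentially split is preserved under conjugation by equivariant isomorphisms. If $\psi:(B,\beta)\to(A_{\infty,\alpha},\alpha_\infty)$ is an equivariant approximate left-inverse for $\phi$, then the map induced on sequence algebras by an equivariant isomorphism $\kappa_A$ restricts to an equivariant isomorphism of the continuous parts, so $(\kappa_A)_\infty\circ\psi\circ\kappa_B^{-1}$ is an equivariant approximate left-inverse for $\kappa_B\circ\phi\circ\kappa_A^{-1}$; the converse is symmetric. Applying this to the Takai-duality diagram, $\hat{\hat{\phi}}$ is equivariantly sequentially split if and only if $\phi\otimes\id_{\CK(L^2(G))}$ is.

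The proof then concludes by invoking Proposition~\ref{seq split stability} with $\gamma=\rho$, which asserts that $\phi$ is equivariantly sequentially split if and only if $\phi\otimes\id_{\CK(L^2(G))}$ is. Chaining this with the previous equivalence gives the claim.

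The main point to watch is the non-degeneracy hypothesis in \ref{seq split stability}, which is not explicitly present in the statement of the corollary. The natural way around this is to corestrict $\phi$ to the $\beta$-invariant hereditary subalgebra $\bar{B}:=\overline{\phi(A)B\phi(A)}\subset B$, on which $\phi$ becomes non-degenerate; using \ref{eq-seq-split-hered} together with the compatibility of Takai duality with invariant hereditary subalgebras, one checks that this reduction preserves both the property of being equivariantly sequentially split and the double-dual correspondence. This is the only real bookkeeping step; once it is in place, the two-step argument above delivers the equivalence.
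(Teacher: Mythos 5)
Your main line of argument is exactly the paper's: the Remark preceding the corollary (naturality of Takai duality) identifies $\hat{\hat{\phi}}$ with $\phi\otimes\id_{\CK(L^2(G))}$ up to equivariant conjugacy, the invariance of equivariant sequential splitness under conjugation by equivariant isomorphisms is the same routine observation the paper leaves implicit, and Proposition \ref{seq split stability} with $\gamma=\rho$ then gives the equivalence. So far this matches the paper's (one-sentence) proof, including the fact that the non-degeneracy hypothesis of \ref{seq split stability} is left tacit there.

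The problem is your proposed repair of that hypothesis. The reduction to $\bar{B}=\overline{\phi(A)B\phi(A)}$ needs, in both directions of your chain, the implication ``the corestriction $A\to\bar{B}$ is (equivariantly) sequentially split $\Rightarrow$ $\phi:A\to B$ is (equivariantly) sequentially split'' (and likewise at the level of the double crossed products). Proposition \ref{eq-seq-split-hered} only gives the \emph{opposite} direction, and the converse you would have to ``check'' is false in general: already non-equivariantly (equivalently, with trivial actions), take $\phi:\IC\to\CT$ the map into the Toeplitz algebra sending $1$ to a minimal projection $p\in\CK\subset\CT$. Then $\overline{\phi(\IC)\CT\phi(\IC)}=\IC p$ and the corestriction is an isomorphism, hence sequentially split, but $\phi$ itself is not: any $*$-homomorphism from $\CT$ into the commutative algebra $\IC_\infty$ annihilates $\CK$ (mutually orthogonal, Murray--von Neumann equivalent projections must have equal and orthogonal, hence zero, images), so no approximate left-inverse can send $p$ to $\eins$. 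Thus the ``bookkeeping step'' is not bookkeeping; an approximate left-inverse defined on a hereditary subalgebra of the target does not extend. (The asserted compatibility of Takai duality with invariant hereditary subalgebras would also need an argument, but the above is the decisive failure.) The correct way to handle the hypothesis mismatch is much simpler: only the ``if'' direction of \ref{seq split stability} uses non-degeneracy, so the ``only if'' direction of the corollary holds for arbitrary $\phi$ (alternatively, apply \ref{crossed products sequential dominance} twice), while for the ``if'' direction one should simply carry the non-degeneracy assumption along, as the paper implicitly does by citing \ref{seq split stability}; note that in all uses of \ref{seq split duality} in the paper the morphisms ($\eins\otimes\id_A$ and $\iota_A$) are non-degenerate.
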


This, in turn, immediately implies the following duality:

\begin{cor} \label{seq split duality}
Let $A$ and $B$ be \cstar-algebras, $G$ a second countable, locally compact, abelian group, and $\alpha:G\curvearrowright A$ and $\beta:G \curvearrowright B$ continuous actions. Let $\phi:(A,\alpha) \to (B,\beta)$ be an equivariant $*$-homomorphism. Then $\phi$ is equivariantly sequentially split if and only if its dual $*$-homomorphism $\hat{\phi}: (A\rtimes_\alpha G, \hat{\alpha})\to (B\rtimes_\beta G, \hat{\beta})$ is ($\hat{G}$-)equivariantly sequentially split.
\end{cor}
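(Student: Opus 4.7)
The statement is essentially a formal consequence of the two results immediately preceding it, namely Proposition \ref{crossed products sequential dominance}(ii) and Corollary \ref{ddual}. My plan is to prove each direction by one application of these.

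For the forward implication, I would simply invoke Proposition \ref{crossed products sequential dominance}(ii): the hypothesis that $\phi:(A,\alpha)\to(B,\beta)$ is $G$-equivariantly sequentially split, together with the abelianness of $G$, gives directly that $\hat\phi:(A\rtimes_\alpha G,\hat\alpha)\to(B\rtimes_\beta G,\hat\beta)$ is $\hat G$-equivariantly sequentially split. No further work is needed.

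For the reverse implication, I would apply Proposition \ref{crossed products sequential dominance}(ii) a second time, this time to the map $\hat\phi$ viewed as a morphism of $\hat G$-\cstar-dynamical systems (using that $\hat G$ is again a second countable, locally compact, abelian group, and that $\hat\phi$ is equivariantly sequentially split by assumption). This yields that the double dual morphism
\[
\hat{\hat\phi}:(A\rtimes_\alpha G\rtimes_{\hat\alpha}\hat G,\hat{\hat\alpha})\to(B\rtimes_\beta G\rtimes_{\hat\beta}\hat G,\hat{\hat\beta})
\]
is equivariantly sequentially split with respect to the bidual $G$-actions. Then Corollary \ref{ddual} — which packages the Takai duality isomorphism together with \ref{seq split stability} — gives that $\hat{\hat\phi}$ being equivariantly sequentially split is equivalent to $\phi$ being equivariantly sequentially split, completing the proof.

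There is no real obstacle here; the whole point of having set up \ref{crossed products sequential dominance} and \ref{ddual} in sequence is precisely to make this duality statement a one-line corollary. The only small bookkeeping point to verify is that the naturality square in the remark preceding \ref{ddual} applied to $\hat\phi$ indeed identifies $\hat{\hat\phi}$, up to the Takai isomorphisms, with $\phi\otimes\id_{\CK(L^2(G))}$ (using Pontryagin duality $\hat{\hat G}\cong G$), so that \ref{ddual} applies verbatim.
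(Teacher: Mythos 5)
Your argument is correct and is exactly the paper's proof: the forward direction is \ref{crossed products sequential dominance}(ii), and the converse follows by applying \ref{crossed products sequential dominance}(ii) once more to $\hat{\phi}$ and then invoking \ref{ddual}, which already packages the Takai/Pontryagin identification of $\hat{\hat{\phi}}$ with $\phi\otimes\id_{\CK(L^2(G))}$. The final bookkeeping point you raise is thus subsumed in \ref{ddual} and needs no separate verification.
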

\begin{proof}
This follows from \ref{crossed products sequential dominance} and \ref{ddual}.
\end{proof}

%%%%%%%%%%%%%%%%%%%%%%%%%%%%%%%%%%%%%%%%%%%%%%%%%%%%%%%%%%%%%%%%%%%%%%%%%%%%%%%%%%%%%%%%%%%%%

\section{Applications}

\subsection{Rokhlin actions of compact groups}

\begin{defi}[cf.\ \cite{HirshbergWinter2007}]
Let $A$ be a separable \cstar-algebra and $G$ a second countable, compact group. Let $\sigma:G\curvearrowright \CC(G)$ denote the canonical $G$-shift, that is, $\sigma_g(f)=f(g^{-1}\cdot\_)$ for all $f \in \CC(G)$ and $g\in G$. A continuous action $\alpha:G\curvearrowright A$ is said to have the Rokhlin property if there exists a unital and equivariant $*$-homomorphism
\[
(\CC(G),\sigma)\to (F_{\infty,\alpha}(A),\tilde{\alpha}_\infty).
\] 
\end{defi}

The Rokhlin property turns out to fit formidably into the concept of sequentially split $*$-homomorphisms. This will be a consequence of the following equivariant generalization of \ref{unital homs in FA}. The proof is a straightforward generalization from the non-equivariant case.

\begin{lemma}
\label{eq unital homs FA}
Let $A$ be a $\sigma$-unital \cstar-algebra and $C$ a unital \cstar-algebra. Let $G$ be a second-countable, locally compact group, and let $\alpha: G\curvearrowright A$ and $\gamma: G\curvearrowright C$ be continuous actions. There exists an equivariant and unital $*$-homomorphism from $(C,\gamma)$ to $\big( F_{\infty,\alpha}(A), \tilde{\alpha}_\infty \big)$ if and only if the first-factor embedding $\id_A\otimes\eins: (A,\alpha)\to (A\otimes_{\max} C, \alpha\otimes\gamma)$ is equivariantly sequentially split.
\end{lemma}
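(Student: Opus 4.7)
The plan is to adapt the proof of Lemma~\ref{unital homs in FA} to the equivariant setting, using the equivariant multiplication map from Remark~\ref{multiplication FA continuous} for one direction, and a strict-continuity argument for the multiplier extension for the other.

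For the ``only if'' direction, suppose there is a unital equivariant $*$-homomorphism $\iota : (C,\gamma) \to (F_{\infty,\alpha}(A), \tilde{\alpha}_\infty)$. I would form the composition of $\iota \otimes \id_A$ with the equivariant $*$-homomorphism
\[
F_{\infty,\alpha}(A) \otimes_{\max} A \to A_{\infty,\alpha}
\]
provided by Remark~\ref{multiplication FA continuous}, yielding a $(\gamma \otimes \alpha)$-to-$\alpha_\infty$-equivariant $*$-homomorphism $C \otimes_{\max} A \to A_{\infty,\alpha}$. Precomposing with the canonical flip $A \otimes_{\max} C \cong C \otimes_{\max} A$ (which is $(\alpha \otimes \gamma)$-to-$(\gamma \otimes \alpha)$-equivariant) produces an equivariant approximate left-inverse for $\id_A \otimes \eins$.

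For the ``if'' direction, let $\psi : (A \otimes_{\max} C, \alpha \otimes \gamma) \to (A_{\infty,\alpha}, \alpha_\infty)$ be an equivariant approximate left-inverse. As $\id_A \otimes \eins$ is non-degenerate, so is $\psi$; in particular its image sits in $D_{\infty,A}$. I would then extend $\psi$ to the unique strictly continuous $*$-homomorphism $\bar{\psi} : \CM(A \otimes_{\max} C) \to \CM(D_{\infty,A})$. Exactly as in the proof of Lemma~\ref{unital homs in FA}, the identity $\bar{\psi}(a \otimes \eins) = a$ for $a \in A$ combined with the commutation $(\eins \otimes c)(a \otimes \eins) = (a \otimes \eins)(\eins \otimes c)$ shows that $\bar{\psi}(\eins \otimes C) \subset \CM(D_{\infty,A}) \cap A'$, which by Proposition~\ref{surjection normalizer multiplier}\ref{snm:2} is naturally $F_\infty(A)$.

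The remaining and genuinely new point, which I expect to be the main technical obstacle, is to verify that this restriction actually maps into the \emph{continuous} part $F_{\infty,\alpha}(A)$ and is $\gamma$-to-$\tilde{\alpha}_\infty$-equivariant. For each fixed $g \in G$, both $\tilde{\alpha}_{\infty,g} \circ \bar{\psi}$ and $\bar{\psi} \circ (\alpha \otimes \gamma)_g$ are strictly continuous $*$-homomorphisms from $\CM(A \otimes_{\max} C)$ to $\CM(D_{\infty,A})$ that agree on the strictly dense subalgebra $A \otimes_{\max} C$ by equivariance of $\psi$, and so agree everywhere. Specializing to $\eins \otimes c$ and using that $\bar{\psi}$ is contractive, one obtains
\[
\| \tilde{\alpha}_{\infty,g}(\bar{\psi}(\eins \otimes c)) - \bar{\psi}(\eins \otimes c) \| \;=\; \| \bar{\psi}(\eins \otimes \gamma_g(c)) - \bar{\psi}(\eins \otimes c) \| \;\leq\; \| \gamma_g(c) - c \|,
\]
which tends to $0$ as $g \to \eins_G$ by continuity of $\gamma$. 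Hence $\bar{\psi}(\eins \otimes c) \in \CM_\alpha(D_{\infty,A}) \cap A' = F_{\infty,\alpha}(A)$, and the same identity shows that the induced unital $*$-homomorphism $C \to F_{\infty,\alpha}(A)$ is equivariant, completing the proof.
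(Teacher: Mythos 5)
Your proof is correct and follows essentially the route the paper intends: the paper only remarks that the lemma is "a straightforward generalization from the non-equivariant case," and your argument is exactly that generalization of Lemma \ref{unital homs in FA}, using the equivariant multiplication map of Remark \ref{multiplication FA continuous} for one direction and the strict extension to $\CM(D_{\infty,A})$ together with Proposition \ref{surjection normalizer multiplier}\ref{snm:2} for the other. Your verification that $\bar{\psi}(\eins\otimes C)$ lands in the continuous part and is equivariant, via uniqueness of strictly continuous extensions, is precisely the extra detail the paper leaves implicit.
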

%\begin{proof}
%The ``only if'' part is clear because for every such $C$, the $*$-homomor\-phism from \ref{multiplication FA continuous} provides an equivariant approximate left-inverse for the first-factor embedding.
%So let us show the ``if'' part. Assuming that $\id_A\otimes\eins: (A,\alpha)\to (A\otimes_{\max} C,\alpha\otimes\gamma)$ is sequentially split, let $\psi: (A\otimes_{\max} C, \alpha\otimes\gamma)\to (A_{\infty,\alpha})$ be an equivariant approximate left-inverse. Since the first-factor embedding is non-degenerate, the image of $\psi$ must be contained in $D_{\infty,A}$, and the resulting $*$-homomorphism $\psi: A\otimes_{\max} C\to D_{\infty,A}$ is also non-degenerate. Let us consider the unique equivariant strictly continuous extension 
%\[
%\psi: \big( \CM(A\otimes_{\max} C), \alpha\otimes\gamma \big) \to \big( \CM_\alpha(D_{\infty,A}), \tilde{\alpha}_\infty \big).
%\]
%Since $\psi(a\otimes\eins)=a$ for every $a\in A$, it follows that $\psi(\eins\otimes C)\subset\CM_\alpha(D_{\infty,A})\cap A'$. By \ref{surjection normalizer multiplier}\ref{snm:2}, the right-hand side is naturally (and thus equivariantly) isomorphic to $F_{\infty,\alpha}(A)$, so this yields the existence of an equivariant and unital $*$-homomorphism from $(C,\gamma)$ to $\big( F_{\infty,\alpha}(A), \tilde{\alpha}_\infty \big)$.
%\end{proof}

\begin{cor} \label{seq split Rp}
Let $A$ be a separable \cstar-algebra, $G$ be a second-countable, compact group and $\alpha:G\curvearrowright A$ a continuous action. Then $\alpha$ has the Rokhlin property if and only if the second-factor embedding
\[
\eins\otimes \id_A:(A,\alpha)\into (\CC(G)\otimes A,\sigma\otimes\alpha)
\]
is sequentially split.
\end{cor}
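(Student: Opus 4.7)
The proof will be a direct application of Lemma \ref{eq unital homs FA} to the specific situation $C = \CC(G)$ and $\gamma = \sigma$. By definition, $\alpha$ has the Rokhlin property precisely when there exists a unital equivariant $*$-homomorphism $(\CC(G), \sigma) \to (F_{\infty,\alpha}(A), \tilde{\alpha}_\infty)$. Since $A$ is separable (in particular $\sigma$-unital), $\CC(G)$ is unital, and $G$ is second countable and locally compact, all the hypotheses of \ref{eq unital homs FA} are in place. Applying that lemma, the existence of such an equivariant unital $*$-homomorphism is equivalent to the equivariant sequential splitness of the first-factor embedding
\[
\id_A \otimes \eins : (A, \alpha) \to (A \otimes_{\max} \CC(G), \alpha \otimes \sigma).
\]

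The only remaining task is to translate this to the second-factor embedding as stated in the corollary. First, since $\CC(G)$ is nuclear (being commutative), the maximal and minimal tensor products agree, so $A \otimes_{\max} \CC(G) = A \otimes \CC(G)$. Second, the flip $*$-isomorphism
\[
\Sigma : (A \otimes \CC(G), \alpha \otimes \sigma) \stackrel{\cong}{\longrightarrow} (\CC(G) \otimes A, \sigma \otimes \alpha), \qquad a \otimes f \mapsto f \otimes a,
\]
is equivariant and intertwines the two embeddings in the sense that $\Sigma \circ (\id_A \otimes \eins) = \eins \otimes \id_A$. Being an isomorphism, $\Sigma$ preserves the property of being equivariantly sequentially split (it provides inverse approximate left-inverses on either side). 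Consequently, $\id_A \otimes \eins$ is equivariantly sequentially split if and only if $\eins \otimes \id_A$ is. Combining this with the preceding paragraph yields the claimed equivalence.

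No step of this is expected to present a real obstacle; all the substantive work has been carried out in \ref{eq unital homs FA}, whose proof is essentially a verbatim equivariant adaptation of the argument for \ref{unital homs in FA} (using \ref{surjection normalizer multiplier}\ref{snm:2} together with \ref{multiplication FA continuous} to move between unital equivariant maps into $F_{\infty,\alpha}(A)$ and equivariant approximate left-inverses of the first-factor embedding). The corollary itself is then merely bookkeeping about the orientation of the tensor factors.
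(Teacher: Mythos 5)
Your proposal is correct and matches the paper's intended argument: the corollary is stated without proof precisely because it is the immediate specialization of Lemma \ref{eq unital homs FA} to $(C,\gamma)=(\CC(G),\sigma)$, combined with the trivial bookkeeping you spell out (nuclearity of $\CC(G)$ to identify $\otimes_{\max}$ with $\otimes$, and the equivariant flip isomorphism to pass from the first-factor to the second-factor embedding). Nothing further is needed.
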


For the rest of this subsection, we will use this observation to provide a conceptual proof of the fact that crossed product \cstar-algebras by Rokhlin actions of compact groups inherit many properties from the coefficient \cstar-algebra. But first, we need some preparation.

\begin{nota} Let $A$ be a \cstar-algebra, $G$ a compact group and $\alpha: G\to\Aut(A)$ a continuous action. We will denote by $\bar{\alpha}\in\Aut(\CC(G)\otimes A)$ the induced automorphism given by
\[
\bar{\alpha}(f)(g) = \alpha_g(f(g))\quad\text{for all}~g\in G~\text{and}~f\in\CC(G,A).
\]
Moreover, we will denote by $\alpha_\mathrm{co}: A\to \CC(G)\otimes A$ the corresponding coaction of $\CC(G)$, viewed as a Hopf-\cstar-algebra, on $A$. That is, $\alpha_\mathrm{co}(a)(g)=\alpha_g(a)$ for all $g \in G$ and $a \in A$.
\end{nota}

The following is a well-known fact:

\begin{prop} \label{shift absorption}
Let $A$ be a \cstar-algebra, $G$ a compact group and $\alpha: G\curvearrowright A$  continuous action. Then the \cstar-dynamical system $(\CC(G)\otimes A,\sigma\otimes\id)$ is conjugate to $(\CC(G)\otimes A, \sigma\otimes\alpha)$ via $\bar{\alpha}$. 
In particular, $(\CC(G)\otimes A)\rtimes_{\sigma\otimes\alpha} G$ is always isomorphic to $A\otimes\CK(L^2(G))$.
\end{prop}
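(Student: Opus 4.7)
The plan is to verify by direct computation that the map $\bar\alpha$ defined by $\bar\alpha(f)(g) = \alpha_g(f(g))$ furnishes the desired conjugacy, and then to combine this with the standard isomorphism $\CC(G)\rtimes_\sigma G \cong \CK(L^2(G))$ to obtain the ``in particular'' statement.

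First I would verify that $\bar\alpha$ is a well-defined $*$-automorphism of $\CC(G,A) \cong \CC(G)\otimes A$. Well-definedness and continuity of $\bar\alpha(f)$ follow from continuity of $\alpha$ together with continuity of $f$; the map is clearly $*$-linear and multiplicative since $\alpha_g$ is an automorphism for each fixed $g$. An inverse is given by $\bar\alpha^{-1}(f)(g) = \alpha_{g^{-1}}(f(g))$, which is easily seen to lie in $\CC(G,A)$ by the same argument.

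Next I would check the intertwining identity $\bar\alpha \circ (\sigma_h\otimes\id) = (\sigma_h\otimes\alpha_h)\circ \bar\alpha$ for all $h\in G$. For $f \in \CC(G,A)$ and $g\in G$, one computes
\[
\bar\alpha\bigl( (\sigma_h\otimes\id)(f) \bigr)(g) = \alpha_g\bigl( f(h^{-1}g) \bigr)
\]
and
\[
\bigl( (\sigma_h \otimes \alpha_h) \circ \bar\alpha \bigr)(f)(g) = \alpha_h\bigl( \bar\alpha(f)(h^{-1}g) \bigr) = \alpha_h\bigl( \alpha_{h^{-1}g}( f(h^{-1}g) ) \bigr) = \alpha_g( f(h^{-1}g) ),
\]
which establishes the equality. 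Thus $\bar\alpha$ provides a conjugacy between $(\CC(G)\otimes A, \sigma\otimes \id)$ and $(\CC(G)\otimes A, \sigma\otimes \alpha)$.

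For the second assertion, functoriality of the crossed product under conjugacy gives
\[
(\CC(G)\otimes A)\rtimes_{\sigma\otimes\alpha} G \;\cong\; (\CC(G)\otimes A)\rtimes_{\sigma\otimes\id} G \;\cong\; (\CC(G)\rtimes_\sigma G) \otimes A.
\]
The final step is to invoke the classical fact that $\CC(G)\rtimes_\sigma G \cong \CK(L^2(G))$ for any second countable compact group $G$ (a special case of Green's imprimitivity theorem, or alternatively proved by identifying the concrete covariant representation on $L^2(G)$ that generates the compacts). Since all the routine verifications above are essentially algebraic checks and this last isomorphism is well-documented in the literature, there is no real obstacle; the only thing worth double-checking is that the conjugacy $\bar\alpha$ really is continuous in the pointwise-norm topology on $\CC(G,A)$, which follows from a straightforward $\eps$-argument using uniform continuity of $f$ together with the compactness of $G$ and point-norm continuity of $\alpha$.
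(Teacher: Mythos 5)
Your proof is correct and is exactly the standard argument; the paper in fact states this proposition as a well-known fact and gives no proof, so there is nothing to compare against beyond noting that your intertwining computation $\bar\alpha\circ(\sigma_h\otimes\id)=(\sigma_h\otimes\alpha_h)\circ\bar\alpha$ and the chain $(\CC(G)\otimes A)\rtimes_{\sigma\otimes\id}G\cong(\CC(G)\rtimes_\sigma G)\otimes A\cong\CK(L^2(G))\otimes A$ is precisely the intended reasoning. Two trivial remarks: the middle isomorphism (pulling the trivially-acted-upon factor $A$ out of the crossed product) is unproblematic here because $\CC(G)$ is nuclear and $G$ is compact, hence amenable, so there is no full/reduced or max/min ambiguity; and second countability of $G$ is not needed for $\CC(G)\rtimes_\sigma G\cong\CK(L^2(G))$, which holds for every compact (indeed locally compact) group.
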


We note that variants of the following statement have been observed by Gardella in \cite[Section 2]{Gardella2014_R} and by the second author in the proofs of \cite[2.5, 2.6]{Szabo2015}.

\begin{theorem} \label{Rokhlin fixed point seq split}
Let $A$ be a separable \cstar-algebra, $G$ a second countable, compact group and $\alpha: G\curvearrowright A$ a continuous action. Assume that $\alpha$ has the Rokhlin property. Then 
\begin{enumerate}[label=\textup{(\roman*)},leftmargin=*]
\item the inclusion map $A^\alpha\into A$ is sequentially split. \label{rpsq:1}
\item The canonical embedding $A\rtimes_\alpha G\into A\otimes \CK(L^2(G))$, given by the isomorphism $A\otimes\CK(L^2(G)) \cong (\CC(G)\otimes A)\rtimes_{\sigma\otimes\alpha} G$ and induced by the equivariant second-factor embedding $A\into\CC(G)\otimes A$, is sequentially split. \label{rpsq:2}
\end{enumerate}
\end{theorem}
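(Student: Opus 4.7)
The plan is to deduce both statements directly from Corollary \ref{seq split Rp}, which recasts the Rokhlin property as the assertion that the second-factor embedding
\[
\iota := \eins \otimes \id_A : (A,\alpha) \to (\CC(G) \otimes A, \sigma\otimes\alpha)
\]
is equivariantly sequentially split. Once this reformulation is in hand, parts \ref{rpsq:1} and \ref{rpsq:2} follow by applying, respectively, the functoriality of sequentially split $*$-homomorphisms under fixed point algebras (Proposition \ref{fixed point sequential dominance}) and under crossed products (Proposition \ref{crossed products sequential dominance}), then identifying the targets explicitly.

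For \ref{rpsq:1}, I would apply \ref{fixed point sequential dominance} to $\iota$ to conclude that the induced $*$-homomorphism $A^\alpha \to (\CC(G)\otimes A)^{\sigma\otimes\alpha}$ is sequentially split. The remaining task is to identify this target with $A$ in such a way that the induced map becomes the inclusion $A^\alpha \into A$. A direct check shows that $f\in \CC(G,A)$ is $(\sigma\otimes\alpha)$-fixed if and only if $f(h)=\alpha_h(f(e_G))$ for all $h\in G$, so that evaluation $f\mapsto f(e_G)$ yields a $*$-isomorphism $(\CC(G)\otimes A)^{\sigma\otimes\alpha}\stackrel{\cong}{\longrightarrow} A$ with inverse $a\mapsto \alpha_{\mathrm{co}}(a)$. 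Under this identification, $\iota(a)=\eins\otimes a$ for $a\in A^\alpha$ corresponds to $a$, since $\eins\otimes a = \alpha_{\mathrm{co}}(a)$ whenever $a$ is $\alpha$-fixed. Thus the induced map is precisely the inclusion $A^\alpha \into A$, proving \ref{rpsq:1}.

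For \ref{rpsq:2}, I would instead apply \ref{crossed products sequential dominance} to $\iota$, which gives that
\[
\iota \rtimes G : A\rtimes_\alpha G \to (\CC(G)\otimes A)\rtimes_{\sigma\otimes\alpha} G
\]
is sequentially split. Composing with the natural isomorphism provided by Proposition \ref{shift absorption}, namely $(\CC(G)\otimes A)\rtimes_{\sigma\otimes\alpha} G \cong A\otimes\CK(L^2(G))$, recovers precisely the canonical embedding described in the statement. The main piece of bookkeeping is tracing through the conjugation $\bar{\alpha}$ used in \ref{shift absorption} to confirm that this composition agrees with the canonical embedding as described; no genuine analytic obstacle arises, as the substantive work has already been carried out in the preceding propositions.
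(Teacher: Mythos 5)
Your proposal is correct and follows essentially the same route as the paper: both parts are deduced from Corollary \ref{seq split Rp} via Propositions \ref{fixed point sequential dominance} and \ref{crossed products sequential dominance}, with the identification $(\CC(G)\otimes A)^{\sigma\otimes\alpha}\cong A$ (inverse $\alpha_{\mathrm{co}}$, restricting to $\eins\otimes a$ on $\alpha$-fixed elements) for \ref{rpsq:1} and the isomorphism of \ref{shift absorption} for \ref{rpsq:2}, exactly as in the paper's argument.
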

\begin{proof}
\ref{rpsq:1}: Since $\alpha$ has the Rokhlin property,  \ref{seq split Rp} implies that
\[
\eins\otimes \id_A:(A,\alpha)\into (\CC(G)\otimes A,\sigma\otimes \alpha)
\] 
is sequentially split. As $G$ is compact, we can apply \ref{fixed point sequential dominance} and conclude that
\[
\eins\otimes \id_A: A^\alpha\into (\CC(G)\otimes A)^{\sigma\otimes \alpha}
\]
is also sequentially split. One easily checks that a function $f\in \CC(G,A)$ is fixed under $\sigma \otimes \alpha$ if and only if there is some $a\in A$ such that $f(g) = \alpha_g(a)$ for all $g \in G$. In particular, $\alpha_{\mathrm{co}}: A \to (\CC(G) \otimes A)^{\sigma \otimes \alpha}$ is an isomorphism. Moreover the following diagram commutes:
\[
\xymatrix@C+0.3cm{
A^\alpha \ar@^{(->}[r] \ar@^{(->}[d] & A \ar[d]^{\alpha_{\mathrm{co}}}_\cong \\
A \ar@^{(->}[r]^/-0.5cm/{\eins\otimes\id_A} & (\CC(G)\otimes A)^{\sigma\otimes \alpha}
}
\]
This shows that the canonical embedding $A^\alpha\into A$ is sequentially split.

The second statement \ref{rpsq:2} follows directly from \ref{crossed products sequential dominance}, \ref{seq split Rp} and \ref{shift absorption}.
\end{proof}

The following result arises as an immediate consequence, and generalizes many permanence property results of \cite{OsakaPhillips2012, Santiago2014, HirshbergWinter2007, Gardella2014_R}. The statement about the UCT is a significant improvement of the main result of \cite{Szabo2015}.

\begin{cor}
Let $A$ be a separable \cstar-algebra, $G$ a second-countable, compact group and $\alpha: G\curvearrowright A$ a continuous action with the Rokhlin property. 
Then all the properties listed in \ref{sequential dominance properties 2} pass from $A$ to $A^\alpha$ and from $A\otimes\CK(L^2(G))$ to $A\rtimes_\alpha G$. Moreover, if $A$ is nuclear and satisfies the UCT, then so do $A^\alpha$ and $A\rtimes_\alpha G$.
\end{cor}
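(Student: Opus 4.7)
The plan is to reduce the statement entirely to the already-established combination of Theorem \ref{Rokhlin fixed point seq split}, Theorem \ref{sequential dominance properties 2} and Theorem \ref{UCT}. Since the Rokhlin assumption has already been translated into the existence of two sequentially split $*$-homomorphisms, all the work is really about invoking the general permanence machinery correctly on each of them.

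First, by Theorem \ref{Rokhlin fixed point seq split}\ref{rpsq:1}, the canonical inclusion $A^\alpha \hookrightarrow A$ is a sequentially split $*$-homomorphism between separable \cstar-algebras. Applying Theorem \ref{sequential dominance properties 2} item-by-item to this map yields that every property listed there passes from $A$ to $A^\alpha$. Similarly, by Theorem \ref{Rokhlin fixed point seq split}\ref{rpsq:2}, the canonical embedding $A\rtimes_\alpha G \hookrightarrow A\otimes\CK(L^2(G))$ is sequentially split, so another application of Theorem \ref{sequential dominance properties 2} gives that each of those properties passes from $A\otimes\CK(L^2(G))$ to $A\rtimes_\alpha G$.

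For the statement about the UCT, the plan is identical: apply Theorem \ref{UCT} to the two sequentially split $*$-homomorphisms above. For the fixed point algebra this is immediate; for the crossed product one first needs the elementary observation that nuclearity and the UCT are both preserved under tensoring with $\CK(L^2(G))$, so that $A\otimes\CK(L^2(G))$ inherits nuclearity and the UCT from $A$. Then Theorem \ref{UCT} transfers these properties to $A\rtimes_\alpha G$.

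There is essentially no obstacle to carrying out this argument, since the whole conceptual content has already been isolated in the earlier theorems; the corollary is really an advertisement of the framework. The only small point to be careful about is the separability hypothesis required in some parts of Theorem \ref{sequential dominance properties 2}, which is guaranteed here because $A$ is separable and $G$ is second countable, forcing $A^\alpha$, $A\otimes\CK(L^2(G))$ and $A\rtimes_\alpha G$ to be separable as well.
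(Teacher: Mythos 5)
Your proposal is correct and is exactly the argument the paper intends: the corollary is stated as an immediate consequence of Theorem \ref{Rokhlin fixed point seq split} combined with Theorems \ref{sequential dominance properties 2} and \ref{UCT}, which is precisely your reduction. Your added remarks (separability of $A^\alpha$, $A\rtimes_\alpha G$ and $A\otimes\CK(L^2(G))$, and that nuclearity and the UCT pass from $A$ to $A\otimes\CK(L^2(G))$ before invoking Theorem \ref{UCT} for the crossed product) are the right small points to check and cause no difficulty.
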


\begin{rem}
Another consequence of \ref{Rokhlin fixed point seq split} (together with \ref{sequential dominance properties 1}\ref{sdp1:6}) is that the canonical inclusion $A^\alpha\into A$ is injective in $K$-theory, whenever $\alpha$ has the Rokhlin property. In the case that $G$ is finite and the \cstar-algebra $A$ is unital and simple, this was shown by Izumi \cite[3.13]{Izumi2004}. Izumi moreover proved that for Rokhlin actions of finite groups, the image $K_*(A^\alpha)\to K_*(A)$ coincides with the subgroup of fixed points of the induced action on the $K$-theory group $K_*(A)$. This striking result allows one, in contrast to many situations where the Rokhlin property is absent, to determine the $K$-theory of the crossed product in a very straightforward manner. Using the language of sequentially split $*$-homomorphisms, we will now see that this generalizes to the case of compact group actions with the Rokhlin property on separable \cstar-algebras.
\end{rem}

\begin{theorem} \label{Rokhlin K-theory}
Let $A$ be a separable \cstar-algebra, $G$ a second countable, compact group and $\alpha:G\curvearrowright A$ a continuous action. If $\alpha$ has the Rokhlin property, then
\[
\im(K_*(A^\alpha)\into K_*(A)) = \set{x\in K_*(A) ~|~ K_*(\alpha_\mathrm{co})(x) = K_*(\eins_{\CC(G)}\otimes \id_A)(x)}.
\]
The analogous statement is true for $K$-theory with coefficients.
\end{theorem}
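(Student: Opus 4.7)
The plan is to prove the two inclusions of the stated equality separately. The forward inclusion ``$\subseteq$'' is immediate: for $a \in A^\alpha$ one has $\alpha_{\mathrm{co}}(a)(g) = \alpha_g(a) = a$ for every $g \in G$, so the two compositions $\alpha_{\mathrm{co}} \circ \iota$ and $(\eins_{\CC(G)} \otimes \id_A) \circ \iota$ agree as $*$-homomorphisms $A^\alpha \to \CC(G) \otimes A$, where $\iota: A^\alpha \hookrightarrow A$ denotes the inclusion. Hence every element of $\im K_*(\iota)$ automatically lies in the claimed equalizer.

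For the reverse inclusion, I will exploit the sequentially split structure set up in the proof of Theorem \ref{Rokhlin fixed point seq split}\ref{rpsq:1}. Combining Corollary \ref{seq split Rp} with Proposition \ref{fixed point sequential dominance} and Lemma \ref{compact fixed point sequence algebra}, the Rokhlin property yields an equivariant $*$-homomorphism $\psi: (\CC(G) \otimes A, \sigma \otimes \alpha) \to (A_{\infty,\alpha}, \alpha_\infty)$ such that $\psi \circ (\eins_{\CC(G)} \otimes \id_A): A \to A_\infty$ is the standard embedding, while $\Psi := \psi \circ \alpha_{\mathrm{co}}: A \to (A^\alpha)_\infty$ is an approximate left-inverse for $\iota$. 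Given $x \in K_*(A)$ satisfying the equalizer condition, applying $K_*(\psi)$ to both sides together with the injectivity of the standard embedding on $K$-theory (Theorem \ref{sequential dominance properties 1}\ref{sdp1:6}) yields the identity $K_*(\Psi)(x) = x$ in $K_*(A_\infty)$, where the left-hand side is transported from $K_*((A^\alpha)_\infty)$ via the canonical inclusion.

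The key step is then a lifting argument. Writing $x = [p] - [q] \in K_0(A)$ for projections $p, q \in M_n(A^\sim)$, I will use weak semiprojectivity of finite-dimensional C*-algebras to lift $\Psi(p)$ and $\Psi(q)$ to sequences $(p_k)_k$ and $(q_k)_k$ of projections in $M_n((A^\alpha)^\sim)$. The identity $K_0(\Psi)(x) = x$, viewed inside $K_0((A^\sim)_\infty)$, reads $[(p_k)_k] + [q] = [(q_k)_k] + [p]$; after stabilization this equation is witnessed by a partial isometry in $M_N((A^\sim)_\infty)$, which I lift entry-wise and from which a standard perturbation argument extracts, for all sufficiently large $k$, a genuine Murray--von Neumann equivalence $p_k \oplus q \oplus \eins_N \sim_{\mathrm{MvN}} q_k \oplus p \oplus \eins_N$ in $M_{2n+N}(A^\sim)$. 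Consequently $[p_k] - [q_k] = x$ in $K_0(A)$ for such $k$, and since $[p_k] - [q_k] \in K_0(A^\alpha)$, this shows $x \in \im K_0(\iota)$. The $K_1$-case is analogous, using weak semiprojectivity of $\CC(\IT)$ applied to unitary representatives.

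I expect the main obstacle to be precisely this transfer from the sequence algebra back to the algebra, since $K_*((A^\alpha)_\infty)$ is generally not algebraically determined by $K_*(A^\alpha)$; weak semiprojectivity of matrix algebras (respectively $\CC(\IT)$) is the essential tool that allows us to select a coherent representative at a single coordinate. For the statement with coefficients in $\IZ_n$ with $n \geq 2$, I will apply the result above to the inflated Rokhlin action $\alpha \otimes \id_{\CO_{n+1}}: G \curvearrowright A \otimes \CO_{n+1}$, whose fixed-point algebra is $A^\alpha \otimes \CO_{n+1}$, and transport the conclusion via the identification of $K$-theory with $\IZ_n$-coefficients with the ordinary $K$-theory of the tensor product with $\CO_{n+1}$.
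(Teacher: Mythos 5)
Your proposal is correct and follows essentially the same route as the paper: the Rokhlin property gives an equivariant approximate left-inverse $\psi$ for $\eins_{\CC(G)}\otimes\id_A$ (via \ref{seq split Rp}), equivariance together with \ref{compact fixed point sequence algebra} forces $\psi\circ\alpha_{\mathrm{co}}$ to land in $(A^\alpha)_\infty$, and weak stability of projections/partial isometries transfers the resulting identity in the sequence algebra back to a finite stage. The only cosmetic differences are that the paper transports a Murray--von Neumann equivalence (a partial isometry with fixed range projection) through $\psi$ before leaving the projection picture, which slightly streamlines the scalar-part bookkeeping you handle after lifting, and that it treats $K_1$ by suspension rather than by your direct argument with unitaries; both variants are routine and sound.
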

\begin{proof}
Clearly, every $x\in \im(K_*(A^\alpha)\into K_*(A))$ satisfies $K_*(\alpha_\mathrm{co})(x)=K_*(\eins \otimes \id_A)(x)$. For the other inclusion, let $x\in K_0(A)$ satisfy 
\[
K_0(\alpha_\mathrm{co})(x) = K_0(\eins\otimes \id_A)(x)\in K_0(\CC(G)\otimes A).
\]
Find some $n\in \IN$ and a projection $p\in M_n(\tilde{A})$ such that 
\[
x=[p]_0 - [\eps(p)]_0\in K_0(A).
\]
Here, $\eps: \tilde{A}\to \IC$ is the canonical character, which we also view as extended to $\eps: (\CC(G)\otimes A)^\sim\to\IC$. Observe that we denote the matrix amplification of a $*$-homomorphisms again by the same symbol. We have
\[
[\tilde{\alpha}_\mathrm{co}(p)] - [\eps(\tilde{\alpha}_\mathrm{co}(p))] = [(\eins\otimes \id_A)^\sim(p)] - [\eps((\eins\otimes \id_A)^\sim(p))] \in K_0(\CC(G)\otimes A).
\]
By definition of the $K_0$-group, we find $k,l\geq 0$ satisfying
\[
\tilde{\alpha}_\mathrm{co}(p)\oplus \eins_k\oplus 0_l \sim_{\mathrm{MvN}} (\eins\otimes \id_A)^\sim(p)\oplus \eins_{k}\oplus 0_{l} \text{ in}~ M_{n+k+l}((\CC(G)\otimes A)^\sim).
\]
Write $r=n+k+l$. As $\alpha$ has the Rokhlin property and the property of being equivariantly sequentially split passes to unitazations and matrix amplifications (endowed with the respective $G$-actions), we conclude from \ref{seq split Rp} that
\[
(\eins\otimes \id_A)^\sim : (M_{r}(\tilde{A}),\tilde{\alpha})\to (M_{r}((\CC(G)\otimes A)^\sim),(\sigma\otimes \id_A)^\sim) 
\]
is sequentially split. Let 
\[
\psi:(M_{r}((\CC(G)\otimes A)^\sim),(\sigma\otimes \id_A)^\sim)\to (M_{r}(\tilde{A})_{\infty,\tilde{\alpha}},\tilde{\alpha}_\infty)
\]
be an equivariant approximate left-inverse for $(\eins\otimes \id_A)^\sim$. We have that
\[
\psi\circ \tilde{\alpha}_\mathrm{co}(p\oplus \eins_{k} \oplus 0_{l}) \sim_{\mathrm{MvN}} p\oplus \eins_{k} \oplus 0_{l} \text{ in}~ M_{r}(\tilde{A})_\infty.
\]
As $\alpha_\mathrm{co}$ maps $A$ into $(\CC(G)\otimes A)^{\sigma\otimes \alpha}$, it follows that
\[
\tilde{\alpha}_\mathrm{co}(M_{r}(\tilde{A}))\subset M_{r}((\CC(G)\otimes A)^\sim)^{(\sigma\otimes \id_A)^\sim}.
\]
By equivariance of $\psi$ and \ref{compact fixed point sequence algebra}, we get that
\[
\psi\circ\tilde{\alpha}_\mathrm{co} (M_{r}(\tilde{A})) \subset (M_{r}(\tilde{A})_\infty)^{\tilde{\alpha}_\infty} = M_{r}((A^\alpha)^\sim)_\infty. 
\]
Since the relation of being a partial isometry with a fixed range projection is weakly stable, this shows that there exists some projection $q\in M_{r}((A^\alpha)^{\sim})$ with the property that
\[
q \sim_{\mathrm{MvN}} p\oplus \eins_{k}\oplus 0_{l} ~\text{in}~ M_{r}(\tilde{A}).
\]
By definition of $K_0(A)$, we get that
\[
x = [p] - [\eps(p)] = [q] - [\eps(q)] \in K_0(A),
\]
and we conclude that $x\in \im(K_0(A^\alpha)\into K_0(A))$.

For the assertion for $K_1$, observe first that the continuous action $S\alpha: G\curvearrowright SA$ on the suspension has the Rokhlin property. The fixed point algebra $(SA)^{S\alpha}$ equals $SA^\alpha$. Therefore, the injective homomorphism 
\[
K_0((SA)^{S\alpha})\into K_0(SA)
\] 
is nothing but $K_1(A^\alpha \into A)$. One also has $(S\alpha)_\mathrm{co} = S\alpha_\mathrm{co}$. Hence, the assertion for $K_0((SA)^{S\alpha})\into K_0(SA)$ translates to
\[
\im(K_1(A^\alpha)\into K_1(A)) = \set{x\in K_1(A) ~|~ K_1(\alpha_\mathrm{co})(x) = K_1(\eins\otimes \id_A)(x)}.
\]
This concludes the proof for $K_*$. The assertion for $K$-theory with coefficients in $\IZ_n$, $n\geq 2$, follows from the $K_0$-formula with the completely analogous argument, by tensoring with the trivial action on $\CO_{n+1}$ instead of taking the suspension.
\end{proof}

%%%%%%%%%%%%%

\subsection{Inclusions of \cstar-algebras with the Rokhlin property}
\text{}

\noindent
In \cite{OsakaKodakaTeruya2009}, Osaka, Kodaka and Teruya defined the Rokhlin property for an inclusion $A\subset B$ of unital \cstar-algebras.

\begin{defi}[cf.\ \cite{Watatani1990}] Let $B$ be a unital \cstar-algebra and $A\subset B$ a unital \cstar-subalgebra. Moreover, let $E: B\to A$ be a conditional expectation. Then $E$ is said to have a quasi-basis, if there exist elements $u_1,v_1,\dots,u_n,v_n\in B$ such that
\[
x= \sum_{j=1}^n u_n E(v_nx) = \sum_{j=1}^n E(xu_n)v_n\quad\text{for all}~x\in B.
\]
In this case, one defines the Watatani index of $E$ as
\[
\ind(E) = \sum_{j=1}^n u_nv_n\in B.
\]
If $A\subset B$ is some inclusion of unital \cstar-algebras such that there exists a conditional expectation $E: B\to A$ with a quasi-basis, one also says that this inclusion has finite Watatani Index.
\end{defi}

\begin{theorem}[cf.~{\cite[1.2.8, 2.1.7, 2.3.1]{Watatani1990}}] Let $A\subset B$ be an inclusion of unital \cstar-algebras and assume that $E: B\to A$ is a conditional expectation with finite Watatani index. Then the index $\ind(E)$ does not depend on the choice of the quasi-basis and is a positive, central and invertible element of $B$.
\end{theorem}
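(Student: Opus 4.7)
The plan is to handle the three assertions in order of increasing depth: uniqueness/well-definedness first, then centrality, then positivity and invertibility. The first two follow from direct manipulations with the two quasi-basis identities, while the last is the genuinely deeper statement and requires additional structure.

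For uniqueness, suppose $\{u_j,v_j\}_{j=1}^n$ and $\{p_k,q_k\}_{k=1}^m$ are two quasi-bases for the same $E$. I would compute
\[
\sum_{k=1}^m p_k q_k = \sum_{k=1}^m\Bigl(\sum_{j=1}^n u_j E(v_j p_k)\Bigr) q_k = \sum_{j=1}^n u_j \Bigl(\sum_{k=1}^m E(v_j p_k)q_k\Bigr) = \sum_{j=1}^n u_j v_j,
\]
where the outer substitution uses the first identity for $\{u_j,v_j\}$ and the inner sum collapses by the second identity for $\{p_k,q_k\}$ applied to $x=v_j$. In particular $\ind(E)$ is unambiguous. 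As a by-product, taking adjoints in $x^* = \sum_j u_j E(v_j x^*)$ and using $E(y^*)=E(y)^*$ shows that $\{v_j^*,u_j^*\}$ is also a quasi-basis, so $\ind(E) = \sum_j v_j^* u_j^* = \ind(E)^*$; this delivers self-adjointness.

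For centrality, fix $b\in B$ and expand both $b\cdot\ind(E) = \sum_j (bu_j) v_j$ and $\ind(E)\cdot b = \sum_j u_j(v_j b)$ using the two quasi-basis identities applied to $bu_j$ and to $v_jb$ respectively. The first gives $\sum_{j,k} u_k E(v_k b u_j) v_j$, the second gives $\sum_{j,k} u_j E(v_j b u_k) v_k$; after a relabeling of summation indices these coincide, so $b\cdot \ind(E) = \ind(E)\cdot b$. Thus $\ind(E)$ lies in the center of $B$.

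The main obstacle is positivity and invertibility, which do not follow from the quasi-basis identities alone and require the basic construction $A \subset B \subset \langle B,e_A\rangle$ with its Jones projection $e_A$ satisfying $e_A x e_A = E(x)e_A$. The standard route I would follow is to produce a \emph{symmetric} quasi-basis, i.e.\ elements $w_1,\dots,w_n\in B$ with $\{w_j,w_j^*\}$ a quasi-basis for $E$; by uniqueness the index is then $\sum_j w_j w_j^*$, which is manifestly a positive element of $B$. Such a symmetric quasi-basis is obtained from any given one by a Gram–Schmidt-type procedure inside the basic construction: one shows $\eins = \sum_j w_j e_A w_j^*$ with $w_j\in B$, which translates, via the defining property of $e_A$, exactly into a symmetric quasi-basis identity for $E$. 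Finally, invertibility of $\ind(E)$ follows once positivity is in hand together with the Pimsner–Popa inequality $E(x^*x) \geq \ind(E)^{-1} x^*x$, which applied to $x=\eins$ yields $\ind(E)\geq \eins$; combined with centrality, this forces $\ind(E)$ to be invertible in $B$. The delicate point where more than bookkeeping is needed is precisely the existence of the symmetric quasi-basis, which is the substantive content of the theorem and where Watatani's results \cite{Watatani1990} would be invoked.
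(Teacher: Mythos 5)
The paper does not actually prove this statement: it is quoted as background from Watatani \cite{Watatani1990} (items 1.2.8, 2.1.7, 2.3.1), so there is no internal argument to compare against. Your computations for the elementary parts are correct and are the standard ones: the substitution argument shows independence of the quasi-basis, passing to the adjoint family $\{v_j^*,u_j^*\}$ gives self-adjointness, and the double expansion with relabelling of indices gives centrality. You also correctly identify that positivity and invertibility are the substantive part, requiring the basic construction and a symmetric quasi-basis $\{w_k,w_k^*\}$, and it is legitimate to defer that existence statement to Watatani.

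One step is circular as written, though: you propose to get invertibility from the Pimsner--Popa inequality $E(x^*x)\ge \ind(E)^{-1}x^*x$, but that inequality already presupposes that $\ind(E)$ is invertible. The version available before invertibility is known is the scalar estimate $E(x^*x)\ge\|\ind(E)\|^{-1}x^*x$, and applying it at $x=\eins$ only yields $\|\ind(E)\|\ge 1$, which does not exclude $0$ from the spectrum of a central positive element. The gap closes immediately from the material you already set up: a symmetric quasi-basis is equivalent to the identity $\eins=\sum_k w_k e_A w_k^*$ in the basic construction, and since $e_A\le\eins$ this gives $\eins\le\sum_k w_k w_k^*=\ind(E)$ there, hence in $B$ (a unital \cstar-subalgebra). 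Thus $\ind(E)\ge\eins$, which delivers positivity and invertibility at once, with no appeal to Pimsner--Popa.
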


\begin{defi}[cf.~{\cite[3.1]{OsakaKodakaTeruya2009}}]
Let $A\subset B$ be an inclusion of separable, unital \cstar-algebras and assume that $E: B\to A$ is a conditional expectation with finite Watatani index. Denote by $E_\infty: B_\infty\to A_\infty$ the canonical extension of $E$ to the sequence algebras given by componentwise application of $E$. The conditional expectation $E$ is said to have the Rokhlin property, if there exists a projection $p\in B_\infty\cap B'$ with $E_\infty(p)=\ind(E)^{-1}$ and such that the map $x\mapsto px$ is injective on $B$.
\end{defi}

\begin{prop}[cf.~{\cite[3.6]{OsakaKodakaTeruya2009}}]
Let $A\subset B$ be an inclusion of separable, unital \cstar-algebras and assume that $E:A\to B$ is a conditional expectation with the Rokhlin property. Then any conditional expectation from $A$ to $B$ with finite Watatani index has the Rokhlin property. Hence, the Rokhlin property is a property of inclusions of separable, unital \cstar-algebras.
\end{prop}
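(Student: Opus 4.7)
The plan is to reduce the statement to the Watatani parametrization of the set of index-finite conditional expectations from $B$ onto $A$. Concretely, \cite{Watatani1990} shows that, after fixing one such $E_1$, every other index-finite conditional expectation $E_2: B\to A$ is of the form $E_2(\cdot) = E_1(h\cdot)$ for a unique positive invertible element $h\in A'\cap B$ with $E_1(h)=\eins$, and the Watatani indices are related by an explicit formula expressing $\ind(E_2)$ in terms of $\ind(E_1)$ and $h$. These structural facts reduce the comparison of the Rokhlin condition for two different conditional expectations to an algebraic transformation governed by $h$.

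Given $E_1$ with the Rokhlin property, witnessed by a projection $p\in B_\infty\cap B'$ satisfying $(E_1)_\infty(p)=\ind(E_1)^{-1}$ and with $x\mapsto px$ injective on $B$, the task is to produce an analogous Rokhlin projection $q$ for an arbitrary index-finite conditional expectation $E_2$. I would seek $q$ either as $p$ itself, or as a modification of $p$ by a positive invertible factor built from $h$, $\ind(E_1)$ and $\ind(E_2)$. The key observation making any such manipulation feasible is that $p$ commutes with every element of $B$ in $B_\infty$; in particular $hp=ph$, so products and functional-calculus expressions involving $p$ and $h$ satisfy the expected algebraic identities inside $B_\infty$.

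The main obstacle I anticipate is verifying the correct normalization $(E_2)_\infty(q)=\ind(E_2)^{-1}$ for the constructed projection $q$. This depends on Watatani's precise index formula, and the computation makes essential use of both the centrality of $\ind(E)$ in $B$ (so that one may move $\ind(E_1)$ and $\ind(E_2)$ freely past $p$ and $h$) and the commutation $hp=ph$ in $B_\infty$ just noted. Once this normalization is in place, the remaining two conditions are inherited from the corresponding properties of $p$: membership in $B_\infty\cap B'$ is preserved because the adjusting element lies in $B$ and any invertible function of it commutes with $B$, and the injectivity of left multiplication on $B$ is preserved because the adjusting element is invertible. The concluding assertion of the statement, namely that the Rokhlin property is intrinsic to the inclusion $A\subset B$, is then immediate.
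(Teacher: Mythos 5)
First, a point of comparison: the paper offers no proof of this proposition at all --- it is quoted directly from \cite[3.6]{OsakaKodakaTeruya2009} --- so your sketch has to stand on its own, and as written it does not. The decisive gap is the construction of the Rokhlin projection for the second expectation $E_2$. A Rokhlin projection must be a \emph{projection}, but ``modifying $p$ by a positive invertible factor built from $h$, $\ind(E_1)$ and $\ind(E_2)$'' destroys idempotency: if $c$ is positive, invertible and commutes with $p$, then $(cp)^2=c^2p\neq cp$ unless $c$ acts as the identity on $p$, i.e.\ unless the modification is vacuous. So the only candidate your outline actually produces is $q=p$ itself, and for that choice the normalization is not a routine computation but the entire content of the statement: writing $E_2=E_1(h\,\cdot)$ with $h\in A'\cap B$ positive invertible and $E_1(h)=\eins$, one has $(E_2)_\infty(p)=(E_1)_\infty(hp)$, and by the rigidity statement \ref{Rokhlin inclusion inv} together with injectivity of $x\mapsto xp$ on $B$, the required identity $(E_2)_\infty(p)=\ind(E_2)^{-1}$ holds if and only if $hp=\ind(E_1)\ind(E_2)^{-1}p$, equivalently if and only if $h$ \emph{is} the central element $\ind(E_1)\ind(E_2)^{-1}$. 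Nothing in the Radon--Nikodym parametrization gives this: moreover the ``explicit index formula'' you invoke is not of the clean form claimed --- for a quasi-basis $\{u_i,v_i\}$ of $E_1$ one gets $\ind(E_2)=\sum_i u_i h^{-1}v_i$, which cannot be expressed through $\ind(E_1)$ and $h$ alone unless $h$ is already known to be central. Establishing this compatibility of $h$ with the index (this is where the Rokhlin hypothesis must genuinely be used, beyond the trivial commutation $hp=ph$) is exactly the missing argument.

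The parts you do verify are the easy ones: membership of the candidate in $B_\infty\cap B'$ and the injectivity of left multiplication are immediate for $q=p$, so they carry no weight once the construction itself is in doubt. To see that the difficulty is real and not just bookkeeping, note that when $A'\cap B$ contains non-central elements, or when $\ind(E_2)$ fails to be the expected central rescaling of $\ind(E_1)$, the computation above simply does not close up for $q=p$, and no alternative projection is proposed. A correct proof must use the Rokhlin projection to control the Radon--Nikodym derivative $h$ (as is done in the cited argument of Osaka--Kodaka--Teruya), and that step is absent from your proposal.
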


\begin{rem}
It turns out that this notion is indeed a generalization of finite group actions with the Rokhlin property. It is a simple exercise to show the following, see also \cite[3.2]{OsakaKodakaTeruya2009}:

Let $\alpha: G\curvearrowright A$ be a finite group action on a separable, unital, simple \cstar-algebra. Consider the conditional expectation $E: A\to A^\alpha$ onto the fixed point algebra given by $E(a)=|G|^{-1}\sum_{g\in G}\alpha_g(a)$, which is well-known to have finite index $|G|^{-1}\eins_A$. Then the inclusion $A^\alpha\subset A$ has the Rokhlin property if and only if the action $\alpha$ has the Rokhlin property. 
\end{rem}

\begin{prop}[cf.~{\cite[5.1]{OsakaKodakaTeruya2009}} with proof] \label{Rokhlin inclusion inv}
Let $A\subset B$ be an inclusion of separable, unital \cstar-algebras. Assume that $E: B\to A$ is a conditional expectation with finite Watatani index. Denote by $E_\infty: B_\infty\to A_\infty$ the canonical extension of $E$ to the sequence algebras given by componentwise application of $E$. Assume that this inclusion has the Rokhlin property with a Rokhlin projection $p\in B_\infty\cap B'$. Then for each $x\in B_\infty$, the product $\ind(E)\cdot E_\infty(xp)$ is the unique element $y\in A_\infty$ satisfying the equation $xp=yp$.
\end{prop}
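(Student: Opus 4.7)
The plan is to dispose of uniqueness and the formula first (both are immediate consequences of $A_\infty$-bimodularity of $E_\infty$), and then to verify that the candidate $y_0:=\ind(E)\cdot E_\infty(xp)$ genuinely satisfies $xp=y_0p$, which is the nontrivial content.

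For uniqueness, if $y,y'\in A_\infty$ both satisfy $yp=y'p$, then $(y-y')p=0$, and applying $E_\infty$ together with its $A_\infty$-bimodularity and the identity $E_\infty(p)=\ind(E)^{-1}$ gives $(y-y')\ind(E)^{-1}=0$; since $\ind(E)$ is invertible and central, $y=y'$. For the formula, applying $E_\infty$ to a hypothetical equation $xp=yp$ yields $E_\infty(xp)=y\,E_\infty(p)=y\,\ind(E)^{-1}$, so $y=\ind(E)\cdot E_\infty(xp)$ by centrality of $\ind(E)$.

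It thus remains to verify $xp=y_0p$. Setting $t:=xp-y_0p\in B_\infty$, a direct computation using the above identities gives $E_\infty(t)=0$. To upgrade this to $t=0$ I would invoke the Pimsner--Popa-type inequality $b^*b\leq\ind(E)\cdot E(b^*b)$ in $B$, which is a consequence of finite Watatani index and extends componentwise to $z^*z\leq\ind(E)\cdot E_\infty(z^*z)$ for every $z\in B_\infty$. In particular $E_\infty$ is faithful on positive elements, so it suffices to show $E_\infty(t^*t)=0$.

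The main obstacle is the latter computation. Writing $t=wp$ with $w:=x-y_0\in B_\infty$ gives $t^*t=pw^*wp$, and since $p$ need not commute with elements of $A_\infty$ (only with $B$), one cannot directly apply bimodularity of $E_\infty$ to extract terms from between copies of $p$. My approach is to choose a quasi-basis $\{(u_j,v_j)\}_{j=1}^n$ of $E$, expand $x=\sum_j u_j\,E_\infty(v_jx)$, and repeatedly exploit that each $u_j,v_j\in B$ commutes with $p$, that $E_\infty$ is $A_\infty$-bimodular, and that $\ind(E)$ is central, in order to rewrite $E_\infty(t^*t)$ in a form that the Rokhlin property---specifically injectivity of $b\mapsto bp$ on $B$ combined with $E_\infty(p)=\ind(E)^{-1}$---forces to be zero. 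If the direct algebraic argument proves cumbersome, I would fall back on a representative-sequence computation: choose representatives $p=[(p_n)_n]$ and $x=[(x_n)_n]$ with $x_n\in B$, and show via approximate centrality of $p_n$ in $B$, the convergence $E(p_n)\to\ind(E)^{-1}$, and the Pimsner--Popa estimate applied at each level, that $\bigl\|x_np_n-\ind(E)\cdot E(x_np_n)p_n\bigr\|\to 0$, which is the finite-$n$ form of $t=0$.
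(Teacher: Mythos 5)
Your treatment of uniqueness and of the formula is correct: $A_\infty$-bimodularity of $E_\infty$, the identity $E_\infty(p)=\ind(E)^{-1}$, and invertibility and centrality of $\ind(E)$ do give that any $y\in A_\infty$ with $xp=yp$ must equal $\ind(E)\cdot E_\infty(xp)$, and the Pimsner--Popa inequality does make $E_\infty$ faithful. But the entire nontrivial content of the proposition is the existence statement $xp=\ind(E)E_\infty(xp)p$, and this you have not proved. You reduce it to showing $E_\infty(t^*t)=0$ for $t=(x-\ind(E)E_\infty(xp))p$, correctly identify the obstruction --- $p$ commutes only with the constant copy of $B$, not with $A_\infty$ --- and then offer only an intention (``rewrite $E_\infty(t^*t)$ in a form that the Rokhlin property forces to be zero'') rather than an argument. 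This is precisely the step for which the paper defers to the proof in Osaka--Kodaka--Teruya; it is not a formal consequence of bimodularity, centrality of $\ind(E)$, and faithfulness of $E_\infty$.

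Neither of your proposed routes visibly closes this gap. Quasi-basis expansions always leave elements of $A_\infty$ (such as $E_\infty(xp)$ or $E_\infty(v_jx)$) sandwiched between copies of $p$, where neither bimodularity nor any commutation hypothesis applies; and the injectivity of $b\mapsto bp$ cannot be invoked for the element $x-\ind(E)E_\infty(xp)$, which does not lie in $B$. The representative-sequence fallback has the same defect in sharper form: approximate centrality of $p_n$ holds only against \emph{fixed} elements of $B$, so it gives no control over commutators with the $n$-dependent elements $x_n$ or $E(x_np_n)$, and applying the Pimsner--Popa estimate levelwise merely converts the problem back into estimating $E$ of the same expression. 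The actual mechanism is that the exact equality $E_\infty(p)=\ind(E)^{-1}$ in $A_\infty$ forces hidden structural relations on $p$: for instance, for a $\IZ/2$-action $\alpha$ with $A=B^\alpha$ and $E=\frac12(\mathrm{id}+\alpha)$ it forces $\alpha_\infty(p)=\eins-p$, from which $\ind(E)E_\infty(xp)p=xp+\alpha_\infty(x)(\eins-p)p=xp$ follows; extracting the analogue of this for a general finite-index inclusion is the substance of the cited proof. A useful intermediate step you could have isolated (but did not) is that, by the quasi-basis identity $x=\sum_jE_\infty(xu_j)v_j$ and left $A_\infty$-linearity of both sides of the desired identity, it suffices to treat constant elements $x=b\in B$; but even that case is not covered by your sketch, so the proof as proposed is incomplete at its crucial point.
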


As it turns out, inclusions with the Rokhlin property are always sequentially split:

\begin{theorem} \label{Rokhlin inclusions}
Let $A\subset B$ be an inclusion of separable, unital \cstar-algebras with the Rokhlin property. Then the inclusion $*$-homomorphism is sequentially split.
\end{theorem}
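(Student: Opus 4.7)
The plan is to use the Rokhlin projection $p\in B_\infty\cap B'$ to write down an explicit approximate left-inverse $\psi: B\to A_\infty$ for the inclusion, directly mimicking what one does in the finite group case. Fix a conditional expectation $E: B\to A$ with the Rokhlin property as supplied by the hypothesis, and define
\[
\psi(b):=\ind(E)\cdot E_\infty(bp) \quad\text{for all } b\in B.
\]
Proposition \ref{Rokhlin inclusion inv} ensures that this formula takes values in $A_\infty$ and that $\psi(b)$ is uniquely characterized inside $A_\infty$ by the identity $bp=\psi(b)p$. This uniqueness will be the main tool for verifying the algebraic properties of $\psi$.

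Linearity is built into the formula. Next, I would verify that $\psi$ agrees with the identity on $A$: for $a\in A$, using $A$-bilinearity of $E$, centrality of $\ind(E)$ in $B$, and $E_\infty(p)=\ind(E)^{-1}$, one has
\[
\psi(a) = \ind(E)\cdot a\cdot E_\infty(p) = a\cdot\ind(E)\cdot\ind(E)^{-1} = a.
\]
For the adjoint, I would compute directly from the defining formula, using that $p$ and $\ind(E)$ are both self-adjoint, that $p$ commutes with $B$, and that $\ind(E)$ is central in $B$ and hence in $B_\infty$:
\[
\psi(b)^* = E_\infty((bp)^*)\,\ind(E) = E_\infty(b^*p)\,\ind(E) = \ind(E)\cdot E_\infty(b^*p) = \psi(b^*).
\]

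The one step that I expect to be the main obstacle is multiplicativity, because while $p$ commutes with $B$ inside $B_\infty$, there is no reason for $p$ to commute with a general element of $A_\infty$ such as $\psi(b_2)$. To circumvent this, I would only slide $p$ past elements of $B$: for $b_1,b_2\in B$,
\[
\psi(b_1)\psi(b_2)\,p = \psi(b_1)\,b_2p = \psi(b_1)\,p\,b_2 = b_1p\,b_2 = b_1b_2\,p,
\]
where the first equality uses $\psi(b_2)p=b_2p$, the second and fourth exploit $p\in B'$, and the third uses $\psi(b_1)p=b_1p$. Since $\psi(b_1)\psi(b_2)\in A_\infty$, applying the uniqueness statement of Proposition \ref{Rokhlin inclusion inv} to $x=b_1b_2$ forces $\psi(b_1b_2)=\psi(b_1)\psi(b_2)$. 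This yields a $*$-homomorphism $\psi: B\to A_\infty$ with $\psi|_A=\id_A$, which is precisely what is required to conclude that the inclusion $A\hookrightarrow B$ is sequentially split.
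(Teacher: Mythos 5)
Your proposal is correct and is essentially the paper's own proof: both define $\psi(b)=\ind(E)\cdot E_\infty(bp)$, use the uniqueness statement of Proposition \ref{Rokhlin inclusion inv} together with $p\in B_\infty\cap B'$ to get multiplicativity from the computation $\psi(b_1)\psi(b_2)p=b_1b_2p$, and check $\psi|_A=\id_A$ directly. The only cosmetic difference is that you verify $*$-preservation by hand where the paper simply notes that $\psi$ is a u.c.p.\ map.
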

\begin{proof}
Let $p\in B_\infty\cap B'$ be a Rokhlin projection. Define $\psi: B\to A_\infty$ via $\psi(x)=\ind(E)\cdot E_\infty(xp)$. Then this is obviously a u.c.p.~map. Since $p$ is in the central sequence algebra of $B$, we apply \ref{Rokhlin inclusion inv} and obtain
\[
\psi(xy)p = xyp = xpy = \psi(x)py = \psi(x)yp = \psi(x)\psi(y)p
\]
for all $x,y\in B$. By uniqueness, we obtain $\psi(xy)=\psi(x)\psi(y)$ for all $x,y\in B$, and thus $\psi$ is a unital $*$-homomorphism. Lastly, observe that \[
\psi(a)=\ind(E)E_\infty(ap) = \ind(E)E_\infty(pa) = \ind(E)E_\infty(p)a = a
\] 
for all $a\in A$. This finishes the proof. 
\end{proof}

Combining this observation with the permanence properties established in the second section, we can recover and extend the main results of \cite{OsakaKodakaTeruya2009, OsakaTeruya2011, OsakaTeruya2014}:

\begin{cor}[cf.~\cite{OsakaKodakaTeruya2009, OsakaTeruya2011, OsakaTeruya2014}]
Let $A\subset B$ be an inclusion of separable, unital \cstar-algebras with the Rokhlin property. If $B$ satisfies any of the properties listed in \ref{sequential dominance properties 2}, then so does $A$. Moreover, if $B$ is nuclear and satisfies the UCT, then so does $A$.
\end{cor}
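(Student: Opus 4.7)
The plan is essentially a one-step reduction: combine the previously established Theorem \ref{Rokhlin inclusions} with the two main transfer theorems of Section~2.

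First, I would invoke Theorem \ref{Rokhlin inclusions}, which says precisely that whenever $A \subset B$ is an inclusion of separable unital \cstar-algebras with the Rokhlin property in the sense of Osaka--Kodaka--Teruya, the inclusion $*$-homomorphism $\iota: A \hookrightarrow B$ is sequentially split. Note that $A$, $B$, and $\iota$ are all unital, so the hypotheses that accompany several of the items in Theorem \ref{sequential dominance properties 2} (in particular those requiring unitality) are automatically met.

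Next, once we know that $\iota$ is sequentially split, the first assertion of the corollary is immediate from Theorem \ref{sequential dominance properties 2}: every one of the listed properties (1)--(16) passes from the target of a sequentially split $*$-homomorphism to its domain. Since $B$ is assumed to satisfy any given one of these properties, so does $A$. The second assertion, concerning nuclearity together with the UCT, follows in exactly the same manner by applying Theorem \ref{UCT} to $\iota$: that theorem states that if $B$ is nuclear and satisfies the UCT and $A$ is sequentially dominated by $B$, then $A$ is also nuclear and satisfies the UCT.

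There is no real obstacle here; the entire content of the statement has already been absorbed by the theorems proved in Sections~2 and~4. The work to be verified is essentially bookkeeping: that no extra separability or unitality hypotheses are lurking in the list of properties, and that the assumption of a conditional expectation of finite Watatani index underlying the Rokhlin property for inclusions is used only insofar as Theorem \ref{Rokhlin inclusions} already used it. Once this is noted, the proof reduces to citing \ref{Rokhlin inclusions}, \ref{sequential dominance properties 2}, and \ref{UCT} in sequence.
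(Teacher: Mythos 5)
Your proposal is correct and is exactly the paper's (implicit) argument: the corollary is stated as an immediate consequence of Theorem \ref{Rokhlin inclusions} combined with Theorems \ref{sequential dominance properties 2} and \ref{UCT}, which is precisely the chain of citations you give. Nothing further is needed.
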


%%%%%%%%%%%%%

\subsection{Existential embeddings}
\text{}

\noindent
Let us briefly recall the notion of an existential embedding, which originally stems from model theory of metric structures and was introduced for \cstar-algebras in \cite{GoldbringSinclair2015}. See also \cite{FarahHartSherman2014}.

\begin{defi}[cf.~{\cite[Section 2]{GoldbringSinclair2015}}]
Let $A$ and $B$ be \cstar-algebras with an embedding $\iota: A\to B$. Then $\iota$ is called an existential embedding, if for every quantifier-free formula $\phi(\bar{x},\bar{y})$ (for tuples of variables $\bar{x}, \bar{y}$), any $n\geq 1$ and any tuple $\bar{a}$ from $A$, we have
\[
\inf\set{ \phi(\bar{a}, \bar{x}) \mid \bar{x}\in A_{\leq n} } = \inf\set{ \phi(\iota(\bar{a}), \bar{y}) \mid \bar{y}\in B_{\leq n} }.
\]
\end{defi}

As it turns out, an existential embedding into a separable \cstar-algebra is a special case of a sequentially split $*$-homomorphism. We thank Ilijas Farah for pointing this out to us. Also compare with \cite[2.14]{GoldbringSinclair2015}.

\begin{theorem} \label{ec-seq-split}
Let $A$ and $B$ be separable \cstar-algebras with an embedding $\iota: A\to B$. If $\iota$ is an existential embedding, then $\iota$ is sequentially split.
\end{theorem}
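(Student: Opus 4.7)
The plan is to construct $\psi \colon B \to A_\infty$ directly from the existential property, by choosing quantifier-free formulas that simultaneously force $\psi$ to (i)~respect polynomial relations in $B$ and (ii)~restrict to the identity on $A$. Pick sequences $(a_k)_k$ dense in the unit ball of $A$ and $(b_k)_k$ dense in the unit ball of $B$, arranged so that there is an injection $\sigma \colon \IN \to \IN$ with $\iota(a_k) = b_{\sigma(k)}$ for every $k$. Let $(p_j)_j$ enumerate all $*$-polynomials with $\IQ[i]$-coefficients in countably many formal variables $y_1, y_2, \dots$

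For each $n \in \IN$, choose $N_n \in \IN$ large enough that $p_1, \dots, p_n$ only involve variables of index $\leq N_n$ and so that $\sigma(k) \leq N_n$ for all $k \leq n$. Consider the quantifier-free formula with parameter tuple $\bar{a} = (a_1, \dots, a_n)$ from $A$ given by
\[
\phi_n(y_1, \dots, y_{N_n}; \bar{a}) \;=\; \max_{k \leq n} \|y_{\sigma(k)} - a_k\| \;+\; \max_{j \leq n} \bigl| \|p_j(\bar{y})\| - c_{j,n} \bigr|,
\]
where $c_{j,n} := \|p_j(b_1, \dots, b_{N_n})\|$ are fixed real constants. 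Evaluated in $B$ at $\bar{y} = (b_1, \dots, b_{N_n})$ with parameters $\iota(\bar{a})$, the first maximum vanishes because $b_{\sigma(k)} = \iota(a_k)$ and the second vanishes by definition of $c_{j,n}$; hence $\inf\{\phi_n(\bar{y}; \iota(\bar{a})) : \bar{y} \in B_{\leq 1}^{N_n}\} = 0$. The existential property of $\iota$ therefore produces tuples $\bar{c}^{(n)} = (c_1^{(n)}, \dots, c_{N_n}^{(n)}) \in A_{\leq 1}^{N_n}$ with $\phi_n(\bar{c}^{(n)}; \bar{a}) < 1/n$.

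Setting $\psi^{(n)}(b_i) := c_i^{(n)}$ for $i \leq N_n$ and $\psi^{(n)}(b_i) := 0$ otherwise yields a sequence of $*$-linear maps on the span of $(b_i)_i$. The control $\bigl|\|p_j(\bar{c}^{(n)})\| - \|p_j(\bar{b})\|\bigr| < 1/n$ for $j \leq n$, applied to all polynomials $p_j$, shows that the prescription $b_i \mapsto [(\psi^{(n)}(b_i))_n] \in A_\infty$ extends to a well-defined contractive $*$-homomorphism $\psi_0$ on the $\IQ[i]$-$*$-subalgebra $B_0 \subset B$ generated by $(b_i)_i$ (well-definedness because $\|p_j(\bar{b})\| = \|q_j(\bar{b})\|$ for different polynomial expressions of the same element forces $\limsup_n \|p_j(\bar{c}^{(n)}) - q_j(\bar{c}^{(n)})\| = 0$). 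Since $B_0 \subset B$ is dense, $\psi_0$ extends uniquely to a $*$-homomorphism $\psi \colon B \to A_\infty$. Finally, the first maximum in $\phi_n$ yields $\|c_{\sigma(k)}^{(n)} - a_k\| < 1/n$ whenever $k \leq n$, so $\psi(\iota(a_k)) = \psi(b_{\sigma(k)})$ equals the class of the constant sequence $a_k$ in $A_\infty$; density of $(a_k)_k$ then gives $\psi \circ \iota = \iota_{A, A_\infty}$, proving that $\iota$ is sequentially split.

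The main obstacle is the bookkeeping in the construction of the formulas $\phi_n$: a single formula at each stage must simultaneously encode enough polynomial-norm data to force multiplicativity of $\psi$ in the limit, and enough proximity data to force $\psi \circ \iota$ to be the canonical inclusion. The tricky point is that the list $(p_j)_j$ is infinite, so one must diagonalize so that every polynomial is eventually tested to arbitrary precision, while keeping each individual $\phi_n$ finite (i.e.\ still a first-order formula). Once this diagonalization is executed, the fact that the Fréchet-based norm on $A_\infty$ is a $\limsup$ is exactly what converts the $1/n$-estimates into exact equalities in the limit, so that $\psi$ is a genuine $*$-homomorphism rather than merely an asymptotic one.
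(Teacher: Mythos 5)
Your argument is correct and is essentially the paper's proof: both transfer, via the existential property, a diagonalized countable family of quantifier-free formulas with parameters from a dense subset of $A$ that encode a dense quantifier-free picture of $B$ together with the requirement of fixing $A$, and then assemble the witnesses into an approximate left-inverse $B\to A_\infty$; the only cosmetic difference is that the paper's formulas impose the $\IQ[i]$-$*$-algebra operations directly on indeterminates indexed by a dense $\IQ[i]$-$*$-subalgebra of $B$, whereas you match the norms of all $\IQ[i]$-$*$-polynomials in a dense sequence of the unit ball, which makes boundedness (indeed asymptotic isometry) explicit and renders multiplicativity automatic from polynomial evaluation. One small repair in the well-definedness step: equality of norms $\|p(\bar b)\|=\|q(\bar b)\|$ does not by itself force $\limsup_n\|p(\bar c^{(n)})-q(\bar c^{(n)})\|=0$; rather, when $p(\bar b)=q(\bar b)$ you should apply your norm-matching to the difference polynomial $p-q$, which occurs in your enumeration and has norm $0$ at $\bar b$, so the formulas force $\|p(\bar c^{(n)})-q(\bar c^{(n)})\|<1/n$ eventually --- a one-line fix entirely within your own setup.
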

\begin{proof}
Assume that $\iota$ is existential. Without loss of generality, assume that $A\subset B$ and $\iota$ is the inclusion map. As $A$ and $B$ are separable, we can choose countable, dense, $\IQ[i]$-$*$-subalgebras $A'\subset A$ and $B'\subset B$. Moreover, we may choose this in such a way that $A'=A\cap B'$. Let us consider the countable set of indeterminants $\set{X_b}_{b\in B'}$ indexed over $B'$. Then the expressions
\[
\begin{array}{ccl}
f_{\lambda, a}(X_b, X_{b_1}, X_{b_2}, X_a) &=& \|X_{b}^*-X_{b^*}\|+
\|\lambda X_{b_1} + X_{b_2} - X_{\lambda b_1+b_2}\| \\
&& +\|X_{b_1}\cdot X_{b_2}-X_{b_1b_2}\|+\|a-X_{a}\|
\end{array}
\]
for $\lambda\in\IQ[i]$,~$b, b_1, b_2\in B'$ and $a\in A'$, define a countable set of quantifier-free formulas with parameters in $\IQ[i]$ and $A'\subset A$. Evaluating $X_b=b$ for all $b\in B'$ yields that in $B$, the norms of the above formulas evaluate at zero simultaneously.  Note that the minimum of finitely many quantifier-free formulas is a quantifier-free formula. As $\iota$ is an existential embedding, we have for every $H\fin\IQ[i]$, $r>0$ and $F\fin B'_{\leq r}$ that
\[
\inf_{ \set{x_b}_{b\in F}\in A_{\leq r}}~ \inf_{\lambda\in H, b, b_1, b_2\in F, a\in F\cap A}~ f_{\lambda, a}(x_b, x_{b_1}, x_{b_2}, x_a) = 0.
\]
Now pick increasing finite sets $F_n\fin B'$ with $B'=\bigcup_{n\in\IN} F_n$. By applying the above condition, it follows that there exist sequences $x_b^{(n)}$ in $A$ (for $b\in B'$) satisfying
\begin{itemize}
\item $\limsup_{n\to\infty} \|x^{(n)}_b\|\leq \|b\|$
\item $\|\lambda x^{(n)}_{b_1} + x^{(n)}_{b_2} - x^{(n)}_{\lambda b_1+b_2}\|\to 0$
\item $\|x^{(n)}_{b_1}\cdot x^{(n)}_{b_2}-x^{(n)}_{b_1b_2}\|\to 0$
\item $\|x^{(n)*}_{b}-x^{(n)}_{b^*}\|\to 0$
\item $\|a-x^{(n)}_{a}\|\to 0$
\end{itemize}
for all $\lambda\in\IQ[i]$,~$b, b_1, b_2\in B'$ and $a\in A'$. These relations imply that the map $\psi: B'\to A_\infty$ given by $\psi(b)=[(x^{(n)}_b)_n]$ is a well-defined, contractive $*$-homomorphism with $\psi(\iota(a))=\psi(a)=a$ for all $a\in A'$. Since $A'\subset A$ and $B'\subset B$ are dense, it follows that there is a unique continuous extension $\psi: B\to A_\infty$, which is a  $*$-homomorphism with $\psi(\iota(a))=a$ for all $a\in A$. Thus $\psi$ is an approximate left-inverse for $\iota$.
\end{proof}

\begin{rem}
Let $\omega\in\beta\IN\setminus\IN$ be a free ultrafilter. Using some more continuous logic from \cite{FarahHartSherman2014}, one can improve \ref{ec-seq-split} and show that in fact, a $*$-homomorphism $\iota: A\to B$ between separable \cstar-algebras is an existential embedding if and only if $\iota$ has a faithful, approximate left-inverse into $A_\omega$. By separability, this is equivalent to having it into $A_\infty$, by virtue of a reindexation argument. This was pointed out to the authors by Ilijas Farah in personal communication. As this would require recalling more machinery from \cite{FarahHartSherman2014, GoldbringSinclair2015}, we omit the proof for the sake of brevity. Since the initial preprint version of this paper was available, more connections to model theory were discovered in \cite{GoldbringSinclair16, GardellaLupini16}. In particular, it turns out that a $*$-homomorphism between separable \cstar-algebras is sequentially split if and only if it is positively existential.
\end{rem}

%%%%%%%%%%%%%

\subsection{Approximately representable actions of discrete groups}

\begin{defi}[cf.\ {\cite[2.2]{IzumiMatui2010}} and {\cite[3.6]{Izumi2004}}] \label{defi approx repr}
Let $A$ be a separable \cstar-algebra and $H$ a discrete group. An action $\alpha:H \curvearrowright A$ is called approximately representable, if there exist contractions $x_{n,h} \in A$, $n\in \IN$ and $h\in H$, satisfying the following properties:
\begin{enumerate}[label=(\arabic*)]
\item For $h\in H$, $(x_{n,h}x^*_{n,h})_n$ and $(x^*_{n,h}x_{n,h})_n$ are approximate units for $A$.
\item For all $g,h\in H$ and $a\in A$,
\[
\lim\limits_{n\to \infty} \| a(x_{n,g}x_{n,h} - x_{n,gh}) \| + \| (x_{n,g}x_{n,h} - x_{n,gh})a \| = 0.
\]
\item For all $a\in A$ and $h\in H$, $\lim\limits_{n\to \infty}\| \alpha_h(a) - x_{n,h}ax_{n,h}^* \|=0$,
\item For all $g,h\in H$ and $a\in A$,
\[
\lim\limits_{n\to \infty} \| a(x_{n,ghg^{-1}} - \alpha_g(x_{n,h})) \| + \| (x_{n,ghg^{-1}} - \alpha_g(x_{n,h}))a \|=0.
\]
\end{enumerate} 
\end{defi}

\begin{rem}
In the unital case, one indeed recovers Izumi-Matui's defintion in \cite[2.2]{IzumiMatui2010}. To see this, observe that for $h \in H$, the approximate units $(x_{n,h}x_{n,h}^*)_n$ and $(x_{n,h}^*x_{n,h})_n$ converge to the unit of $A$. Hence, by a perturbation argument, we can choose the $x_{n,h}$ to be unitaries satisfying (2) and (4) for $a = \eins$, and (3) for all $a\in A$. Moreover, if $H$ is abelian, (4) implies that the unitaries $x_{n,h}$ are approximately fixed by $\alpha$. Using \ref{compact fixed point sequence algebra}, one can therefore recover Izumi's original definition \cite[3.6]{Izumi2004} of approximately representable finite, abelian group actions.
\end{rem}

\begin{notae}
Let $G$ be a locally compact group. The canonical unitary representation $G\to\CU(\CM(\mathrm{C}^*(G)))$ will be denoted by $g\mapsto\lambda^G_g$. If $\alpha: G\curvearrowright A$ is a continuous action on a \cstar-algebra, we denote the canonical unitary representation $G\to\CU(\CM(A\rtimes_\alpha G))$ by $g\mapsto\lambda^\alpha_g$.
\end{notae}

Like the Rokhlin property, approximate representability can be characterized in terms of sequentially split $*$-homomorphisms:

\begin{prop} \label{char appr rep}
Let $A$ be a separable \cstar-algebra and $H$ a countable, discrete group. Let $\alpha: H \curvearrowright A$ be an action.
The following are equivalent:
\begin{enumerate}[label=\textup{(\roman*)},leftmargin=*]
\item $\alpha$ is approximately representable. \label{car:1}
\item There is a unitary representation $w: H \to \CU(\CM(D_{\infty,A}))$ satisfying
\[
\alpha_h(a) = w_h a w_h^*\quad \text{and}\quad \tilde{\alpha}_{\infty,h}(w_g) = w_{hgh^{-1}}
\]
for all $a \in A$ and $g,h \in H$. \label{car:2}
\item The canonical inclusion $\iota_A: (A,\alpha)\into (A\rtimes_\alpha H,\ad(\lambda^\alpha))$ is equivariantly sequentially split. \label{car:3}
\end{enumerate}
\end{prop}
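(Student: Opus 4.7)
My plan is to prove the cycle (i) $\Rightarrow$ (ii) $\Rightarrow$ (iii) $\Rightarrow$ (ii) $\Rightarrow$ (i). The equivalence (i) $\Leftrightarrow$ (ii) is essentially a direct translation of Definition \ref{defi approx repr} via the identification
\[
\CN(D_{\infty,A},A_\infty)/\ann(A,A_\infty) \cong \CM(D_{\infty,A})
\]
furnished by Proposition \ref{surjection normalizer multiplier}, while (ii) $\Leftrightarrow$ (iii) follows from the universal property of the crossed product combined with strict-continuous extension to multiplier algebras.

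For (i) $\Rightarrow$ (ii), I would consider the element $[(x_{n,h})_n] \in A_\infty$ for each $h \in H$. Condition (1) of Definition \ref{defi approx repr} guarantees that $[(x_{n,h})_n]$ lies in $\CN(D_{\infty,A},A_\infty)$ and that its image $w_h \in \CM(D_{\infty,A})$ is unitary, since the two approximate-unit conditions translate to $w_hw_h^*$ and $w_h^*w_h$ acting as the identity on $A$, hence as the identity of $\CM(D_{\infty,A})$. Conditions (2) and (4) say that $w_gw_h - w_{gh}$ and $\tilde{\alpha}_{\infty,g}(w_h) - w_{ghg^{-1}}$ both lie in $\ann(A,A_\infty)$, and so vanish in $\CM(D_{\infty,A})$, yielding the representation and equivariance properties. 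Condition (3) translates verbatim into $w_h a w_h^* = \alpha_h(a)$ for all $a \in A$. The converse direction (ii) $\Rightarrow$ (i) reverses this procedure: invoke Proposition \ref{surjection normalizer multiplier}\ref{snm:1} to lift each $w_h$ to a representative in $\CN(D_{\infty,A},A_\infty)$, normalize to contractions in $A$, and then pass to a diagonal sequence over an exhaustion of $A$ and $H$ to obtain $(x_{n,h})_n$ simultaneously satisfying (1)--(4).

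For (ii) $\Rightarrow$ (iii), the covariance relation $w_h a w_h^* = \alpha_h(a)$ together with the representation property turns $(A \hookrightarrow D_{\infty,A}, w)$ into a covariant pair, so the universal property of the full crossed product yields a $*$-homomorphism $\psi: A\rtimes_\alpha H \to \CM(D_{\infty,A})$ sending $a \mapsto a$ and $\lambda^\alpha_h \mapsto w_h$. Because $A$ acts non-degenerately on $D_{\infty,A}$, the image of $\psi$ lies in $D_{\infty,A} \subset A_\infty$, and since $H$ is discrete the image is automatically $\alpha_\infty$-continuous, i.e., contained in $A_{\infty,\alpha}$. Equivariance of $\psi$ with respect to $\ad(\lambda^\alpha)$ and $\alpha_\infty$ is verified on the generators $a \in A$ and $\lambda^\alpha_h$, using the two relations in (ii). Conversely, for (iii) $\Rightarrow$ (ii), an equivariant approximate left-inverse $\psi : (A\rtimes_\alpha H, \ad(\lambda^\alpha)) \to (A_{\infty,\alpha}, \alpha_\infty)$ is non-degenerate when viewed as a map into $D_{\infty,A}$ (because $\psi|_A$ is the canonical inclusion), hence extends uniquely to a strictly continuous $*$-homomorphism $\CM(A\rtimes_\alpha H) \to \CM(D_{\infty,A})$; the unitaries $w_h := \psi(\lambda^\alpha_h)$ then form a representation inheriting covariance and equivariance from the corresponding properties of $\lambda^\alpha$ inside $\CM(A\rtimes_\alpha H)$.

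The main technical hurdle is the diagonalization step in (ii) $\Rightarrow$ (i): after lifting each $w_h$ to a bounded sequence of contractions, one needs to simultaneously enforce the countably many approximation requirements (1)--(4) across finite subsets of $H$ and of $A$. This is handled by the usual Kirchberg-style reindexation procedure, essentially the same recipe already employed in the proof of Lemma \ref{weak-seq-split}.
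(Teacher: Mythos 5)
Your route is essentially the paper's: (i)$\Leftrightarrow$(ii) via the identification of $\CM(D_{\infty,A})$ with $\CN(D_{\infty,A},A_\infty)/\ann(A,A_\infty)$ from Proposition \ref{surjection normalizer multiplier}, and (ii)$\Leftrightarrow$(iii) via covariant pairs and strictly continuous extensions to multiplier algebras; those parts are fine. There is, however, one justification that does not stand as written: you claim that condition (1) of Definition \ref{defi approx repr} alone guarantees $x_h:=[(x_{n,h})_n]\in\CN(D_{\infty,A},A_\infty)$. This is false in general. For instance, take $A=\CK$ and contractive partial isometries $v_n$ with $v_nv_n^*=v_n^*v_n=p_n$, where the $p_n$ are rank-$n$ projections increasing to $\eins$ and $v_n$ sends the first basis vector to the $n$-th; then $(v_n^*v_n)_n=(v_nv_n^*)_n=(p_n)_n$ is an approximate unit, yet $x e_{11}=[(e_{n1})_n]$ is not in $D_{\infty,\CK}$ (no approximate unit of $\CK$ acts on it from the left), so $x\notin\CN(D_{\infty,\CK},\CK_\infty)$. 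The normalizer property genuinely requires the covariance condition (3) in addition to (1): with $e\in A$ strictly positive one computes $x_h e^2 = x_h e\, x_h^* x_h\, e = \alpha_h(e)\,x_h e\in \overline{A A_\infty A}$ and $e^2 x_h = e\, x_h\, \alpha_{h^{-1}}(e)\in\overline{A A_\infty A}$, and then uses that $e^2$ is strictly positive in $D_{\infty,A}$; this is exactly the computation in the paper. Since you invoke (3) in the same breath for the covariance of $w$, the repair is immediate, but as stated the step would fail.

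A smaller remark on (ii)$\Rightarrow$(i): the diagonalization you describe as the main technical hurdle is unnecessary. Conditions (1)--(4) are limit conditions for each fixed $a\in A$ and fixed $g,h\in H$ separately, so once you choose contractive lifts $[(x_{n,h})_n]\in\CN(D_{\infty,A},A_\infty)$ of the $w_h$ (contractive lifts exist since $\CM(D_{\infty,A})$ is a quotient of $\CN(D_{\infty,A},A_\infty)$ by $\ann(A,A_\infty)$), the defect elements such as $x_gx_h-x_{gh}$ and $\alpha_{\infty,g}(x_h)-x_{ghg^{-1}}$ already lie in $\ann(A,A_\infty)$, and $x_hax_h^*=\alpha_h(a)$, $x_h^*x_ha=x_hx_h^*a=a$ hold exactly in $A_\infty$; these statements are literally conditions (1)--(4), so no reindexation over exhaustions of $A$ and $H$ is needed.
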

\begin{proof}
\ref{car:1}$\implies$\ref{car:2}:
Assume that $\alpha$ is approximately representable and take contractions $x_{n,h} \in A$ as in \ref{defi approx repr}. Set $x_h := [(x_{n,h})_n] \in A_\infty$. We first show that $x_h \in \CN(D_{\infty,A}, A_\infty)$. Let $e \in A$ be a strictly positive element. Using (1) and (3) of \ref{defi approx repr}, we obtain
\[
x_h e^2 = x_h e x_{n,h}^*x_{n,h} e = \alpha_h(e)x_{n,h}e \in D_{\infty,A}.
\]
A similar computation shows that $e^2 x_h = e x_h \alpha_{h^{-1}}(e) \in D_{\infty,A}$. Since $e^2$ is also strictly positive as an element in $D_{\infty,A}$, we conclude that $x_hD_{\infty,A}+D_{\infty,A}x_h\subset D_{\infty,A}$. Moreover, we have $x_h^*x_he=ex_h^*x_h=e$ and $x_hx_h^*e=ex_hx_h^*=e$ by (1), so the elements $x_h^*x_h$ and $x_hx_h^*$ act like a unit on $D_{\infty,A}$. Condition (2) means $x_gx_h-x_{gh}\in\ann(A,A_\infty)$ and condition (4) implies $\alpha_{\infty,h}(x_g)-x_{hgh^{-1}}\in\ann(A,A_\infty)$ for all $g,h\in H$. 

Now recall from \ref{surjection normalizer multiplier}\ref{snm:1} the natural (and therefore equivariant) surjection
\[
\pi: \CN(D_{\infty,A}, A_\infty) \to \CM(D_{\infty,A}).
\] 
Then by the properties of the $(x_h)_{h\in H}$, the map $w:H \to \CU(\CM(D_\infty(A)))$ given by $w_h := \pi(x_h)$ defines a unitary representation and has the desired properties.

\ref{car:2}$\implies$\ref{car:1}:
For $h\in H$, let $[(x_{n,h})_n] \in \CN(D_{\infty,A}, A_\infty)$ be a contraction that gets mapped to $w_h$ under the canonical surjective (cf.~\ref{surjection normalizer multiplier}\ref{snm:1}) $*$-homomorphism $\CN(D_{\infty,A}, A_\infty) \to \CM(D_{\infty,A})$. We may assume that each $x_{n,h}$ is a contraction. Using that this $*$-homomorphism is equivariant with respect to the induced $H$-actions, one checks that the $x_{n,h}$, $n\in \IN$ and $h\in H$, must satisfy the conditions (1)-(4) from \ref{defi approx repr}.

\ref{car:2}$\implies$\ref{car:3}:
The canonical embedding $A\into D_{\infty,A}$ and $w$ define a covariant pair for $(A,\alpha)$, which in turn gives rise to a $*$-homomorphism
\[
\psi:A\rtimes_\alpha H\to D_{\infty,A}\subset A_\infty.
\]
By design, $\psi\circ \iota_A(a) = a$ for all $a \in A$. Since $\tilde{\alpha}_{\infty,h}(w_g)=w_{hgh^{-1}}$ for all $g,h\in H$, one concludes that $\psi$ is $\ad(\lambda^\alpha)$-to-$\alpha_\infty$-equivariant. This shows that $\iota_A:(A,\alpha) \into (A \rtimes_\alpha H,\ad(\lambda^\alpha))$ is sequentially split.

\ref{car:3}$\implies$\ref{car:2}:
Let $\psi:(A \rtimes_\alpha H,\ad(\lambda^\alpha)) \to (A_\infty,\alpha_\infty)$ be an equivariant approximate left-inverse for $\iota_A$. Since $\iota_A$ is non-degenerate, the image of $\psi$ is a non-degenerate \cstar-subalgebra of $D_{\infty,A}$. Hence, $\psi$ extends to a unital and equivariant $*$-homomorphism
\[
\psi:(\CM(A\rtimes_\alpha H),\ad(\lambda^\alpha))\to (\CM(D_{\infty,A}),\tilde{\alpha}_\infty).
\]
The unitaries $w_h = \psi(\lambda^\alpha_h)\in \CM(D_{\infty,A})$ define a unitary representation of $H$ and satisfy
\[
\alpha_h(a) = w_h a w_h^*\quad \text{and}\quad \tilde{\alpha}_{\infty,h}(w_g) = w_{hgh^{-1}}
\]
for all $a \in A$ and $g,h \in H$. 
\end{proof}

We continue with a duality result for actions of second-countable, compact, abelian groups with the Rokhlin property and approximately representable actions of countable, discrete, abelian groups on separable \cstar-algebras. This generalizes the well-known duality result by Izumi \cite[3.8]{Izumi2004} in the case of finite abelian group actions on separable, unital \cstar-algebras. Note that Gardella \cite{Gardella2014} has observed a similar phenomonon for circle actions on unital \cstar-algebras; see also \cite{GardellaPHD} for a further generalization in the unital case. The essential ingredients of the proof will be the characterizations \ref{seq split Rp} and \ref{char appr rep} of the Rokhlin property and approximate representability in terms of sequentially split $*$-homomorphisms. The duality result will turn out to be an application of the general duality principle \ref{seq split duality}. 
Before turning to the proof, some further preparation is needed. The following result is well-known.

\begin{prop}[cf.~{\cite[3.1]{Williams2007}}] \label{isomorphism group dual group}
Let $G$ be a locally compact, abelian group. The Gelfand transform yields an equivariant isomorphism
\[
(\mathrm{C}^*(\hat{G}),\kappa)\stackrel{\cong}{\longrightarrow}(C_0(G),\sigma),
\]
where $\kappa_g(\lambda^{\hat{G}}_\chi)=\chi(g^{-1})\lambda^{\hat{G}}_\chi$ and $\sigma_g(f)=f(g^{-1}\cdot\_)$ for all $g\in G$ and $\chi \in \hat{G}$.
\end{prop}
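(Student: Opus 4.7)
The plan is to apply the classical Plancherel/Gelfand isomorphism for abelian group $\mathrm{C}^*$-algebras to the group $H = \hat{G}$ and then verify the equivariance claim by direct computation on the canonical unitary generators.

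First, I will recall the standard fact that for any locally compact abelian group $H$, the Fourier transform $f \mapsto \hat{f}$ on $L^1(H)$ extends to a $*$-isomorphism $\mathrm{C}^*(H) \stackrel{\cong}{\longrightarrow} C_0(\hat{H})$. Applying this to $H = \hat{G}$, and combining with the Pontryagin duality identification $\hat{\hat{G}} \cong G$ via $g \mapsto \bigl(\chi \mapsto \chi(g)\bigr)$, I obtain a $*$-isomorphism $\Phi: \mathrm{C}^*(\hat{G}) \to C_0(G)$. This part is purely classical and can be cited from, e.g., standard references on abstract harmonic analysis or \cite{Williams2007}.

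Next, I identify what $\Phi$ does on the canonical unitaries $\lambda^{\hat{G}}_\chi \in \CU(\CM(\mathrm{C}^*(\hat{G})))$ for $\chi \in \hat{G}$. The $*$-isomorphism extends strictly continuously to multiplier algebras, giving $\Phi: \CM(\mathrm{C}^*(\hat{G})) \to C_b(G)$. Unwinding the definitions of the Fourier transform and of Pontryagin duality yields the formula
\[
\Phi(\lambda^{\hat{G}}_\chi)(g) = \chi(g) \quad \text{for all } g \in G, \chi \in \hat{G}.
\]
Call this function $e_\chi$. Verifying this identity is the only real content of the proof: one checks it on $f \in L^1(\hat{G})$ by noting that $\lambda^{\hat{G}}_\chi \cdot f = \chi \cdot f$ (pointwise on $\hat{G}$, after the usual translation) and then computing the Fourier transform.

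Finally, I verify the equivariance. On the multiplier picture we have, for $g \in G$ and $\chi \in \hat{G}$,
\[
\sigma_g(e_\chi)(h) = e_\chi(g^{-1}h) = \chi(g^{-1})\chi(h) = \chi(g^{-1})\,e_\chi(h),
\]
so $\sigma_g(\Phi(\lambda^{\hat{G}}_\chi)) = \chi(g^{-1})\Phi(\lambda^{\hat{G}}_\chi) = \Phi(\kappa_g(\lambda^{\hat{G}}_\chi))$. Since $\mathrm{C}^*(\hat{G})$ is densely generated by the linear span of the integrated products $\int_{\hat{G}} f(\chi)\lambda^{\hat{G}}_\chi\,d\chi$ for $f \in L^1(\hat{G})$, and both $\kappa_g$ and $\sigma_g$ are $*$-automorphisms (in particular bounded and linear), the equality $\sigma_g \circ \Phi = \Phi \circ \kappa_g$ on the unitary generators extends by continuity and linearity to all of $\mathrm{C}^*(\hat{G})$. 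The only technical obstacle is keeping track of the conventions (left vs.\ right regular representation, sign in the Fourier kernel) so that the factor $\chi(g^{-1})$ comes out with the correct exponent; once this bookkeeping is done, the argument is essentially a one-line computation on generators.
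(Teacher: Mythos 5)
Your argument is correct, and in fact the paper offers no proof of this proposition at all: it is labelled ``well-known'' and disposed of by the citation to Williams, so your write-up simply supplies the standard argument that the citation stands in for. The route you take -- Fourier/Gelfand isomorphism $\mathrm{C}^*(\hat G)\cong C_0(\widehat{\hat G})$ plus Pontryagin duality, then checking $\sigma_g\circ\Phi=\Phi\circ\kappa_g$ on the multipliers $\lambda^{\hat G}_\chi$ and extending over the dense copy of $L^1(\hat G)$ -- is exactly the expected one. The only delicate point is the one you flag yourself: the identity $\Phi(\lambda^{\hat G}_\chi)(g)=\chi(g)$ (rather than $\overline{\chi(g)}$) depends on which convention you adopt for the characters of $\mathrm{C}^*(\hat G)$ and for the identification $G\cong\widehat{\hat G}$; with the choice $\omega_g(f)=\int_{\hat G}f(\chi)\chi(g)\,d\chi$ everything is consistent and the factor $\chi(g^{-1})$ comes out as stated, whereas the opposite Fourier convention would produce $\chi(g)$ and force a compensating inversion on $G$. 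Since you state the convention you use and the computation under it is right, there is no gap.
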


The following is taken from the proof of the Takai duality theorem \cite{Takai1975} presented in \cite[7.1]{Williams2007}, which is a variant of Raeburn's proof \cite{Raeburn1988}.

\begin{prop} \label{isomorphism proof Takai}
Let $G$ be a compact, abelian group, $A$ a \cstar-algebra and $\alpha:G\curvearrowright A$ a continuous action. There exists an equivariant isomorphism
\[
\psi:(A\rtimes_\alpha G\rtimes_{\hat{\alpha}}\hat{G},\ad(\lambda^{\hat{\alpha}})) \stackrel{\cong}{\longrightarrow} ((\CC(G)\otimes A)\rtimes_{\sigma\otimes \alpha} G, \widehat{\sigma\otimes\alpha})
\]
making the following diagram commute
\[
\xymatrix{
 (A\rtimes_\alpha G,\hat{\alpha}) \ar[r]^/-0.5cm/{\iota_{A\rtimes_\alpha G}} \ar[d]_{(\eins \otimes \id_A)\rtimes G} & (A\rtimes_\alpha G\rtimes_{\hat{\alpha}} \hat{G},\ad(\lambda^{\hat{\alpha}})) \ar[dl]^\psi \\
  ((\CC(G)\otimes A)\rtimes_{\sigma\otimes \alpha}G,\widehat{\sigma\otimes\alpha})
}
\]
\end{prop}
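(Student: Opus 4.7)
The plan is to realize $\psi$ concretely by producing a covariant pair for $(A\rtimes_\alpha G,\hat\alpha,\hat G)$ landing in the multiplier algebra $\CM\bigl((\CC(G)\otimes A)\rtimes_{\sigma\otimes\alpha}G\bigr)$. Let $j:\CC(G)\otimes A\to\CM(\cdots)$ denote the canonical non-degenerate inclusion and $v:G\to\CU(\CM(\cdots))$ the canonical unitary representation associated to the crossed product. Since $(\sigma\otimes\alpha)_g$ restricts to $\alpha_g$ on the subalgebra $\eins\otimes A$, the pair $\bigl(j(\eins\otimes\,\cdot\,)\,,\,v\bigr)$ is covariant for $(A,\alpha,G)$, and its integrated form equals $(\eins\otimes\id_A)\rtimes G$. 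Next, set $u_\chi:=j(\chi\otimes\eins_{\CM(A)})$ for $\chi\in\hat G$. A direct computation using $\sigma_g(\bar\chi)=\chi(g)\bar\chi$ yields $u_\chi v_g u_\chi^*=\chi(g) v_g$, while $u_\chi$ commutes with $j(\eins\otimes A)$ because $\chi\otimes\eins$ lives in the first tensor factor. Hence $\bigl((\eins\otimes\id_A)\rtimes G\,,\,u\bigr)$ is a covariant pair for $(A\rtimes_\alpha G,\hat\alpha,\hat G)$, and the universal property of the full crossed product produces a $*$-homomorphism $\psi:(A\rtimes_\alpha G)\rtimes_{\hat\alpha}\hat G\to\CM(\cdots)$ with $\psi(a)=\eins\otimes a$, $\psi(\lambda^\alpha_g)=v_g$ and $\psi(\lambda^{\hat\alpha}_\chi)=u_\chi$ on generators.

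The image of $\psi$ contains all products $(\chi\otimes a)v_g$, and since characters of the compact abelian group $G$ separate points and form a conjugation-closed unital subalgebra of $\CC(G)$, the Stone--Weierstrass theorem implies that they span a dense subspace of $\CC(G)$. Therefore $\psi$ has dense range in the honest algebra $(\CC(G)\otimes A)\rtimes_{\sigma\otimes\alpha}G$; in particular its image lands in the algebra and $\psi$ is surjective. Injectivity is the subtle point and is the main obstacle I anticipate: the cleanest resolution is to invoke the ordinary Takai duality theorem, which identifies both source and target with $A\otimes\CK(L^2(G))$---the target via the conjugacy $\bar\alpha$ between $\sigma\otimes\alpha$ and $\sigma\otimes\id$ supplied by \ref{shift absorption}, combined with the classical isomorphism $\CC(G)\rtimes_\sigma G\cong\CK(L^2(G))$. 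Alternatively, since $\hat G$ is amenable the full and reduced crossed products agree, and the concrete covariant representation produced above is easily verified to be spatially faithful on a suitable Hilbert space, yielding injectivity directly.

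It remains to check commutativity of the triangle and equivariance. Commutativity is immediate from the construction: on $a\in A$ and on the generating unitaries $\lambda^\alpha_g$, the composite $\psi\circ\iota_{A\rtimes_\alpha G}$ sends $a$ to $\eins\otimes a$ and $\lambda^\alpha_g$ to $v_g$, which agrees with $(\eins\otimes\id_A)\rtimes G$ on the same generators. For equivariance one compares $\ad(\lambda^{\hat\alpha}_{\chi_0})$---which fixes $a$ and $\lambda^{\hat\alpha}_\chi$ and rescales $\lambda^\alpha_g$ by $\chi_0(g)$---with $\widehat{\sigma\otimes\alpha}_{\chi_0}$, which fixes the canonical copy of $\CC(G)\otimes A$ and rescales $v_g$ by $\chi_0(g)$. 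Evaluating $\psi$ on each of the three types of generators of the domain and applying both actions in either order yields the same answer by the defining relations already established, so $\psi$ is $\hat G$-equivariant, completing the proof.
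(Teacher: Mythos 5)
Your construction of $\psi$ is essentially the paper's construction transported through the Fourier transform: the paper builds the same covariant pair inside $\CM\bigl((\mathrm{C}^*(\hat G)\otimes A)\rtimes_{\kappa\otimes\alpha}G\bigr)$, using $(\eins\otimes\iota_A)\rtimes G$ and the unitaries $\lambda^{\hat G}_\chi\otimes\eins$, and only at the end replaces $(\mathrm{C}^*(\hat G),\kappa)$ by $(\CC(G),\sigma)$ via \ref{isomorphism group dual group}; your covariance relation $u_\chi v_g u_\chi^*=\chi(g)v_g$, the generator-by-generator check of the commuting triangle, and the $\hat G$-equivariance argument are all fine and match the paper's conventions.

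The genuine gap is injectivity, exactly where you anticipated trouble. Your first proposed fix does not work: knowing (from Takai duality or \ref{shift absorption}) that source and target are both abstractly isomorphic to $A\otimes\CK(L^2(G))$ says nothing about the kernel of the particular surjection $\psi$ you built, since a \cstar-algebra can be isomorphic to a proper quotient of itself (already $\CC([0,1])$ is, and $A\otimes\CK$ can be for suitable $A$); ``surjection between isomorphic algebras'' does not imply ``injective''. Your second fix is an assertion, not an argument: exhibiting a faithful representation of the target under which the pulled-back covariant pair for $(A\rtimes_\alpha G,\hat\alpha,\hat G)$ is (weakly equivalent to) a regular representation is precisely the nontrivial content of the classical Takai proof, and nothing is verified. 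The paper avoids both issues by writing down an explicit inverse: the non-degenerate map $\phi:\mathrm{C}^*(\hat G)\otimes A\cong A\rtimes_{\id}\hat G\to A\rtimes_\alpha G\rtimes_{\hat\alpha}\hat G$ together with $g\mapsto\lambda^\alpha_g$ is a covariant pair for $\kappa\otimes\alpha$, via the computation $\lambda^\alpha_g(a\lambda^{\hat\alpha}_\chi)\lambda^\alpha_{g^{-1}}=\alpha_g(a)\chi(g^{-1})\lambda^{\hat\alpha}_\chi$, and its integrated form is checked on generators to invert $\psi$. In your picture this amounts to sending $\chi\otimes a\mapsto a\lambda^{\hat\alpha}_\chi$ and $g\mapsto\lambda^\alpha_g$, verifying covariance against $(\sigma\otimes\alpha)_g(\chi\otimes a)=\chi(g^{-1})\,\chi\otimes\alpha_g(a)$, and checking on the generators $\eins\otimes a$, $u_\chi$, $v_g$ that the two integrated forms compose to the identity; adding this (or any equally explicit inverse) closes the gap.
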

\begin{proof}
The $*$-homomorphism
\[
(\eins\otimes \iota_A)\rtimes G:A\rtimes_\alpha G\to (C^*(\hat{G})\otimes A )\rtimes_{\kappa\otimes \alpha} G
\]
and the unitary representation
\[
\hat{G}\to \CU(\CM((C^*(\hat{G})\otimes A)\rtimes_{\kappa\otimes \alpha}G)),\ \chi\mapsto  \lambda_\chi^{\hat{G}}\otimes \eins
\]
define a covariant pair for $(A\rtimes_\alpha G,\hat{\alpha})$. To see this, observe that $\mathrm{C}^*(\hat{G}) \otimes A$ is in the fixed point algebra of $\ad(\lambda^{\hat{G}}\otimes \eins)$ and that for $\chi\in \hat{G}$ and $g\in G$,
\[
\begin{array}{ccl}
(\lambda_\chi^{\hat{G}}\otimes\eins) \lambda_g^{\kappa\otimes \alpha} (\lambda_{\chi^{-1}}^{\hat{G}}\otimes \eins) & = & (\lambda_\chi^{\hat{G}} \kappa_g(\lambda_{\chi^{-1}}^{\hat{G}})\otimes \eins)\lambda_g^{\kappa\otimes \alpha}\\
 & = & \chi(g^{-1})^{-1}\lambda_g^{\kappa\otimes \alpha}\\
 & = & \chi(g)\lambda_g^{\kappa\otimes \alpha}.
\end{array}
\]
This also shows that $\widehat{\kappa \otimes \alpha} = \ad(\lambda^{\hat{G}}\otimes \eins)$. Hence, the corresponding integrated form $\psi': A\rtimes_\alpha G \rtimes_{\hat{\alpha}}\hat{G} \to (\mathrm{C}^*(\hat{G})\otimes A)\rtimes_{\kappa\otimes \alpha}G$ is $\ad(\lambda^{\hat{\alpha}})$-to-$\widehat{\kappa\otimes\alpha}$-equivariant. Therefore, the following diagram of equivariant $*$-homomorphisms commutes:
\[
\xymatrix{
(A\rtimes_\alpha G,\hat{\alpha}) \ar[r]^/-0.5cm/{\iota_{A\rtimes_\alpha G}} \ar[d]_{(\eins\otimes\id_A)\rtimes G} & (A\rtimes_\alpha G\rtimes_{\hat{\alpha}} \hat{G},\ad(\lambda^{\hat{\alpha}})) \ar[dl]^{\psi'} \\
 ((\mathrm{C}^*(\hat{G})\otimes A)\rtimes_{\kappa\otimes \alpha}G,\widehat{\kappa\otimes \alpha})
}
\]
To see that $\psi'$ is an isomorphism, note that the non-degenerate $*$-homomorphism
\[
\phi:\mathrm{C}^*(\hat{G})\otimes A \cong A\rtimes_{\id} \hat{G} \to A\rtimes_\alpha G\rtimes_{\hat{\alpha}}\hat{G},
\]
and the representation
\[
G\to \CU(\CM(A\rtimes_\alpha G\rtimes_{\hat{\alpha}}\hat{G})),\ g\mapsto \lambda_g^\alpha,
\]
define a covariant pair for $\kappa \otimes \alpha$. The covariance condition is indeed satisfied as
\[
\lambda_g^\alpha (a\lambda_\chi^{\hat{\alpha}})\lambda_{g^{-1}}^\alpha  =  \lambda_g^\alpha a\lambda^\alpha_{g^{-1}} \chi(g^{-1})\lambda_\chi^{\hat{\alpha}} = \alpha_g(a)(\chi(g^{-1})\lambda_\chi^{\hat{\alpha}}) = \phi((\kappa\otimes\alpha)_g(\lambda_\chi^{\hat{G}}\otimes a)).
\]
One checks that the resulting integrated form is inverse to $\psi'$. The claim now follows from \ref{isomorphism group dual group} because $(\mathrm{C}^*(\hat{G}),\kappa)$ may be replaced by $(\CC(G),\sigma)$.
\end{proof}

\begin{prop} \label{auxiliary iso}
Let $H$ be a discrete, abelian group, $A$ a \cstar-algebra and $\beta: H \curvearrowright A$ an  action. There exists an equivariant $*$-isomorphism
\[
\psi:(A \rtimes_\beta H \rtimes_{\ad(\lambda^\beta)} H, \widehat{\ad(\lambda^\beta)}) \stackrel{\cong}{\longrightarrow} (\CC(\hat{H}) \otimes (A \rtimes_\beta H), \sigma \otimes \hat{\beta})
\]
making the following diagram commute
\[
\xymatrix{
 (A\rtimes_\beta H,\hat{\beta}) \ar[r]^/-0.5cm/{\iota_A \rtimes H} \ar[d]_{\eins \otimes \id_{A \rtimes_\beta H}} & (A \rtimes_\beta H \rtimes_{\ad(\lambda^\beta)} H, \widehat{\ad(\lambda^\beta)}) \ar[dl]^\psi \\
  (\CC(\hat{H}) \otimes (A \rtimes_\beta H), \sigma \otimes \hat{\beta})
}
\]
\end{prop}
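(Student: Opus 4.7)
The plan is to construct $\psi$ in direct analogy with the proof of Proposition \ref{isomorphism proof Takai}, exploiting that the action $\gamma := \ad(\lambda^\beta)$ is inner. Throughout, denote by $e_h\in\CC(\hat H)$ the character evaluation $e_h(\chi)=\chi(h)$; these are the images of $\lambda^H_h$ under the Gelfand identification $\mathrm{C}^*(H)\cong\CC(\hat H)$, and satisfy $\sigma_\chi(e_h)=\chi(h)^{-1}e_h$.

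First I would construct a covariant pair $(\phi,v)$ for $(A\rtimes_\beta H,\gamma)$ with codomain $\CM(\CC(\hat H)\otimes (A\rtimes_\beta H))$. On the coefficient side, let $\phi\colon A\rtimes_\beta H\to \CM(\CC(\hat H)\otimes (A\rtimes_\beta H))$ be the integrated form of the covariant pair for $(A,\beta)$ given by $a\mapsto \eins\otimes a$ and $h\mapsto e_h\otimes\lambda^\beta_h$; on the unitary side, set $v_h:=\eins\otimes\lambda^\beta_h$. Covariance of $(\phi,v)$ for $(A\rtimes_\beta H,\gamma)$ is immediate: conjugation by $v_h$ yields $\beta_h$ on $A$, while by abelianness of $H$ it commutes with $\phi(\lambda^\beta_k)=e_k\otimes\lambda^\beta_k$, matching $\gamma_h(\lambda^\beta_k)=\lambda^\beta_k$. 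The resulting integrated form $\psi\colon(A\rtimes_\beta H)\rtimes_\gamma H\to \CC(\hat H)\otimes(A\rtimes_\beta H)$ satisfies $\psi((\iota_A\rtimes H)(a\lambda^\beta_h))=\psi(a\lambda^\gamma_h)=\phi(a)v_h=\eins\otimes a\lambda^\beta_h$ for $a\in A$, $h\in H$, making the diagram commute. Equivariance with respect to $\widehat{\gamma}$ and $\sigma\otimes\hat\beta$ is then checked on the generators: $(\sigma\otimes\hat\beta)_\chi$ fixes $\eins\otimes a=\psi(a)$; the factors $\chi(h)^{-1}$ and $\chi(h)$ coming from $\sigma_\chi$ and $\hat\beta_\chi$ cancel on $e_h\otimes\lambda^\beta_h=\psi(\lambda^\beta_h)$ (matching that $\widehat\gamma_\chi$ fixes $A\rtimes_\beta H$ pointwise); and $(\sigma\otimes\hat\beta)_\chi$ scales $\eins\otimes\lambda^\beta_h=\psi(\lambda^\gamma_h)$ by $\chi(h)$, matching $\widehat\gamma_\chi(\lambda^\gamma_h)=\chi(h)\lambda^\gamma_h$.

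Finally, to conclude that $\psi$ is an isomorphism, I would exhibit an explicit inverse. Set $w_h:=\lambda^\beta_h\cdot(\lambda^\gamma_h)^{-1}\in\CU(\CM((A\rtimes_\beta H)\rtimes_\gamma H))$. Using that $H$ is abelian, $\lambda^\gamma$ commutes with $\lambda^\beta$ inside $\CM((A\rtimes_\beta H)\rtimes_\gamma H)$; combined with $\ad(\lambda^\beta_h)|_A=\beta_h=\ad(\lambda^\gamma_h)|_A$, this implies that each $w_h$ is central, and $h\mapsto w_h$ is a unitary representation of $H$. By the universality of $\mathrm{C}^*(H)\cong\CC(\hat H)$, this extends to a unital $*$-homomorphism $\eta\colon\CC(\hat H)\to\CZ(\CM((A\rtimes_\beta H)\rtimes_\gamma H))$ with $\eta(e_h)=w_h$. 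Since its image is central, it commutes with $(\iota_A\rtimes H)(A\rtimes_\beta H)$, so by nuclearity of $\CC(\hat H)$ and the universal property of the tensor product one obtains a $*$-homomorphism $\psi^{-1}\colon\CC(\hat H)\otimes(A\rtimes_\beta H)\to(A\rtimes_\beta H)\rtimes_\gamma H$ given by $f\otimes b\mapsto\eta(f)\cdot(\iota_A\rtimes H)(b)$. Checking on the generators $\eins\otimes a$, $\eins\otimes\lambda^\beta_h$ and $e_h\otimes\lambda^\beta_h$ confirms that $\psi$ and $\psi^{-1}$ are mutually inverse. The main technical care lies in keeping the sign conventions for characters straight when passing between $\mathrm{C}^*(H)$ and $\CC(\hat H)$; this is the analogue of the corresponding bookkeeping in the proof of Proposition \ref{isomorphism proof Takai}.
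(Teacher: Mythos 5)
Your argument is correct, and it in fact produces exactly the same isomorphism as the paper: on the dense span both maps send $a\lambda^\beta_g\lambda^{\ad(\lambda^\beta)}_h\mapsto e_g\otimes a\lambda^\beta_g\lambda^\beta_h$, where $e_g(\chi)=\chi(g)$. The difference lies in how the proof is organized. The paper factors $\psi=\psi_2\circ\psi_1$: first the ``flip'' automorphism $\psi_1$ of $A\rtimes_\beta H\rtimes_{\ad(\lambda^\beta)}H$ exchanging the two copies of $H$ (the integrated form of the covariant pair $(\iota_A\rtimes H,\lambda^\beta)$), and then the standard untwisting of the inner action, namely the isomorphism $A\rtimes_\beta H\rtimes_{\ad(\lambda^\beta)}H\cong \mathrm{C}^*(H)\otimes(A\rtimes_\beta H)$ arising from the covariant pair $\big(\eins\otimes\iota_{A\rtimes_\beta H},\, h\mapsto\lambda^H_h\otimes\lambda^\beta_h\big)$, followed by the Pontryagin--Gelfand identification $(\mathrm{C}^*(H),\tilde\kappa)\cong(\CC(\hat{H}),\sigma)$ from \ref{isomorphism group dual group}; bijectivity is inherited from these two standard isomorphisms, and the dual-action bookkeeping is distributed over the two steps. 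You instead realize $\psi$ in one stroke as the integrated form of a single covariant pair landing in $\CC(\hat{H})\otimes(A\rtimes_\beta H)$, and you certify bijectivity by exhibiting the inverse explicitly through the central unitaries $w_h=\lambda^\beta_h\big(\lambda^{\ad(\lambda^\beta)}_h\big)^*$; this amounts to unpacking the paper's two steps into one self-contained verification, which avoids invoking the flip automorphism and the generic untwisting isomorphism at the cost of checking centrality of the $w_h$ and the two composites by hand. Your sign conventions ($\sigma_\chi(e_h)=\chi(h)^{-1}e_h$ and $\hat{\beta}_\chi(\lambda^\beta_h)=\chi(h)\lambda^\beta_h$) agree with the paper's, so the equivariance computation goes through exactly as you state.
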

\begin{proof}
As $H$ is abelian, one checks that
\[
\iota_A \rtimes H : A\rtimes_\beta H \to A \rtimes_\beta H \rtimes_{\ad(\lambda^\beta)} H
\]
and the unitary representation
\[
\lambda^\beta : H \to \CU(\CM(A \rtimes_\beta H \rtimes_{\ad(\lambda^\beta)} H))
\]
define a covariant pair for $(A\rtimes_\beta H,\ad(\lambda^\beta))$. Note that we form the $*$-homomorphism $\iota_A\rtimes H$ by crossing with the second copy of $H$ in $\CM( A \rtimes_\beta H \rtimes_{\ad(\lambda^\beta)} H )$ and $\lambda^\beta$ maps into the first copy.
The corresponding integrated form is an automorphism $\psi_1 \in \Aut(A\rtimes_\beta H\rtimes_{\ad(\lambda^\beta)} H)$ making the following diagram of equivariant $*$-homomorphisms commute:
\[
\xymatrix{
 (A\rtimes_\beta H,\hat{\beta}) \ar[r]^/-0.5cm/{\iota_A\rtimes H} \ar[d]_{\iota_{A\rtimes_\beta H}} &  (A\rtimes_\beta H \rtimes_{\ad(\lambda^\beta)} H, \widehat{\ad(\lambda^\beta)}) \ar[dl]^{\psi_1} \\
 (A\rtimes_\beta H \rtimes_{\ad(\lambda^\beta)} H,\hat{\beta}\rtimes H)
}
\]
In short, $\psi_1$ flips the two copies of $H$ in $\CM( A \rtimes_\beta H \rtimes_{\ad(\lambda^\beta)} H )$.
Here, the expression $\hat{\beta}\rtimes H$ denotes the $\hat{H}$-action given by $(\hat{\beta}\rtimes H)_\chi = \hat{\beta}_\chi\rtimes H$ for every $\chi\in\hat{H}$. This is well-defined because for every $h \in H$ and $\chi\in\hat{H}$, we have $\ad(\lambda^\beta_h)\circ\hat{\beta}_\chi = \hat{\beta}_\chi\circ\ad(\lambda^\beta_h)$.

Since $\ad(\lambda^\beta)$ is unitarily implemeted, the covariant pair given by
\[
\eins\otimes\iota_{A\rtimes_\beta H}: A\rtimes_\beta H \into \mathrm{C}^*(H)\otimes (A\rtimes_\beta H)
\]
and
\[
H \to \CU(\CM(\mathrm{C}^*(H)\otimes (A\rtimes_\beta H))),\ h \mapsto \lambda^H_h\otimes\lambda_h^\beta
\]
gives rise to an isomorphism 
\[
\tilde{\psi}_2: A\rtimes_\beta H \rtimes_{\ad(\lambda^\beta)} H \stackrel{\cong}{\longrightarrow} \mathrm{C}^*(H)\otimes (A\rtimes_\beta H).
\]
Moreover, $\tilde{\psi}_2$ fits into the following diagram of equivariant $*$-homomorphisms:
\[
\xymatrix{
 (A\rtimes_\beta H, \hat{\beta}) \ar[r]^/-0.5cm/{\iota_{A \rtimes_\beta H}} \ar[d]_{\eins\otimes \id_{A \rtimes_\beta H}} & (A\rtimes_\beta H \rtimes_{\ad(\lambda^\beta)} H,\hat{\beta}\rtimes H) \ar[dl]^{\tilde{\psi}_2} \\
(\mathrm{C}^*(H)\otimes(A\rtimes_\beta H),\tilde{\kappa}\otimes \hat{\beta})
}
\]
where $\tilde{\kappa}_\chi(\lambda_h^H) = \chi(h)^{-1} \lambda_h^H$ for $\chi \in \hat{H}$ and $h\in H$. The map $\tilde{\psi}_2$ is equivariant because for $a \in A$, $\chi \in \hat{H}$ and $g,h \in H$, we have
\[
\begin{array}{lcl}
\tilde{\psi}_2 \circ (\hat{\beta}\rtimes H)_\chi(a\lambda_g^\beta \lambda_h^{\ad(\lambda^\beta)}) & = & \tilde{\psi}_2 (a\chi(g)\lambda_g^\beta\lambda_h^{\ad(\lambda^\beta)}) \\
& = & \chi(g)(\eins \otimes a\lambda_g^\beta) (\lambda_h^H \otimes \lambda_h^\beta)\\
& = & (\tilde{\kappa} \otimes \hat{\beta})_\chi \big( (\eins \otimes a\lambda_g^\beta)(\lambda_h^H \otimes \lambda_h^\beta) \big) \\
& = & (\tilde{\kappa} \otimes \hat{\beta})_\chi\circ \tilde{\psi}_2(a\lambda_g^\beta \lambda_h^{\ad(\lambda^\beta)})
\end{array}
\]
By the Pontryagin duality theorem and \ref{isomorphism group dual group}, there are equivariant $*$-isomorphisms $(\mathrm{C}^*(H),\tilde{\kappa}) \cong (\mathrm{C}^*(\hat{\hat{H}}),\kappa) \cong (\CC(\hat{H}),\sigma)$. Hence, $\tilde{\psi}_2$ gives rise to an equivariant $*$-isomorphism $\psi_2:(A\rtimes_\beta G \rtimes_{\ad(\lambda^\beta)} H, \hat{\beta}\rtimes H) \stackrel{\cong}{\longrightarrow} (\CC(\hat{H})\otimes(A\rtimes_\beta H),\sigma \otimes \hat{\beta})$. Now, $\psi = \psi_2 \circ \psi_1$ is the desired isomorphism.
\end{proof}

The following generalizes Izumi's duality result \cite[3.8]{Izumi2004} for finite abelian groups; see also \cite[4.4]{Nawata16}, \cite[3.6, 3.7]{Gardella2014} and \cite{GardellaPHD} for similar results, which are essentially contained in the next theorem.

\begin{theorem} \label{duality classical case}
Let $G$ be a second countable, compact, abelian group, $H$ a countable, discrete, abelian group and $A$ a separable \cstar-algebra. Let $\alpha: G\curvearrowright A$ be a continuous action and $\beta:H\curvearrowright A$ an action. Then 
\begin{enumerate}[label=\textup{(\roman*)},leftmargin=*]
\item $\alpha$ has the Rokhlin property if and only if $\hat{\alpha}: \hat{G}\curvearrowright A\rtimes_\alpha G$ is approximately representable; \label{dcc:1}
\item $\beta$ is approximately representable if and only if $\hat{\beta}:\hat{H}\curvearrowright A\rtimes_\beta H$ has the Rokhlin property. \label{dcc:2}
\end{enumerate}
\end{theorem}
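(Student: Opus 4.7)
The plan is to derive both equivalences by stringing together the characterizations of the Rokhlin property and approximate representability in terms of sequentially split $*$-homomorphisms (\ref{seq split Rp} and \ref{char appr rep}) with the general Takai-type duality principle \ref{seq split duality}, using the explicit intertwining diagrams from \ref{isomorphism proof Takai} and \ref{auxiliary iso} to identify dual morphisms with the right canonical inclusions. Since all ingredients are already in place, the proof is essentially a careful diagram chase in two parts.

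For part \ref{dcc:1}, I would start from the equivalent reformulation given by \ref{seq split Rp}: $\alpha$ has the Rokhlin property if and only if the equivariant second-factor embedding $\eins \otimes \id_A : (A,\alpha) \to (\CC(G)\otimes A, \sigma\otimes\alpha)$ is equivariantly sequentially split. By \ref{seq split duality}, this is equivalent to its dual morphism $(\eins\otimes\id_A)\rtimes G : (A \rtimes_\alpha G, \hat{\alpha}) \to ((\CC(G)\otimes A)\rtimes_{\sigma\otimes\alpha} G, \widehat{\sigma\otimes\alpha})$ being $\hat{G}$-equivariantly sequentially split. Now \ref{isomorphism proof Takai} provides an equivariant isomorphism identifying this dual morphism with the canonical inclusion $\iota_{A\rtimes_\alpha G}: (A\rtimes_\alpha G, \hat\alpha) \to (A\rtimes_\alpha G\rtimes_{\hat\alpha}\hat{G}, \ad(\lambda^{\hat\alpha}))$. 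An application of \ref{char appr rep} to the action $\hat\alpha$ of $\hat G$ then translates this condition to approximate representability of $\hat\alpha$.

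For part \ref{dcc:2}, I would proceed dually and start from \ref{char appr rep}: $\beta$ is approximately representable if and only if $\iota_A : (A,\beta)\to (A\rtimes_\beta H, \ad(\lambda^\beta))$ is equivariantly sequentially split. Again by \ref{seq split duality}, this is equivalent to the dual morphism $\iota_A\rtimes H : (A\rtimes_\beta H, \hat\beta) \to (A\rtimes_\beta H \rtimes_{\ad(\lambda^\beta)} H, \widehat{\ad(\lambda^\beta)})$ being $\hat H$-equivariantly sequentially split. Using \ref{auxiliary iso}, this dual morphism is equivariantly isomorphic to the second-factor embedding $\eins\otimes\id_{A\rtimes_\beta H} : (A\rtimes_\beta H, \hat\beta) \to (\CC(\hat H)\otimes(A\rtimes_\beta H), \sigma\otimes\hat\beta)$. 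Applying \ref{seq split Rp} to the action $\hat\beta$ of the compact abelian group $\hat{H}$ then yields precisely the Rokhlin property for $\hat\beta$.

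The main obstacle, if any, is conceptual rather than technical: one needs to be confident that \ref{isomorphism proof Takai} and \ref{auxiliary iso} really do identify the dual morphisms appearing in \ref{seq split duality} with the correct canonical inclusions (up to equivariant isomorphism on both source and target), so that the Rokhlin/approximate-representability characterizations apply verbatim. Once the diagrams are matched and the $\hat{G}$- respectively $\hat{H}$-actions on the targets are recognized as $\ad(\lambda^{\hat\alpha})$ and $\sigma\otimes\hat\beta$, the two equivalences reduce to a direct application of \ref{seq split duality} followed by the relevant characterization. Note that the argument goes through in both directions because each characterization and the duality principle are stated as ``if and only if''.
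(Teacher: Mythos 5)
Your proposal is correct and follows essentially the same route as the paper's proof: both parts reduce to the duality principle \ref{seq split duality} applied to the characterizations \ref{seq split Rp} and \ref{char appr rep}, with \ref{isomorphism proof Takai} and \ref{auxiliary iso} supplying precisely the equivariant identifications of the dual morphisms with the canonical inclusion and the second-factor embedding, respectively. The only cosmetic difference is the direction in which you chain the equivalences, which is immaterial since every step is an ``if and only if''.
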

\begin{proof}
The Pontryagin dual group $\hat{G}$ of a second countable, compact, abelian group $G$ is in fact countable, discrete, abelian, and vice versa. So the above statements make sense.
 
Let $\alpha:G \curvearrowright A$ be a continuous action. By \ref{seq split duality}, we know that
\[
\eins\otimes \id_A: (A,\alpha)\into (\CC(G)\otimes A,\sigma \otimes \alpha)
\]
is $G$-equivariantly sequentially split if and only if the induced $*$-homomorphism between the crossed products
\[
(\eins\otimes \id_A)\rtimes G:(A\rtimes_\alpha G,\hat{\alpha})\into ((\CC(G)\otimes A)\rtimes_{\sigma\otimes \alpha} G,\widehat{\sigma\otimes \alpha})
\]
is $\hat{G}$-equivariantly sequentially split. By applying \ref{isomorphism proof Takai}, we obtain a commutative diagram
\[
\xymatrix{
 (A\rtimes_\alpha G,\hat{\alpha}) \ar[r]^/-0.5cm/{\iota_{A\rtimes_\alpha G}} \ar[d]_{(\eins \otimes \id_A)\rtimes G} & (A\rtimes_\alpha G\rtimes_{\hat{\alpha}} \hat{G},\ad(\lambda^{\hat{\alpha}})) \ar[dl]^\cong \\
 ((\CC(G)\otimes A)\rtimes_{\sigma\otimes \alpha}G,\widehat{\sigma\otimes\alpha})
}
\]
We conclude that $\eins\otimes \id_A$ is $G$-equivariantly sequentially split if and only if $\iota_{A\rtimes_\alpha G}$ is $\hat{G}$-equivariantly sequentially split. It now follows from \ref{seq split Rp} and \ref{char appr rep} that $\alpha$ has the Rokhlin property if and only if $\hat{\alpha}$ is approximately representable. This shows \ref{dcc:1}.

For \ref{dcc:2}, let $\beta:H\curvearrowright A$ be an action. Again by \ref{seq split duality},
\[
\iota_A:(A,\beta)\into (A\rtimes_\beta H, \ad(\lambda^\beta))
\]
is $H$-equivariantly sequentially split if and only if the induced $*$-homomorphism
\[
\iota_A\rtimes H:(A\rtimes_\beta H,\hat{\beta})\into (A\rtimes_\beta H \rtimes_{\ad(\lambda^\beta)} H,\widehat{\ad(\lambda^\beta)})
\]
is $\hat{H}$-equivariantly sequentially split. By \ref{auxiliary iso}, we obtain a commutative diagram
\[
\xymatrix{
 (A\rtimes_\beta H,\hat{\beta}) \ar[r]^/-0.5cm/{\iota_A \rtimes H} \ar[d]_{\eins \otimes \id_{A \rtimes_\beta H}} & (A \rtimes_\beta H \rtimes_{\ad(\lambda^\beta)} H, \widehat{\ad(\lambda^\beta)}) \ar[dl]^\cong \\
  (\CC(\hat{H}) \otimes (A \rtimes_\beta H), \sigma \otimes \hat{\beta})}
\]
We conclude that $\iota_A$ is $H$-equivariantly sequentially split if and only if $\eins \otimes \id_{A \rtimes_\beta H}$ is $\hat{H}$-equivariantly sequentially split. It now follows from \ref{seq split Rp} and \ref{char appr rep} that $\beta$ is approximately representable if and only if $\hat{\beta}$ has the Rokhlin property. This shows \ref{dcc:2} and and finishes the proof.
\end{proof}

%%%%%%%%%%%%%%%%%%%%%%%%%%%%%%

\subsection{Strongly self-absorbing actions}
\text{}

\noindent
Let us briefly recall from \cite{Szabo15ssa} the definition of a strongly self-absorbing action:

\begin{defi}
Let $\CD$ be a separable, unital \cstar-algebra and $G$ a second-countable, locally compact group. Let $\gamma: G\curvearrowright\CD$ be a continuous action. We say that $(\CD,\gamma)$ is a strongly self-absorbing \cstar-dynamical system, or that $\gamma$ is a strongly self-absorbing action, if the equivariant first-factor embedding
\[
\id_\CD\otimes\eins_\CD: (\CD,\gamma)\to (\CD\otimes\CD,\gamma\otimes\gamma)
\]
is approximately $G$-unitarily equivalent to an isomorphism $\mu$, that is, there exists a sequence $u_n \in \CU(\CD \otimes \CD)$ such that $\mu = \lim_{n \to \infty} \ad(u_n)\circ(\id_\CD\otimes\eins_\CD )$ and $\lim_{n \to \infty} \|(\gamma \otimes \gamma)_g (u_n) - u_n\| = 0$ uniformly on compact subsets of $G$.
\end{defi}

Combining the main result of \cite{Szabo15ssa} with \ref{eq unital homs FA}, we see that equivariant tensorial absorption of a strongly self-absorbing action can also be expressed in the language of sequentially split $*$-homomorphisms:

\begin{theorem}[cf.~{\cite[3.7]{Szabo15ssa}}]
\label{equi-McDuff}
Let $G$ be a second-countable, locally compact group. Let $A$ be a separable \cstar-algebra and $\alpha: G\curvearrowright A$ a continuous action. Let $\CD$ be a separable, unital \cstar-algebra and $\gamma: G\curvearrowright\CD$ a continuous action such that $(\CD,\gamma)$ is strongly self-absorbing. The following are equivalent:
\begin{enumerate}[label=\textup{(\roman*)},leftmargin=*] 
\item $(A,\alpha)$ is strongly cocycle conjugate to $(A\otimes\CD,\alpha\otimes\gamma)$. 
\item $(A,\alpha)$ is cocycle conjugate to $(A\otimes\CD,\alpha\otimes\gamma)$.
\item There exists a unital, equivariant $*$-homomorphism from $(\CD,\gamma)$ to $\big( F_{\infty,\alpha}(A), \tilde{\alpha}_\infty \big)$. 
\item The first-factor embedding $\id_A\otimes\eins_\CD: (A,\alpha)\to (A\otimes\CD,\alpha\otimes\gamma)$ is equivariantly sequentially split.
\end{enumerate}
\end{theorem}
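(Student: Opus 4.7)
The plan is to reduce this theorem to the combination of two previously established results and make no substantive new arguments. The equivalence of (i), (ii), and (iii) is precisely the content of the main equivariant McDuff-type theorem \cite[3.7]{Szabo15ssa}, which I would simply invoke. What remains is to establish (iii) $\Leftrightarrow$ (iv), and this is precisely what is recorded in Lemma \ref{eq unital homs FA}, the equivariant analogue of Lemma \ref{unital homs in FA}, which has already been stated above.

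For the ``only if'' direction (iii) $\Rightarrow$ (iv), I would proceed as in the non-equivariant proof of \ref{unital homs in FA}: given a unital equivariant $*$-homomorphism $\theta: (\CD,\gamma) \to (F_{\infty,\alpha}(A), \tilde{\alpha}_\infty)$, compose it with the equivariant multiplication $*$-homomorphism of Remark \ref{multiplication FA continuous} to obtain an equivariant $*$-homomorphism $\psi:(A \otimes_{\max} \CD, \alpha \otimes \gamma) \to (A_{\infty,\alpha}, \alpha_\infty)$ given on elementary tensors by $a \otimes d \mapsto a \cdot \theta(d)$. Since $\theta$ is unital, $\psi \circ (\id_A \otimes \eins_\CD)$ is the standard embedding, so $\psi$ witnesses that $\id_A \otimes \eins_\CD$ is equivariantly sequentially split.

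For the ``if'' direction (iv) $\Rightarrow$ (iii), given an equivariant approximate left-inverse $\psi: (A \otimes_{\max} \CD, \alpha \otimes \gamma) \to (A_{\infty,\alpha}, \alpha_\infty)$, non-degeneracy of the first-factor embedding forces $\psi$ to land in $D_{\infty,A}$, and $\psi$ becomes a non-degenerate equivariant $*$-homomorphism into $(D_{\infty,A}, \alpha_\infty)$. Extending strictly continuously to multipliers and restricting to $\eins \otimes \CD$ yields a unital equivariant $*$-homomorphism from $(\CD,\gamma)$ into $(\CM_\alpha(D_{\infty,A}) \cap A', \tilde{\alpha}_\infty)$. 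By Proposition \ref{surjection normalizer multiplier}\ref{snm:2} (applied in its evident equivariant form), the latter is canonically identified with $(F_{\infty,\alpha}(A), \tilde{\alpha}_\infty)$, delivering (iii).

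The main obstacle is of course hidden inside the cited result \cite[3.7]{Szabo15ssa}; the combination step I am outlining is essentially bookkeeping, once the equivariant multiplier/commutant identification from Section~1 is in place.
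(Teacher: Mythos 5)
Your proposal is correct and is exactly the route the paper takes: the equivalence of (i)--(iii) is quoted from \cite[3.7]{Szabo15ssa}, and (iii) $\Leftrightarrow$ (iv) is Lemma \ref{eq unital homs FA}, whose proof the paper leaves as a ``straightforward generalization'' of \ref{unital homs in FA} and which your sketch (multiplication map from \ref{multiplication FA continuous} for one direction, non-degeneracy plus the equivariant form of \ref{surjection normalizer multiplier}\ref{snm:2} for the other) reproduces faithfully.
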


With the help of some observations from the third section, we deduce the following interesting consequences with the help of the above perspective:

\begin{theorem}
Let $G$ be a second-countable, locally compact group. Let $A$ be a separable \cstar-algebra and $\alpha: G\curvearrowright A$ a continuous action. Let $\CD$ be a separable, unital \cstar-algebra and $\gamma: G\curvearrowright\CD$ a strongly self-absorbing action. Assume that $(A,\alpha)$ is (strongly) cocycle conjugate to $(A\otimes\CD, \alpha\otimes\gamma)$.
\begin{enumerate}[label=\textup{(\roman*)},leftmargin=*]
\item If $E\subset A$ is a hereditary and $\alpha$-invariant \cstar-subalgebra, then $(E,\alpha|_E)$ is (strongly) cocycle conjugate to $(E\otimes\CD, \alpha|_E\otimes\gamma)$.
\item If $\beta: G\curvearrowright B$ is another continuous action on a separable \cstar-algebra such that $(A,\alpha)$ and $(B,\beta)$ are equivariantly Morita equivalent, then $(B,\beta)$ is (strongly) cocycle conjugate to $(B\otimes\CD, \beta\otimes\gamma)$.
\end{enumerate}
\end{theorem}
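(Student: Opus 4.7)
The plan is to reduce both statements to Theorem \ref{equi-McDuff}, which characterizes tensorial absorption of $(\CD,\gamma)$ by the equivariant sequential splittedness of the first-factor embedding. Write $\phi_C=\id_C\otimes\eins_\CD$ for the first-factor embedding of any \cstar-algebra $C$ equipped with a $G$-action that is compatible with $\gamma$. The hypothesis provides that $\phi_A\colon (A,\alpha)\to (A\otimes\CD,\alpha\otimes\gamma)$ is equivariantly sequentially split, and the goal reduces to showing the same for $\phi_E$ and $\phi_B$, respectively. Since Theorem \ref{equi-McDuff} equates strong cocycle conjugacy, cocycle conjugacy, and equivariant sequential splittedness of the first-factor embedding, the two versions of the conclusion follow simultaneously.

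For (i), I would simply apply Proposition \ref{eq-seq-split-hered} to the sequentially split $*$-homomorphism $\phi_A$ and the hereditary, $\alpha$-invariant \cstar-subalgebra $E\subset A$. The key observation is the identification
\[
\overline{\phi_A(E)\cdot (A\otimes\CD)\cdot \phi_A(E)} = \overline{(E\otimes\eins_\CD)(A\otimes\CD)(E\otimes\eins_\CD)} = \overline{EAE}\otimes\CD = E\otimes\CD,
\]
where the last step uses that $E$ is hereditary in $A$. Under this identification, the restriction of $\phi_A$ to $E$ is precisely $\phi_E$, which is therefore equivariantly sequentially split by \ref{eq-seq-split-hered}, and Theorem \ref{equi-McDuff} delivers the desired cocycle conjugacy.

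For (ii), I would first invoke the equivariant Brown--Green--Rieffel stabilization theorem (due to Combes) for separable \cstar-algebras acted upon by a second countable, locally compact group: there exist a continuous action $\gamma_\CK\colon G\curvearrowright \CK$ on the compacts of some separable Hilbert space and an equivariant $*$-isomorphism $\Phi\colon (A\otimes\CK,\alpha\otimes\gamma_\CK)\stackrel{\cong}{\longrightarrow}(B\otimes\CK,\beta\otimes\gamma_\CK)$. Since $\phi_A$ is non-degenerate (because $\eins_\CD$ is a unit), Proposition \ref{seq split stability} lets me tensor with $(\CK,\gamma_\CK)$ and conclude that $\phi_A\otimes\id_\CK$ is equivariantly sequentially split. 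The equality $(\Phi\otimes\id_\CD)(x\otimes\eins_\CD)=\Phi(x)\otimes\eins_\CD$ shows that $\Phi$ and $\Phi\otimes\id_\CD$ intertwine the two first-factor embeddings $\phi_{A\otimes\CK}$ and $\phi_{B\otimes\CK}$, and hence this property transports to $\phi_{B\otimes\CK}$. A second application of Proposition \ref{seq split stability}, this time in the other direction to strip off $\CK$, yields that $\phi_B$ is equivariantly sequentially split, and Theorem \ref{equi-McDuff} closes the argument.

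The only ingredient beyond the machinery developed in this paper is the equivariant stabilization theorem invoked at the start of (ii); otherwise both claims amount to bookkeeping around Propositions \ref{eq-seq-split-hered} and \ref{seq split stability} and Theorem \ref{equi-McDuff}. The main point that needs care is verifying that the equivariant Morita equivalence isomorphism $\Phi$ genuinely intertwines the first-factor embeddings after tensoring on the right with $\id_\CD$, so that sequential splittedness may be transported back and forth.
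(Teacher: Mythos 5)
Your proposal is correct and follows essentially the same route as the paper, whose proof is precisely the one-line combination of Theorem \ref{equi-McDuff} with Propositions \ref{eq-seq-split-hered} and \ref{seq split stability}; you have merely spelled out the bookkeeping (the identification $\overline{\phi_A(E)(A\otimes\CD)\phi_A(E)}=E\otimes\CD$, and the intertwining of first-factor embeddings by $\Phi$ and $\Phi\otimes\id_\CD$) and made explicit the equivariant Brown--Green--Rieffel/Combes stabilization theorem that the paper leaves implicit in reducing Morita equivalence to the stable setting handled by \ref{seq split stability}.
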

\begin{proof}
This follows directly from \ref{equi-McDuff}, \ref{eq-seq-split-hered} and \ref{seq split stability}.
\end{proof}

\begin{reme}
One can also obtain the two theorems above for semi-strongly self-absorbing actions $\gamma$ (see \cite[Section 4]{Szabo15ssa}) with the identical argument.
\end{reme}

%%%%%%%%%%%%%%%%%%%%%%%%%%%%%%%%%%%%%%%%%%%%%%%%%%%%%%%%%%%%%%%%%%%%%%%%%%%%%%%%%%%%%%%%%%%%%%

\bibliographystyle{gabor}
\bibliography{sbarlak}

\begin{thebibliography}{10}
\providecommand{\url}[1]{\texttt{#1}}
\providecommand{\urlprefix}{URL }

\bibitem{AntoinePereraThiel2014}
R.~Antoine, F.~Perera, H.~Thiel: {Tensor products and regularity properties of
  Cuntz semigroups}.
\newblock Mem. Amer. Math. Soc., to appear  (2016).
\newblock \urlprefix\url{http://arxiv.org/abs/1410.0483v3}.

\bibitem{AraPereraToms2011}
P.~Ara, F.~Perera, A.~S. Toms: {{$K$}-theory for operator algebras.
  Classification of {$\mathrm C^*$}-algebras}.
\newblock {Aspects of operator algebras and applications}, \emph{Contemp.
  Math.}, vol. 534, pp. 1--71. Amer. Math. Soc. (2011).

\bibitem{BarlakOmlandStammeier16}
S.~Barlak, T.~Omland, N.~Stammeier: On the {$K$}-theory of \cstar-algebras
  arising from integral dynamics.
\newblock Ergodic Theory Dynam. Systems, to appear  (2016).
\newblock \urlprefix\url{http://arxiv.org/abs/1512.04496}.

\bibitem{BarlakSzabo2014}
S.~Barlak, G.~Szab\'{o}: {Rokhlin actions of finite groups on UHF-absorbing
  {$\mathrm{C}^*$}-algebras}.
\newblock Trans. Amer. Math. Soc.  (2016).
\newblock
  \urlprefix\url{http://dx.doi.org/10.1090/tran6697#sthash.D55oyGAD.dpuf}.

\bibitem{BarlakSzaboVoigt2016}
S.~Barlak, G.~Szab\'{o}, C.~Voigt: The spatial {R}okhlin property for actions
  of compact quantum groups.
\newblock J. Funct. Anal., to appear  (2016).
\newblock \urlprefix\url{https://arxiv.org/abs/1605.08600}.

\bibitem{Blackadar1998}
B.~Blackadar: {{$K$}-Theory for Operator Algebras}, \emph{Math. Sci. Res. Inst.
  Publ.}, vol.~5.
\newblock Second edition. Springer-Verlag, New York (1998).

\bibitem{BBSTWW}
J.~Bosa, N.~Brown, Y.~Sato, A.~Tikuisis, S.~White, W.~Winter: Covering
  dimension of {$\mathrm{C}^*$}-algebras and 2-coloured classification.
\newblock Mem. Amer. Math. Soc., to appear  (2016).
\newblock \urlprefix\url{http://arxiv.org/abs/1506.03974}.

\bibitem{Brown00}
L.~G. Brown: Continuity of actions of groups and semigroups on {B}anach spaces.
\newblock J. London Math. Soc. 62 (2000), no.~1, pp. 107--116.

\bibitem{ChoiEffros1976}
M.~D. Choi, E.~G. Effros: {The completely positive lifting problem for
  {$\mathrm{C}^*$}-algebras}.
\newblock Ann. of Math. 104 (1976), no.~3, pp. 585--609.

\bibitem{Dadarlat2003}
M.~Dadarlat: {Some remarks on the universal coefficient theorem in
  {$KK$}-theory}.
\newblock Operator algebras and mathematical physics ({C}onstanta, 2001)
  (2003), pp. 65--74.

\bibitem{Dadarlat2009}
M.~Dadarlat: {Fiberwise {$KK$}-equivalence of continuous fields of
  {$\mathrm{C}^*$}-algebras}.
\newblock J. K-Theory 3 (2009), no.~2, pp. 205--219.

\bibitem{EilersLoringPedersen1998}
S.~Eilers, G.~K. Pedersen, T.~A. Loring: {Stability of anticommutation
  relation. An application of noncommutative CW complexes}.
\newblock J. Reine Angew. Math. 499 (1998), pp. 101--143.

\bibitem{ElliottGongLinNiu2015}
G.~A. Elliott, G.~Gong, H.~Lin, Z.~Niu: On the classification of simple
  \cstar-algebras with finite decomposition rank, {II}  (2015).
\newblock \urlprefix\url{http://arxiv.org/abs/1507.03437}.

\bibitem{ElliottNiu2015}
G.~A. Elliott, Z.~Niu: On the classification of simple unital \cstar-algebras
  with finite decomposition rank.
\newblock In "Operator Algebras and their Applications:\ A Tribute to Richard
  V.\ Kadison", Contemporary Mathematics, Amer. Math. Soc., Providence, R. I.
  (2016).

\bibitem{ElliottToms2008}
G.~A. Elliott, A.~S. Toms: Regularity properties in the classification program
  for separable amenable {$\mathrm{C}^*$}-algebras.
\newblock Bull. Amer. Math. Soc. 45 (2008), no.~2, pp. 229--245.

\bibitem{Enders2014}
D.~Enders: {Subalgebras of finite codimension in semiprojective
  {$\mathrm{C}^*$}-algebras}  (2014).
\newblock \urlprefix\url{http://arxiv.org/abs/1405.2750v1}.

\bibitem{FarahHartSherman2014}
I.~Farah, B.~Hart, D.~Sherman: Model theory of operator algebras {II}: model
  theory.
\newblock Israel J. Math. 201 (2014), no.~1, pp. 477--505.

\bibitem{Gardella2014}
E.~Gardella: {Classification theorems for circle actions on Kirchberg algebras,
  I}  (2014).
\newblock \urlprefix\url{http://arxiv.org/abs/1405.2469}.

\bibitem{GardellaPHD}
E.~Gardella: Compact group actions on \cstar-algebras: classifcation,
  non-classifability, and crossed products.
\newblock Ph.D. thesis, University of Oregon (2015).

\bibitem{Gardella2014_R}
E.~Gardella: Crossed products by compact group actions with the {R}okhlin
  property.
\newblock J. Noncomm. Geom., to appear  (2016).
\newblock \urlprefix\url{http://arxiv.org/abs/1408.1946}.

\bibitem{GardellaLupini16}
E.~Gardella, M.~Lupini: Equivariant logic and applications to \cstar-dynamics
  (2016).
\newblock \urlprefix\url{http://arxiv.org/abs/1608.05532}.

\bibitem{GoldbringSinclair2015}
I.~Goldbring, T.~Sinclair: On {K}irchberg's embedding problem.
\newblock J. Funct. Anal. 269 (2015), no.~1, pp. 155--198.

\bibitem{GoldbringSinclair16}
I.~Goldbring, T.~Sinclair: Robinson forcing and the quasidiagonality problem
  (2016).
\newblock \urlprefix\url{http://arxiv.org/abs/1608.00682}.

\bibitem{GongLinNiu2015}
G.~Gong, H.~Lin, Z.~Niu: Classification of finite simple amenable
  {$\mathcal{Z}$}-stable {$\mathrm{C}^*$}-algebras  (2015).
\newblock \urlprefix\url{http://arxiv.org/abs/1501.00135v4}.

\bibitem{GuentnerHigsonTrout2000}
E.~Guentner, N.~Higson, J.~Trout: Equivariant {$E$}-theory for
  {$\mathrm{C}^*$}-algebras, vol. 148.
\newblock Mem. Amer. Math. Soc. (2000).

\bibitem{Haagerup2014}
U.~Haagerup: Quasitraces on exact {$\mathrm{C}^*$}-algebras are traces.
\newblock C. R. Math. Acad. Sci. Soc. R. Can. 36 (2014), no. 2-3, pp. 67--92.

\bibitem{HirshbergWinter2007}
I.~Hirshberg, W.~Winter: Rokhlin actions and self-absorbing
  {$\mathrm{C}^*$}-algebras.
\newblock Pacific J. Math. 233 (2007), no.~1, pp. 125--143.

\bibitem{HjelmborgRordam1998}
J.~Hjelmborg, M.~R{{\o}}rdam: {On stabilty of {$\mathrm{C}^*$}-algebras}.
\newblock J. Funct. Anal. 155 (1998), no.~1, pp. 153--171.

\bibitem{Izumi2004}
M.~Izumi: {Finite group actions on {$\mathrm{C}^{*}$}-algebras with the Rohlin
  property, I}.
\newblock Duke Math. J. 122 (2004), no.~2, pp. 233--280.

\bibitem{IzumiMatui2010}
M.~Izumi, H.~Matui: {{$\mathbb Z^{2}$}-actions on Kirchberg algebras}.
\newblock Adv. Math. 224 (2010), no.~2, pp. 355--400.
\newblock \urlprefix\url{http://dx.doi.org/10.1016/j.aim.2009.11.014}.

\bibitem{JiangSu1999}
X.~Jiang, H.~Su: On a simple unital projectionless {$\mathrm{C}^*$}-algebra.
\newblock Amer. J. Math. 121 (1999), no.~2, pp. 359--413.

\bibitem{Kirchberg}
E.~Kirchberg: {The classification of Purely Infinite {$\mathrm{C}^*$}-algebras
  using Kasparov's Theory}.
\newblock To appear in Fields Institute Communication series.

\bibitem{Kirchberg2000}
E.~Kirchberg: {Das nicht-kommutative Michael-Auswahlprinzip und die
  Klassifikation nicht-einfacher Algebren}.
\newblock J.~Cuntz, S.~Echterhoff, editors, {$\mathrm{C}^*$}-Algebras, pp.
  92--141. Springer Berlin Heidelberg (2000).

\bibitem{Kirchberg2004}
E.~Kirchberg: Central sequences in {$\mathrm{C}^*$}-algebras and strongly
  purely infinite algebras.
\newblock Operator Algebras: The Abel Symposium 2004, vol.~1, pp. 175--231.
  Springer Berlin (2006).

\bibitem{KirchbergPhillips2000}
E.~Kirchberg, N.~C. Phillips: {Embedding of exact $\mathrm{C}^*$-algebras in
  the Cuntz algebra {$\mathcal O_2$}}.
\newblock J. Reine Angew. Math. 525 (2000), pp. 17--53.

\bibitem{KirchbergWinter2004}
E.~Kirchberg, W.~Winter: Covering dimension and quasidiagonality.
\newblock Internat. J. Math. 15 (2004), no.~1, pp. 63--85.

\bibitem{LacaRaeburn1996}
M.~Laca, I.~Raeburn: {Semigroup crossed products and the Toeplitz algebras of
  nonabelian groups}.
\newblock J. Funct. Anal. 139 (1996), no.~2, pp. 415--440.

\bibitem{Lin2001}
H.~Lin: The tracial topological rank of {$\mathrm{C}^*$}-algebras.
\newblock Proc. London Math. Soc. 83 (2001), no.~1, pp. 199--234.

\bibitem{Lin2007}
H.~Lin: {Weak semiprojectivity in purely infinite simple
  {$\mathrm{C}^*$}-algebras}.
\newblock Canad. J. Math. 59 (2007), no.~2, pp. 343--371.

\bibitem{QLinPhillips1998}
Q.~Lin, N.~C. Phillips: Ordered {$K$}-theory for \cstar-algebras of minimal
  homeomorphisms.
\newblock Contemp. Math. 228 (1998), pp. 289--313.

\bibitem{Loring1997}
T.~Loring: {Lifting Solutions to Perturbing Problems in
  {$\mathrm{C}^*$}-algebras}.
\newblock Fields {I}nstitute {M}onographs 8 (1997).

\bibitem{Nawata16}
N.~Nawata: Finite group actions on certain stably projectionless
  \cstar-algebras with the {R}ohlin property.
\newblock Trans. Amer. Math. Soc. 368 (2016), pp. 471--493.

\bibitem{OsakaKodakaTeruya2009}
H.~Osaka, K.~Kodaka, K.~Teruya: {The Rohlin property for inclusions of
  {$\mathrm{C}^*$}-algebras with a finite Watatani index}.
\newblock {Operator Structures and Dynamical Systems}, \emph{Contemp. Math.},
  vol. 503, pp. 177--195. Amer. Math. Soc., Providence, RI (2009).

\bibitem{OsakaPhillips2012}
H.~Osaka, N.~C. Phillips: Crossed products by finite group actions with the
  {R}okhlin property.
\newblock Math. Z. 270 (2012), no. 1-2, pp. 19--42.

\bibitem{OsakaTeruya2011}
H.~Osaka, T.~Teruya: {Nuclear dimension for an inclusion of unital
  {$\mathrm{C}^*$}-algebras}  (2011).
\newblock \urlprefix\url{http://arxiv.org/abs/1111.1808}.

\bibitem{OsakaTeruya2014}
H.~Osaka, T.~Teruya: {Strongly self-absorbing property for inclusions of
  {$\mathrm{C}^*$}-algebras with a finite Watatani index}.
\newblock Trans. Amer. Math. Soc. 366 (2014), no.~3, pp. 1685--1702.

\bibitem{Phillips2014}
N.~C. Phillips: Large subalgebras  (2014).
\newblock \urlprefix\url{http://arxiv.org/abs/1408.5546}.

\bibitem{Putnam1989}
I.~F. Putnam: The {$\mathrm{C}^*$}-algebras associated with minimal
  homeomorphisms of the {C}antor set.
\newblock Pacific J. Math. 136 (1989), no.~2, pp. 329--353.

\bibitem{Raeburn1988}
I.~Raeburn: {On crossed products and Takai duality}.
\newblock Proc. Edinburgh Math. Soc. 31 (1988), no.~2, pp. 321--330.

\bibitem{Robert2013}
L.~Robert: {Remarks on {$\mathcal Z$}-stable projectionless
  {$\mathrm{C}^*$}-algebras}.
\newblock Glasgow Math. J. 58 (2016), no.~2, pp. 273--277.
\newblock \urlprefix\url{http://dx.doi.org/10.1017/S0017089515000117}.

\bibitem{Rordam2004}
M.~R{{\o}}rdam: {The stable and the real rank of {$\mathcal Z$}-absorbing
  {$\mathrm C^*$}-algebras}.
\newblock Internat. J. Math. 15 (2004), no.~10, pp. 1065 -- 1084.

\bibitem{Rordam2001}
M.~R{{\o}}rdam, E.~Stormer: {Classification of Nuclear
  {$\mathrm{C}^*$}-Algebras. Entropy in Operator Algebras}.
\newblock Encyclopaedia of {M}athematical {S}ciences. Springer (2001).

\bibitem{RosenbergSchochet1986}
J.~Rosenberg, C.~Schochet: {The K{\"u}nneth theorem and the universal
  coefficient theorem for equivariant {$K$}-theory and {$KK$}-theory}.
\newblock Mem. Amer. Math. Soc. 62 (1986), no. 348, pp. 415--458.

\bibitem{RosenbergSchochet1987}
J.~Rosenberg, C.~Schochet: {The K{\"u}nneth theorem and the universal
  coefficient theorem for Kasparov's generalized {$K$}-functor}.
\newblock Duke Math. J. 55 (1987), no.~2, pp. 431--474.

\bibitem{Santiago2014}
L.~Santiago: Crossed products by actions of finite groups with the {R}okhlin
  property.
\newblock Int. J. Math. 26 (2015), no.~7.

\bibitem{Schochet1984}
C.~Schochet: {Topological methods for {$\mathrm{C}^*$}-algebras IV: mod p
  homology}.
\newblock Pacific J. Math. 114 (1984), no.~2, pp. 447--468.

\bibitem{Spielberg2007}
J.~Spielberg: {Weak semiprojectivity for purely infinite
  {$\mathrm{C}^*$}-algebras}.
\newblock Canad. Math. Bull. 50 (2007), no.~3, pp. 460--468.

\bibitem{Szabo2015}
G.~Szab\'{o}: {A short note on the continuous Rokhlin property and the
  universal coefficient theorem in {$E$}-theory}.
\newblock Canad. Math. Bull. 58 (2015), no.~2, pp. 374--380.

\bibitem{Szabo15ssa}
G.~Szab\'{o}: Strongly self-absorbing {$\mathrm{C}^*$}-dynamical systems.
\newblock Trans. Amer. Math. Soc., to appear  (2016).
\newblock \urlprefix\url{http://arxiv.org/abs/1509.08380v1}.

\bibitem{Takai1975}
H.~Takai: {On a duality for crossed products by {$\mathrm{C}^*$}-algebras}.
\newblock J. Funct. Anal. 19 (1975), pp. 25--39.

\bibitem{Thiel2011}
H.~Thiel: Inductive limits of projective \cstar-algebras  (2011).
\newblock \urlprefix\url{http://arxiv.org/abs/1105.1979}.

\bibitem{TikuisisWhiteWinter2015}
A.~Tikuisis, S.~White, W.~Winter: Quasidiagonality of nuclear \cstar-algebras.
\newblock Ann. of Math., to appear  (2016).
\newblock \urlprefix\url{http://arxiv.org/abs/1509.08318}.

\bibitem{TikuisisWinter2014}
A.~Tikuisis, W.~Winter: Decomposition rank of {$\mathcal{Z}$}-stable
  \cstar-algebras.
\newblock Analysis \& PDE 7 (2014), no.~3, pp. 673--700.

\bibitem{TomsWinter2007}
A.~S. Toms, W.~Winter: {Strongly self-absorbing {$\mathrm{C}^*$}-Algebras}.
\newblock Trans. Amer. Math. Soc. 359 (2007), no.~8, pp. 3999--4029.

\bibitem{Watatani1990}
Y.~Watatani: {Index for {$\mathrm{C}^*$}-subalgebras}, \emph{Mem. Amer. Math.
  Soc.}, vol.~83.
\newblock American Mathematical Society (1990).

\bibitem{Williams2007}
D.~P. Williams: {Crossed Products of {$\mathrm{C}^*$}-Algebras}.
\newblock Math. Surveys Monogr. Amer. Math. Soc. (2007).

\bibitem{Winter2010}
W.~Winter: Decomposition rank and {$\mathcal{Z}$}-stability.
\newblock Invent. Math. 179 (2010), no.~2, pp. 229--301.

\bibitem{WinterZacharias2010}
W.~Winter, J.~Zacharias: {The nuclear dimension of {$\mathrm{C}^*$}-algebras}.
\newblock Adv. Math. 224 (2010), no.~2, pp. 461--498.

\end{thebibliography}

\end{document}